\tikzset{myarrow/.style={ decoration={bent,aspect=0.3, markings,mark=at
  position 0.5 with {\arrow[scale=1.2]{latex'}}}, postaction=decorate}}
\tikzset{myarrowshort/.style={ decoration={bent,aspect=0.3, markings,mark=at
  position 0.3 with {\arrow[scale=1.2]{latex'}}}, postaction=decorate}}
\tikzset{myarrowshorter/.style={ decoration={bent,aspect=0.3, markings,mark=at
  position 0.2 with {\arrow[scale=1.2]{latex'}}}, postaction=decorate}}
\tikzset{->-/.style={decoration={markings, mark=at position #1 with
  {\arrow{>}}},postaction={decorate}}}
\tikzset{my_dot/.style={fill, circle, inner sep=0pt,minimum size=1.5pt}}
\tikzset{my_node/.style={fill, circle, inner sep=0pt,minimum size=3pt}}
\tikzset{inv/.style={fill, circle, inner sep=0pt,minimum size=0pt}}
\newtheorem{thm}{Theorem}[section]
\newtheorem{lemma}[thm]{Lemma}
\newtheorem{prop}[thm]{Proposition}
\newtheorem{claim}[thm]{Claim}
\newtheorem{defn}[thm]{Definition}
\newtheorem{Definition}[thm]{Definition}
\newenvironment{definition}
  {\begin{Definition}\rm}{\end{Definition}}
\newtheorem{Example}[thm]{Example}
\newenvironment{example}
  {\begin{Example}\rm}{\end{Example}}
\theoremstyle{remark}
\newtheorem{remark}[thm]{Remark}
\newtheorem{obs}[thm]{Observation}
\newcommand \M {\mathcal{M}}
\newcommand \mgn {M_{g,n}^{\mathrm{trop}}}
\newcommand \mgno {M_{g,n}^{\mathrm{trop}}[1]}
\newcommand \mbargn {\overline{M}_{g,n}^{\mathrm{trop}}}
\newcommand \mtno {M_{2,n}^{\mathrm{trop}}[1]}
\newcommand \dtn {\Delta_{2,n}}
\newcommand \bgn {M_{g,n}^{\mathrm{br}}}  
\newcommand \bgno {M_{g,n}^{\mathrm{br}}[1]}  
\newcommand \btno {M_{2,n}^{\mathrm{br}}[1]}
\newcommand \rgno {M^{\mathrm{rep}}_{g,n}[1]}
\newcommand \rtno {M^{\mathrm{rep}}_{2,n}[1]}
\newcommand \Tr {T^{\mathrm{rep}}}
\newcommand \Tnr {T^{\mathrm{nrep}}}
\newcommand \Tbr {T^{\mathrm{br}}}
\newcommand \TTh {T^{\Theta}}
\newcommand \Tcyc {T^{\mathrm{cyc}}}
\newcommand \Tfull {T^{\mathrm{full}}}
\newcommand \Tsig {T^\sigma}
\newcommand \DTT {{DT}^{\Theta}}
\newcommand \G {\mathbf{G}}  
\newcommand \D {\Delta}  
\newcommand \pt {\bullet_{\mathrm{br}}}  
\newcommand \zcell {\epsilon_{\mathrm{br}}}  
\newcommand \Ht {\widetilde{H}} 
\newcommand \Csig {C^\sigma} 
\newcommand{\Gr}{\operatorname{Gr}}
\newcommand{\mcX}{\mathcal{X}}
\newcommand \RR {\mathbb{R}}
\newcommand \ZZ {\mathbb{Z}}
\newcommand \ra {\rightarrow}
\newcommand \lra {\longrightarrow}
\newcommand \tr \textrm
\newcommand \eps \epsilon
\newcommand \sig \sigma
\newcommand{\scr}{\scriptstyle}
\newcommand{\diag}{\operatorname{diag}}
\newcommand{\SNF}{\operatorname{SNF}}
\newcommand{\val}{\operatorname{val}}
\newcommand{\sgn}{\operatorname{sgn}}
\newcommand{\Aut}{\operatorname{Aut}}
\newcommand{\rank}{\operatorname{rank}}
\newcommand{\im}{\operatorname{im}}
\newcommand \Id {\boldsymbol 1} 
\newcommand \ov {\overline}
\newcommand \CC {\mathbb{C}}
\newcommand \QQ {\mathbb{Q}}
\newcommand \del {\partial}
\newcommand \col {\colon}
\newcommand {\set}[1]{\{1,\ldots,#1\}}
\newcommand {\cyc}[2]{\tbinom{#1}{#2}}
\newcommand {\cycl}[2]{\text{\bf -}\!\tbinom{#1}{#2}}
\newcommand {\cycr}[2]{\tbinom{#1}{#2}\!\text{\bf -}}
\newcommand {\cyclr}[2]{\text{\bf -}\!\tbinom{#1}{#2}\!\text{\bf -}}
\title{Topology of the tropical moduli spaces $M_{2,n}$}
\author{Melody Chan}\address{Department of Mathematics, Harvard University, Cambridge, MA 02138}\email{mtchan@math.harvard.edu}
\begin{document}

\begin{abstract} 
We study the topology of the link $\mgno$ of the tropical moduli spaces of curves when $g=2$.  Tropical moduli spaces can be identified with boundary complexes for $\mathcal{M}_{g,n}$, as shown by Abramovich-Caporaso-Payne, so their reduced rational homology encodes top-weight rational cohomology of the complex moduli spaces $\mathcal{M}_{g,n}$.  We prove that $\mtno$ is an $n$-connected topological space whose reduced integral homology is supported in the top two degrees only.  
We compute the reduced Euler characteristic of $\mtno$ for all $n$, and we compute the rational homology of $\mtno$ when $n \le 8$, determining completely the top-weight $\QQ$-cohomology of $\mathcal{M}_{2,n}$ in that range.
\end{abstract}

\maketitle

\section{Introduction}

The tropical moduli spaces of curves $\mgn$ are topological spaces 
that parametrize isomorphism classes of $n$-marked stable abstract tropical curves of genus $g$.  These spaces were introduced in tropical geometry by Brannetti-Melo-Viviani \cite{BMV} and Caporaso \cite{cap13}, building on earlier work of Mikhalkin \cite[\S 5.4]{mik06}.  They have played an important role in many recent advances in tropical geometry; see for example 
\cite{acp, BMV, cap12, cap13, cap14, cha1, cha2, len, viviani}.  
Moreover, tropical moduli spaces have already arisen very naturally in other kinds of geometry.  For example, $M^{\mathrm{trop}}_{0,n}$ is the Billera-Holmes-Vogtmann {\em space of phylogenetic trees} \cite{bhv},
while $M^{\mathrm{trop}}_{g,0}$ is an infinite cone over a compactified quotient of Culler-Vogtmann Outer Space $X_g$ of rank $g$, with the property that the Torelli map on $M^{\mathrm{trop}}_{g,0}$ is compatible with a period map on $X_g$ \cite{owenbaker, cmv}.

While $\mgn$ itself is contractible, since it is a generalized cone complex, the {\em link} of $\mgn$, which we will denote $\mgno$, has interesting topology.  The link can be regarded as the cross-section of $\mgn$ that parametrizes tropical curves of total edge length $1$.  
The work of Abramovich-Caporaso-Payne \cite{acp}, extending the work of Thuillier \cite{thuillier}, identifies $\mgno$ as a {boundary complex} of $\M_{g,n}$.  In fact, their work applies more generally and it identifies the compactification $\mbargn$ canonically with the skeleton of the analytification $\ov{M}_{g,n}^ {\mathrm{an}}$, over any algebraically closed, trivially valued field.  

Boundary complexes, first studied 
by Danilov \cite{danilov}, encode the intersections of the components of a normal crossings compactification.  Furthermore, over $\CC$, they encode the top-weight rational cohomology of the space being compactified.  The work of Deligne \cite{deligne3} shows:
\begin{thm}\label{t:topwtid}
For any smooth, separated Deligne-Mumford stack $\mcX$ of dimension $d$ with a normal crossings compactification $\overline \mcX$ and boundary complex $\Delta(\overline \mcX)$, 
\vspace{-.1cm}
\[
 \Gr_{2d}^W H^{2d-i}(\mcX; \QQ) \  \cong  \ \widetilde H_{i-1}(\Delta(\overline \mcX); \QQ).
\]
\end{thm}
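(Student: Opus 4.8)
\medskip

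\noindent\emph{Proof plan.} The plan is to deduce the statement from Deligne's weight spectral sequence for the open immersion $j\col \mcX \hookrightarrow \ov\mcX$. Write $D = \ov\mcX \setminus \mcX$ for the normal crossings boundary, and for each $p \ge 0$ let $D^{(p)}$ be the normalization of the locus where $p$ local branches of $D$ meet; then $D^{(0)} = \ov\mcX$, each $D^{(p)}$ is a smooth, proper Deligne--Mumford stack of dimension $d-p$, and -- essentially by the definition of the boundary complex -- the connected components of $D^{(p)}$ are in bijection with the $(p-1)$-dimensional simplices of $\D(\ov\mcX)$, compatibly with the face maps (the empty simplex corresponding to $D^{(0)}$). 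With $\QQ$-coefficients the mixed Hodge machinery of \cite{deligne3} applies to smooth separated DM stacks without change -- one may, for instance, pass to a smooth proper hypercover by schemes and take invariants -- producing a spectral sequence
\[
E_1^{-p,q} \;=\; H^{q-2p}\!\bigl(D^{(p)};\QQ\bigr)(-p) \;\Longrightarrow\; H^{q-p}(\mcX;\QQ)
\]
that degenerates at $E_2$, with $E_\infty^{-p,q} = \Gr^W_q H^{q-p}(\mcX;\QQ)$ and with $d_1$ an alternating sum of Gysin maps along the codimension-one inclusions $D^{(p)} \hookrightarrow D^{(p-1)}$.

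Next I would restrict to the top weight $q = 2d$. Since $D^{(p)}$ is smooth and proper of dimension $d-p$, the term $E_1^{-p,2d} = H^{2(d-p)}\!\bigl(D^{(p)};\QQ\bigr)(-p)$ is its top cohomology, which is pure of weight $2d$ and has a canonical $\QQ$-basis given by the fundamental classes of the connected components of $D^{(p)}$; by the previous paragraph this basis is precisely the set of $(p-1)$-simplices of $\D(\ov\mcX)$. So the top-weight row $\bigl(E_1^{-\bullet,2d},d_1\bigr)$ is identified, degree by degree, with the augmented simplicial chain complex of $\D(\ov\mcX)$ over $\QQ$, the slot $p=0$ being the augmentation module -- \emph{provided} $d_1$ matches the simplicial boundary $\del\col C_{p-1}\ra C_{p-2}$. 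Checking this is the delicate point: a Gysin pushforward along a codimension-one closed immersion $Y\hookrightarrow Z$ of smooth proper stacks sends the fundamental class of a component of $Y$ to that of the component of $Z$ containing it, so $d_1$ records exactly which $(p-1)$-stratum is a facet of which $(p-2)$-stratum; one then orients the simplices of $\D(\ov\mcX)$ so that the signs built into $d_1$ become the orientation signs of $\del$, keeping track of the branch data encoded by the symmetric $\D$-complex structure (genuinely needed, since the boundary divisors of $\ov\mcX$ may be reducible or self-intersecting).

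Granting that identification, the cohomology of $\bigl(E_1^{-\bullet,2d},d_1\bigr)$ at the $p$-th spot is $\Ht_{p-1}(\D(\ov\mcX);\QQ)$, so $E_2^{-p,2d} = \Ht_{p-1}(\D(\ov\mcX);\QQ)$; degeneration at $E_2$ then gives $\Ht_{p-1}(\D(\ov\mcX);\QQ) = E_\infty^{-p,2d} = \Gr^W_{2d}H^{2d-p}(\mcX;\QQ)$, and setting $i = p$ yields the theorem. I expect the only genuine obstacle to be the bookkeeping of the second paragraph -- identifying the normalized strata $D^{(p)}$ with the branches recorded by $\D(\ov\mcX)$ and verifying that the Gysin differential is the simplicial boundary map on the nose, signs included; transferring the weight spectral sequence to DM stacks over $\QQ$ is routine.
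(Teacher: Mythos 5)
Your argument is correct and is essentially the one the paper relies on: the paper gives no proof of this theorem, instead citing Deligne's weight spectral sequence and its extension to Deligne--Mumford stacks in the appendix to \cite{cgp}, where the top-weight row of that spectral sequence is identified with the augmented rational chain complex of the boundary complex exactly as you describe. The one point you rightly flag as delicate --- that for a non-simple normal crossings boundary the components of $D^{(p)}$ match $(p-1)$-cells only after accounting for the symmetric $\Delta$-complex structure (i.e.\ passing to coinvariants, which is harmless over $\QQ$) --- is precisely the content deferred to that appendix.
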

\noindent To be more precise, Deligne's original work is not phrased in the language of stacks, but it extends naturally to that setting.  The details of this formulation can be found in the appendix to \cite{cgp}. In any case the top-weight cohomology of $\M_{g,n}$ is purely combinatorial and is encoded in $\mgno$.

The cases $g=0$ and $g=1$ have been completely settled.  When $g=0$ and $n\ge 3$, the space $M^{\mathrm{trop}}_{0,n}[1]$ has the homotopy type of a wedge of $(n-2)!$ spheres of dimension $n-4$ \cite{bhv}.
The case $g=1$ is settled in \cite{cgp}, where we prove that $M^{\mathrm{trop}}_{1,n}[1]$ is a wedge of $(n-1)!/2$ spheres of dimension $n-1$.  

Here we study the topology of $\mtno$.  Our main theorems are Theorems~\ref{t:1},~\ref{t:2}, and~\ref{t:3} below.
We also prove some intermediate structural results on natural subcomplexes or quotient complexes of $\mtno$:
\begin{itemize}
\item The {\em bridge locus} (Definition~\ref{d:br}) parametrizing tropical curves with bridges, and tropical curves obtained as specializations thereof, is contractible in genus 2 (Theorem~\ref{t:bridge}).
\item For $n \ge 4$, the {\em cyclic theta complex} $C_{2,n}$ (Definition~\ref{d:cyclic}) is $\QQ$-homologous to a point.  Its full $\ZZ$-homology is given in Theorem~\ref{t:cyclic} by
$$\Ht_i(C_{2,n},\ZZ)=\begin{cases}
({\ZZ}/{2\ZZ})^{\frac{(n-1)!}{2}} &\text{if } i = n+1, \\
0 &\text{else.}
\end{cases}$$

\end{itemize}
\noindent These results will imply our first main theorem:
\begin{thm}\label{t:1}
We have
$$\Ht_i(\mtno; \ZZ) = 0$$ 
for $i = 0,\ldots,n$.
Thus, the reduced $\ZZ$-homology of $\mtno$ is concentrated in the top two degrees, for all $n$. Furthermore, $\mtno$ is an $n$-connected space.
\end{thm}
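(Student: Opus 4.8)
The plan is to collapse $\mtno$ up to homotopy equivalence until only its top few cells remain, at which point Theorem~\ref{t:1} reduces to the vanishing of a single homology group controlled by Theorem~\ref{t:cyclic}; the connectivity statement then follows by the Hurewicz theorem. The engine throughout is the standard fact for symmetric $\Delta$-complexes (see \cite{cgp}): collapsing a contractible subcomplex is a homotopy equivalence, and the homology of the collapsed space fits into the long exact sequences of the resulting pairs. First I would collapse the bridge locus of $\mtno$, which is contractible by Theorem~\ref{t:bridge}. Since this locus is closed under specialization it already absorbs the curves with loops, with positive vertex weights, and (many of) those with repeated marked points, and the key bookkeeping point is that the curves which \emph{survive} the collapse are exactly the subdivisions of the theta graph $\Theta$ in which every vertex carries at most one marked point: a vertex supporting two or more marked points can be ``exploded'' --- pulled apart so that those points sit on a new leaf attached by a bridge --- which places the curve in the bridge locus, while conversely no vertex of a subdivided theta can be separated off across a bridge. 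Such a curve with $j$ subdivision vertices has $j+3$ edges, hence a cell of dimension $j+2$, and since the two trivalent hubs of $\Theta$ carry at most one marked point each, $j\in\{n-2,n-1,n\}$ once $n\ge2$. Therefore $\mtno$ is homotopy equivalent to a complex $Q$ whose cells, apart from a single $0$-cell, lie in dimensions $n$, $n+1$, $n+2$ (for $n\ge2$; the cases $n\le1$ are then immediate). In particular $\mtno$ is $(n-1)$-connected and its reduced homology is concentrated in degrees $n$, $n+1$, and $n+2$.

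It then suffices to prove $\Ht_n(\mtno;\ZZ)=\Ht_n(Q;\ZZ)=0$. Because $Q$ has no cells in dimensions $1,\dots,n-1$, this is the same as surjectivity over $\ZZ$ of the cellular boundary $C_{n+1}(Q)\to C_n(Q)$, and this is what the cyclic theta complex is set up to supply. The $n$-cells of $Q$ are the decorated thetas with $j=n-2$ subdivision vertices, equivalently those in which \emph{both} hub vertices are marked; I would identify the subcomplex of $Q$ spanned by the cells of dimension at most $n+1$ with the cyclic theta complex $C_{2,n}$ (or a close relative of it), so that Theorem~\ref{t:cyclic}, which gives $\Ht_i(C_{2,n};\ZZ)=0$ for all $i\le n$, kills the only class that could survive in degree $n$. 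More robustly, one filters $Q$ by how many of the three branches of $\Theta$ are subdivided and runs Mayer--Vietoris: beyond the layer recovering $C_{2,n}$, the overlap pieces are joins and suspensions of the complexes $M^{\mathrm{trop}}_{1,k}[1]$ and $M^{\mathrm{trop}}_{0,k}[1]$ --- wedges of spheres of dimension $k-1$ and $k-4$ by \cite{cgp} and \cite{bhv}, hence highly connected --- so the long exact sequences force all reduced homology up into degrees $n+1$ and $n+2$. This establishes the homology assertion of Theorem~\ref{t:1}, and concentration in the top two degrees follows because $\dim\mtno=n+2$.

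For the connectivity statement, $\mtno$ is $(n-1)$-connected simply because $Q$ has no cells in dimensions $1,\dots,n-1$, so the Hurewicz theorem gives $\pi_n(\mtno)\cong\Ht_n(\mtno;\ZZ)$, which has just been shown to vanish; hence $\mtno$ is $n$-connected. (When $n=2$ or $n=3$, where Theorem~\ref{t:cyclic} does not literally apply, the vanishing of $\Ht_n(Q;\ZZ)$ must instead be obtained by a direct computation, in line with the explicit low-$n$ calculations made elsewhere in the paper.)

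The step I expect to be the main obstacle is the identification in the second paragraph. This is genuinely a symmetric-$\Delta$-complex computation rather than a simplicial one: the automorphism group $\Aut(\Theta)=S_3\times\ZZ/2$ glues the theta cells to themselves, and it is precisely the orientation-reversing automorphisms of these cells that manufacture the $\ZZ/2$-torsion recorded in Theorem~\ref{t:cyclic}. The real work is to set up the filtration and track the cellular differentials carefully enough to confirm that this torsion --- and indeed all of the reduced homology --- is carried up into degrees $n+1$ and $n+2$, leaving nothing behind in degree $n$.
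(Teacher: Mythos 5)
Your overall strategy (collapse the contractible bridge locus, then use the cyclic theta complex to kill the low-degree homology of the quotient) is the paper's strategy, but there are two concrete gaps in the middle of your argument, both traceable to the same oversight: you have not accounted for the cell structure forced by the automorphisms of the surviving theta types. A subdivided theta in which all $n$ markings lie on one edge (or on one edge plus the hubs) has an automorphism swapping the two remaining parallel edges, so the image of its simplex in $\dtn=\mtno/\btno$ is not an open simplex of dimension $j+2$; one must pass to the decorated cells $\D(\G,\delta)$, and the decoration $e=_\delta e'$ produces a cell of dimension one \emph{lower}. In particular the types written $\cyclr{n-2}{=}$ in the paper give genuine $(n-1)$-cells of the quotient. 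So your claim that $Q$ has no cells in dimensions $1,\dots,n-1$, and hence that $(n-1)$-connectivity and vanishing of $\Ht_{n-1}$ come for free from the cell dimensions, is false: you must also prove $\Ht_{n-1}(\dtn;\ZZ)=0$, which in the paper comes out of the same Smith-normal-form computation that proves Theorem~\ref{t:cyclic} (the $(n-1)$-cells all lie in $C_{2,n}$, and $\del_n$ is shown to be surjective there).

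The second gap is the identification you flag as the main obstacle: the subcomplex of $\dtn$ spanned by cells of dimension $\le n+1$ is \emph{not} $C_{2,n}$, nor a close relative. The $n$- and $(n+1)$-cells include the full theta types of forms $(1,1,k_1,k_2,k_3)$ and $(1,0,k_1,k_2,k_3)$ with all $k_i\ge 1$ (both or one hub marked, all three branches subdivided), and these are not cyclic. Theorem~\ref{t:cyclic} therefore does not by itself show that $\del_{n+1}$ surjects onto the $n$-chains. The paper handles the remaining full-theta $n$-cells by a separate explicit induction (Proposition~\ref{p:f}): each cell of form $(1,1,k_1,k_2,k_3)$ is exhibited, by downward induction on $k_1$, as $\pm\del_{n+1}$ of a cell of form $(1,0,k_1+1,k_2,k_3)$ modulo cells already known to be boundaries, using the fact that sliding a marking onto a hub so as to produce a cyclic type contributes zero to the boundary. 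Your proposed Mayer--Vietoris filtration by the number of subdivided branches is plausible in outline but is exactly where this work lives, and as written it is not carried out; the suspension/join pieces you invoke would still have to be matched against the decorated cell structure, including the $(n-1)$-cells above. The Hurewicz step at the end is fine once the homology vanishing is actually established (the quotient is simply connected because it has one $0$-cell and no $1$-cells).
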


Note that a weaker version of Theorem~\ref{t:1}, for rational instead of integral homology, can be deduced from the fact that the affine stratification number (see~\cite{rv04}) of $M_{2,1}$ is 1.  In fact, a result of Fontanari-Looijenga \cite{fl08} verifies, for $g$ up to $5$, Looijenga's conjecture that the affine stratification number of $M_{g,1}$ is $g-1$.  From Theorem~\ref{t:topwtid} and this fact, it can then be deduced that the reduced rational homology of $\mgno$ is supported in top $g$ degrees whenever Looijenga's conjecture holds.  See~\cite{cgp} for more details.  Note also that the statement on the connectivity of $\mtno$ strengthens, in the genus 2 case, the statement in \cite{cgp} that $\mgno$ is $(n-3)$-connected.

We also prove:

\begin{thm}\label{t:2}
 For all $n\ge 4$, the reduced Euler characteristic of $\mtno$ is
	$$\widetilde{\chi}(\mtno) = (-1)^n\cdot n!/12.$$
\end{thm}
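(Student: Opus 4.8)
We evaluate $\widetilde\chi(\mtno)$ directly from the cellular chain complex: Theorem~\ref{t:1} only tells us that the reduced homology of $\mtno$ lives in degrees $n+1$ and $n+2$, without pinning down the two Betti numbers for general $n$, so it does not by itself yield $\widetilde\chi$. Recall that $\mtno$ is a symmetric $\Delta$-complex with one cell $\sigma_G$ of dimension $|E(G)|-1$ for each isomorphism class of genus-$2$, $n$-marked stable weighted graph $G$ with at least one bounded edge, where $\sigma_G=\Delta^{|E(G)|-1}/\Aut(G)$ and $\Aut(G)$ acts through the permutation representation on $E(G)$. Over $\QQ$ the cellular chain group in degree $p$ is $\bigoplus_{|E(G)|=p+1}(\QQ^{\mathrm{or}})_{\Aut(G)}$, whose dimension equals the number of such $G$ for which the homomorphism $\Aut(G)\to S_{E(G)}$ has image inside the alternating group $A_{E(G)}$ — precisely the condition that the orientation line of $\sigma_G$ survive to the coinvariants. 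Hence
\[
 \widetilde\chi(\mtno)\;=\;\sum_{[G]}(-1)^{|E(G)|-1}\,\bigl[\,\Aut(G)\to S_{E(G)}\text{ has image in }A_{E(G)}\,\bigr],
\]
the sum running over isomorphism classes of genus-$2$, $n$-marked stable weighted graphs with at least one edge. The whole proof is the evaluation of this finite signed sum; no homology groups need be computed.

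I would stratify the sum by the isomorphism type of the \emph{genus-carrying core} of $G$ — the weighted graph obtained from $G$ by deleting all legs and then repeatedly pruning leaves and smoothing weight-$0$ bivalent vertices. In genus $2$ there are only finitely many cores (a single weight-$2$ vertex; a weight-$1$ vertex with a loop; two weight-$1$ vertices joined by an edge; the theta graph; the dumbbell; the figure-eight; and a handful of mixed types carrying a weight-$1$ vertex), and every $G$ is built from some core by subdividing core edges, attaching leg-decorated rooted trees at vertices and at subdivision points, and distributing the $n$ markings. For a fixed core, the sum of $(-1)^{|E(G)|-1}$ over all such decorations is a generating-function computation in a variable $x$ marking legs: leg-decorated rooted trees contribute a closed-form exponential generating series, a core edge subdivided into a chain with legs inserted contributes a rational (geometric-type) factor, the contributions from the several core edges and vertices multiply, and one then passes to $\Aut(G)$-orbits and imposes the orientation constraint. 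The structural results of the paper do genuine work here: the bridge locus is contractible (Theorem~\ref{t:bridge}), hence has reduced Euler characteristic $0$, so by additivity of $\widetilde\chi$ over the decomposition of $\mtno$ into the relevant subcomplexes one may discard all cores and decorations that introduce a bridge (this removes several cores outright, including the dumbbell and the two-weight-$1$-vertex core); and the cyclic theta complex $C_{2,n}$ carries only $2$-torsion homology (Theorem~\ref{t:cyclic}), hence contributes $0$ rationally as well. What remains is a short list of families — dominated by the theta graph, whose automorphism group $S_3\times(\ZZ/2\ZZ)$ has order $12$, with the $n$ markings inserted along its three edges, together with the figure-eight family and a few mixed families with a weight-$1$ vertex, and their low-codimension degenerations — whose signed, automorphism-weighted generating functions can be written down and summed.

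The main obstacle, and the source of both the parity $(-1)^n$ and the denominator $12$, is the orientation bookkeeping: deciding, for each decorated graph, whether some automorphism acts on $E(G)$ by an odd permutation. This is delicate exactly for the graphs that matter — those with a loop, or with parallel edges that can be ``flipped'', or with interchangeable tree/subdivision branches — because the parity of the induced edge permutation then depends on how many legs have been inserted (for instance the vertex-swap of the theta graph reverses each of its three subdivided paths, contributing a sign that accumulates to $(-1)^n$ on the top-dimensional cells) and on the internal structure of the branches; the order-$12$ automorphism group of the theta graph is what produces the denominator. Once the surviving generating functions are assembled, extracting the coefficient of $x^n/n!$ yields $(-1)^n\,n!/12$ for $n\ge 4$, the finitely many cases $n\le 3$ being small and checked directly.
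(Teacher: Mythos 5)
Your reduction steps match the paper's: discard the bridge locus because it is contractible (Theorem~\ref{t:bridge}) and discard the cyclic theta types because they contribute nothing rationally (the paper actually uses the cell count of Lemma~\ref{l:dims}, which makes their signed contribution to $\chi$ vanish outright, rather than Theorem~\ref{t:cyclic}, but either works). The problem is that after these reductions the decisive computation — the signed count of what remains — is not carried out, and your description of what remains is wrong. By Lemma~\ref{l:br} and Lemma~\ref{l:thetasleft}, \emph{everything} other than the (nonrepeating) theta types lies in the bridge locus: the figure-eight has a cut vertex, and any genus-$2$ graph with a positive vertex weight has a virtual cut vertex. So there is no ``figure-eight family'' and there are no ``mixed families with a weight-$1$ vertex'' left to count; keeping them in your list would either double-count strata already absorbed into the contractible bridge locus or force you to redo the cancellation among them by hand. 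What actually must be counted is exactly the full theta types in the three codimensions $0,1,2$ (forms $(0,0,k_1,k_2,k_3)$, $(1,0,k_1,k_2,k_3)$, $(1,1,k_1,k_2,k_3)$ with all $k_i>0$), and the content of the theorem is the identity
\[
\tfrac12 f(n) - f(n-1) + \tfrac12 f(n-2) = \tfrac1{12},\qquad f(m)=\sum_{(k_1,k_2,k_3)\vdash m}\frac{1}{\alpha(k_1,k_2,k_3)!},
\]
which your proposal replaces with the assertion that the generating functions ``can be written down and summed.'' That is precisely the step a proof must supply.

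Two further points. First, your heuristics for the constants are not substantiated and are partly misleading: on top-dimensional cells the factor $(-1)^n$ is simply $(-1)^{|E(G)|-1}=(-1)^{n+2}$ and has nothing to do with orientation-reversing automorphisms (a generic full theta type with $n\ge 4$ has trivial automorphism group by Lemma~\ref{c:auts}), and the $1/12$ does not come directly from $|\Aut(\Theta)|=12$ — the number of top cells is $n!f(n)/2$ with $f(n)=n^2/12-n/4+1/6$, and only the alternating sum across three codimensions collapses to $1/12$. Second, your opening formula $\widetilde\chi=\sum_{[G]}(-1)^{|E(G)|-1}[\Aut(G)\subseteq A_{E(G)}]$ computes $\widetilde\chi$ on all of $\mtno$ at once and requires the symmetric-$\Delta$-complex chain formalism, which this paper does not set up; it also sits awkwardly with your subsequent use of additivity over subcomplexes. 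Committing to one route (the paper's: pass to $\dtn$ with its explicit CW structure from Proposition~\ref{p:cw} and count decorated cells) would remove the orientation bookkeeping you correctly identify as delicate, because in $\dtn$ the only nontrivial automorphisms are parallel-edge swaps and these are already resolved by the decorations.
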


\begin{thm}\label{t:3}
For $n=0,\ldots,8,$ the $\QQ$-homology of $\mtno$ in the top two degrees is given in Table~\ref{table:upto8}.

\begin{table} 
\begin{tabular}{c|ccccccccc}
	\hline
  $n$ &        $0$ & $1$& $2$ & $3$ & $4$ & $5$  &  $6$ & $7$ & $8$\\
  \hline
  $\dim H_{n+2}(\mtno,\QQ)$ &  $0$ & $0$ & $1$ & $0$ & $3$ & $15$ & $86$ & $575$ & $4426$\\
  \hline
  $\dim H_{n+1}(\mtno,\QQ)$ &  $0$ & $0$ &$0$ & $0$ & $1$ & $5$ &  $26$ & $155$ &$1066$ \\
  \hline
 \end{tabular}
 \medskip
  \caption{The rational homology of $\mtno$ for $n\le 8$.}
  \label{table:upto8}
\end{table} 
\end{thm}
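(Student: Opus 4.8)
The plan is to compute the rational homology of $\mtno$ for each $n$ in the stated range by combining the structural results already established with an explicit combinatorial model and a cellular chain complex computation. By Theorem~\ref{t:1}, the reduced homology of $\mtno$ is concentrated in degrees $n+1$ and $n+2$, so by Theorem~\ref{t:2} the Euler characteristic relation $\dim H_{n+2} - \dim H_{n+1} = n!/12$ pins down one Betti number once we know the other. Thus it suffices to compute $\dim H_{n+1}(\mtno;\QQ)$ for each $n \le 8$, or equivalently to compute the full homology of the relevant chain complex in just these two degrees.

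First I would set up the symmetric $\Delta$-complex structure on $\mtno$ coming from the combinatorics of genus-$2$ stable tropical curves with $n$ marked points, following the approach of \cite{acp} and \cite{cgp}: cells are indexed by pairs $(G, \text{stable structure})$ of weighted graphs of genus $2$ with $n$ legs, modulo automorphism, and the boundary maps are the usual edge-contraction maps, with signs incorporating the action of automorphisms on orientations of edge sets (so that cells with an orientation-reversing automorphism are collapsed). The key simplification is that the bridge locus is contractible (Theorem~\ref{t:bridge}), so one may pass to the quotient $\mtno / \btno$ without changing reduced homology; likewise the cyclic theta complex $C_{2,n}$ and its $\ZZ$-homology (Theorem~\ref{t:cyclic}), being $\QQ$-acyclic, can be excised rationally. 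This cuts the problem down to a much smaller complex built out of the ``non-bridge, non-cyclic'' graphs — essentially the theta-graph and dumbbell-type topologies with markings distributed among vertices and edges.

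Next I would enumerate, for each fixed $n$, the isomorphism classes of the relevant decorated graphs and assemble the boundary matrices over $\QQ$ in degrees $n$, $n+1$, $n+2$, and $n+3$ (a few degrees around the support suffice, plus a consistency check against vanishing in lower degrees from Theorem~\ref{t:1}). The ranks of these matrices give the Betti numbers via $\dim H_i = \dim C_i - \rank \partial_i - \rank \partial_{i+1}$. For small $n$ this can be done essentially by hand or with a short computer-algebra or combinatorics computation; the counts of top-dimensional cells grow factorially, so for $n = 7, 8$ this is genuinely a machine computation, but a routine one. I would cross-check every value of $\dim H_{n+2} - \dim H_{n+1}$ against $n!/12$ from Theorem~\ref{t:2}, which both validates the enumeration and, in principle, halves the work.

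The main obstacle I expect is the bookkeeping of automorphisms and orientation signs in the cellular chain complex: genus-$2$ graphs have rich automorphism groups (swapping the two loops of a dumbbell, swapping the three edges of a theta, reflecting, permuting parallel edges), and getting the symmetric $\Delta$-complex differential exactly right — including correctly identifying which cells are ``zero cells'' killed by an orientation-reversing automorphism — is where errors would creep in. A secondary difficulty is simply organizing the enumeration of decorated graphs efficiently enough to reach $n=8$; here I would lean on the contractibility and acyclicity reductions above to keep the complex small, and on the structural description of $\mtno$ in terms of a manageable list of underlying genus-$2$ topological types. Once the complex is correctly built, extracting the ranks is mechanical, and the resulting table is Table~\ref{table:upto8}.
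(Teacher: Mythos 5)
Your plan is essentially the paper's proof: collapse the contractible bridge locus, excise the $\QQ$-acyclic cyclic theta complex via~\eqref{eq:overQ}, and compute boundary-matrix ranks of the remaining complex of full theta types by machine (directly for $n\le 3$), with the Euler characteristic as a consistency check. Two small notes: in genus $2$ every dumbbell type contains a bridge, so only theta types survive the first collapse; and the paper economizes by invoking the vanishing $\Ht_i=0$ for $i\le n$ (Proposition~\ref{p:f}) together with the cell counts in the top three dimensions, so that only the single rank of $\partial_{n+2}$ must be computed rather than several boundary matrices.
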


The computations obtained in Theorem~\ref{t:3} were done in \texttt{sage} \cite{sage}.  They are explained at the end of Section~\ref{s:mainproof}.   Theorem~\ref{t:3} combined with Theorem~\ref{t:topwtid} determines the top-weight $\QQ$-cohomology of $\mathcal{M}_{2,n}$, for $n\le 8$: it is concentrated in degrees $n+3$ and $n+4$, of ranks given in the table.

The rest of this paper is devoted to proving the main theorems, Theorems~\ref{t:1},~\ref{t:2}, and~\ref{t:3}.  
The paper is largely organized as a sequence of results that shrink pieces of $\mtno$.  First the space is preserved up to homotopy by shrinking a large sublocus, the bridge locus.  Then it is preserved up to rational homology by shrinking another sublocus, the cyclic theta complex.  This will be enough to deduce the main theorems.  The methods in this paper are computational and combinatorial.  The main technical input is the contractibility of the repeated marking subcomplex, established in \cite{cgp}.  The main technical achievements involved are controlling the homology of the bridge locus and the cyclic theta complex completely, using arguments in combinatorial topology and in graph theory.

Here is the outline of the paper.
Following preliminary definitions in Section 2, we show that the bridge locus is contractible in Section 3.  In Section 4, we consider the quotient $\dtn$ by the bridge locus and produce a CW structure on it, which we use in Section 5 to control the homology of a large subcomplex of $\dtn$, the {cyclic theta complex} mentioned previously.  
Then in Section 6 we put the pieces together to prove Theorem~\ref{t:1} and Theorem~\ref{t:3}.  In Section 7, which is independent of Section 6, we prove the Euler characteristic formula in Theorem~\ref{t:2}.

Before diving in, we refer the reader to previous work by Kozlov, who studied a different but highly related space: the moduli space of tropical curves of genus $g$ with $n$ marked points and with {no vertex weights} \cite{kozlov09, kozlov11}.  These have been called {\em pure} tropical moduli spaces elsewhere.  Kozlov's results on the topology of these spaces and the results in this paper seem to be essentially independent.  Basically, allowing nontrivial vertex-weights changes the spaces significantly.

\bigskip
\noindent {\bf Acknowledgments.}  I am grateful to P.~Hersh and S.~Payne for suggesting this direction of research and especially S.~Payne for answering many questions and pointing me to some key references, notably the discussion of affine stratification number.  I thank B.~Sturmfels for his encouragement and several helpful suggestions, and I also thank D.~Cartwright, J.~Conant, S.~Galatius, and K.~Vogtmann for enlightening discussions.  Finally, many thanks to A.~Gaer for computing support at Harvard.  I was supported by NSF DMS-1204278.

\section{The tropical moduli spaces $\mgn$}

In this section we review the general theory of tropical curves, of any genus, and their moduli spaces.  In subsequent sections, we specialize to the case of genus 2.

\subsection{Graph theory}\label{ss:graphs}
In this paper, all graphs will be finite and connected multigraphs, that is, loops and multiple edges are allowed.  We write $V(G)$ and $E(G)$ for the vertex set and the edge multiset of $G$, respectively.

Edges $e,f \in E(G)$ are called {\bf parallel} if they are distinct nonloop edges incident to the same pair of vertices; we write $e \!\parallel\! f$ if so.  If $e \in E(G)$ is not parallel to any edge, we will say that $e$ is a {\bf singleton.}  A {\bf bridge} in $G$ is an edge whose deletion disconnects $G$.  

The {\bf valence} of a vertex $v$, denoted $\val(v)$, is the number of half-edges at $v$; thus loops based at $v$ contribute twice to this sum.
A {\bf cut vertex} in $G$ is a vertex $v\in V(G)$ whose removal disconnects the graph $G$, considered now as a 1-dimensional CW complex.  Our definition of a cut vertex differs slightly from the standard one in graph theory.  As an example, let $R_g$ denote the graph consisting of $g$ loops based at a single vertex $v$.  Then $v$ is a cut vertex if and only if $g\ge 2$.

A vertex-weighted graph is a pair $(G,w)$ where $G$ is a graph and $w\col V(G)\ra \ZZ_{\ge 0}$ is an arbitrary weight function.  Following \cite{ac}, we define the {\bf virtual graph} $G^w$ to be the 
(unweighted) graph obtained from $G$ by adding $w(v)$ distinct loops to each vertex $v$.  We say that a vertex of $v$ is a {\bf virtual cut vertex} of $(G,w)$ if $v$ is a cut vertex of $G^w$.  For example, of the seven vertex-weighted graphs in Figure~\ref{f:genus2}, all but the first have a virtual cut vertex.

\subsection{Tropical curves with marked points}

\begin{definition}\cite{BMV, cap13, mik06}
A {\bf (stable) tropical curve} with $n$ marked points is a quadruple $\Gamma = (G,l,m,w)$ where:

\begin{itemize}
\item $G$ is a graph (as in \S\ref{ss:graphs}),
\item $l\col E(G)\ra \RR_{>0}$ is any function, called a {\em length function}, on the edges,
\item $m\col \set{n} \ra V(G)$ is any function, called a {\em marking function}, and
\item $w\col V(G)\ra \ZZ_{\ge 0}$ is any function; 
\end{itemize}
such the following condition, called the {\em stability condition}, holds: for every vertex $v\in V(G)$, we have
\begin{equation}\label{eq:stability}
2w(v) - 2 + \val(v) + |m^{-1}(v)| > 0.
\end{equation}

\end{definition}
\noindent We remark that our marking function $m$ is combinatorially equivalent to the more common setup of attaching infinite rays to a graph, labeled $\{1,\ldots,n\}$, and we use $m$ just for convenience.

The {\bf genus} of a tropical curve $\Gamma$ is 
\begin{equation}\label{eq:genus}
g(\Gamma) := |E(G)|-|V(G)|+1 + \! \sum_{v\in V(G)} \! w(v).
\end{equation}

\noindent The {\bf total length} of $\Gamma$ is the sum of its edge lengths.  
The total length is called the {\em weight} of the tropical curve elsewhere, but we won't use this terminology since we already have weights on vertices.

\begin{definition}\label{d:type}
Let $\Gamma=(G,l,m,w)$ be a stable tropical curve of genus $g$.

\begin{enumerate}
	\item \label{it:combinatorial}The {\bf combinatorial type} of $\Gamma$ is the triple $(G,m,w)$.  We also define the genus of a combinatorial type as in~\eqref{eq:genus}.  We will write $(G,m,w)=\G$ for short.
We write $T_{g,n}$ for the set of all combinatorial types of tropical curves of genus $g$ with $n$ marked points.  

\item \label{it:unmarked} If $g\ge 2$, we define the {\bf unmarked type} or {\bf underlying type} of $\Gamma$ to be the combinatorial type $(G',w')\in T_{g,0}$ obtained as follows: $(G', w')$ is the smallest connected, vertex-weighted subgraph of $(G,w)$ that contains all cycles of $G$ and all vertices of positive weight.  Here, we assume that we have suppressed all vertices of valence 2, so that $(G',w') $ is stable of genus $g$.
\end{enumerate}
\end{definition}

\begin{example}
There are exactly seven possible unmarked types of tropical curves of genus 2; they are shown in Figure~\ref{f:genus2}.
\end{example}

\begin{figure}
\begin{tikzpicture}[my_node/.style={fill, circle, inner sep=1.75pt}, scale=1]
\begin{scope}
\node[my_node] (A) at (-.5,0){};
\node[my_node] (B) at (.5,0){};
\draw[ultra thick] (A)--(B);
\draw[ultra thick] (A) to [bend right=90] (B);
\draw[ultra thick] (A) to [bend left=90] (B);
\draw (0,-1) node {I};
\end{scope}
\begin{scope}[shift = {(2,0)}]
\node[my_node] (A) at (-.25,0){};
\node[my_node] (B) at (.25,0){};
\draw[ultra thick] (A)--(B);
\draw[ultra thick] (A) to [out = 225, in = 135, looseness=20] (A);
\draw[ultra thick] (B) to [out = 45, in = 315, looseness=20] (B);
\draw (0,-1) node {II};
\end{scope}
\begin{scope}[shift = {(4,0)}]
\node[my_node] (A) at (0,0){};
\draw[ultra thick] (A) to [out = 225, in = 135, looseness=20] (A);
\draw[ultra thick] (A) to [out = 45, in = 315, looseness=20] (A);
\draw (0,-1) node {III};
\end{scope}
\begin{scope}[shift = {(6,0)}]
\node[my_node] (A) at (-.25,0){};
\node[my_node, label=right:$1$] (B) at (.25,0){};
\draw[ultra thick] (A)--(B);
\draw[ultra thick] (A) to [out = 225, in = 135, looseness=20] (A);
\draw (0,-1) node {IV};
\end{scope}
\begin{scope}[shift = {(7.5,0)}]
\node[my_node, label=right:$1$] (A) at (.25,0){};
\draw[ultra thick] (A) to [out = 225, in = 135, looseness=20] (A);
\draw (0.25,-1) node {V};
\end{scope}
\begin{scope}[shift = {(9.5,0)}]
\node[my_node, label=left:$1$] (A) at (-.25,0){};
\node[my_node, label=right:$1$] (B) at (.25,0){};
\draw[ultra thick] (A)--(B);
\draw (0,-1) node {VI};
\end{scope}
\begin{scope}[shift = {(11,0)}]
\node[my_node, label=right:$2$] (B) at (0,0){};
\draw (0,-1) node {VII};
\end{scope}
\end{tikzpicture}
\caption{The seven combinatorial types in $T_{2,0}$. The vertices have weight zero unless otherwise indicated.}\label{f:genus2}
\end{figure}
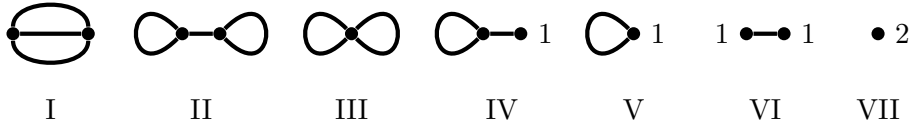

\begin{remark}\label{r:nonempty}
  There is an obvious notion of isomorphism of combinatorial types $(G_1,m_1,w_1) \cong (G_2,m_2,w_2)$, i.e.~a multigraph isomorphism between $G_1$ and $G_2$  carrying $m_1$ to $m_2$ and $w_1$ to $w_2$.
Then there are only finitely many types in $T_{g,n}$ up to isomorphism.  
Furthermore, $T_{g,n}$ is nonempty if and only if $2g-2+n>0$, due to the stability condition~\eqref{eq:stability}.  
\end{remark}

Let $\G=(G,m,w) \in T_{g,n}$, and let $e$ be any edge of $G$.  The {\bf contraction} $\G/e$ is defined to be the combinatorial type $\G' = (G',m',w')\in T_{g,n}$ obtained  as follows: if $e$ is a loop based at $v\in V(G)$, then $\G'$ is obtained by deleting $e$ and increasing $w(v)$ by 1.  Otherwise, if $e$ has endpoints $v_1$ and $v_2$, then $\G'$ is obtained by deleting $e$ and identifying $v_1$ and $v_2$ into a new vertex $v$.  We set $w'(v) = w(v_1) + w(v_2)$.

Now for any subset $S\subseteq E(G)$, we define $\G/S$ to be the contraction of all edges in $S$.  It is well-known that they can be contracted in any order.  
Note that the new type $(G,m,w)/S$ also lies in $T_{g,n}$.

\subsection{Construction of $\mgn$}  Given $\G= (G,m,w) \in T_{g,n}$, write
\begin{eqnarray*}
\ov{C}(\G) &:=& \RR_{\ge 0}^{E(G)}, \\
\D(\G) &:=& \{l\col E(G)\ra \RR_{\ge 0}\}~|~\sum{ l(e)} = 1\} \subset \ov{C}(\G).
\end{eqnarray*}
So $\D(\G)$ is a simplex of dimension $|E(G)|\!-\!1$ and $\ov{C}(\G)$ is an infinite cone over it.

Now given types $\G, \G'\in T_{g,n}$ and an isomorphism 
$$\alpha\col \G' \stackrel{\cong}{\lra}\G/S$$
for some $S \subseteq E(G)$, 
we associate the linear map $$L_\alpha\col \ov{C}(\G')\ra \ov{C}(\G)$$
that identifies $\ov{C}(\G')$ with the face of $\ov{C}(\G)$ consisting of those $l\in \RR_{\ge 0}^{E(G)}$ that are zero on $S$.  That is, it sends $l'\col E(G')\ra \RR_{\ge 0}$ to $l\col E(G) \ra \RR_{\ge 0}$ with
$$l(e) = \begin{cases}
0 &\textrm{if }e \in S,\\
l'(\alpha^{-1}(e)) &\textrm{otherwise.}
\end{cases}
$$

Call such a linear map $L_\alpha$ a {\em face identification.}
Note that we specifically allow $S$ to be empty, in which case a nontrivial automorphism of $\G$ produces a map $\ov{C}(\G)\ra\ov{C}(\G)$.  

\begin{defn}\label{d:mgn}\cite{BMV, cap13}
The {\bf moduli space of tropical curves}, denoted $\mgn$, is the colimit in the category of topological spaces
$$ \lim_{\lra} (\{\ov{C}(\G)\}, \{L_\alpha\})$$
as $\G$ ranges over $T_{g,n}$ and $\{L_\alpha\}$ is the set of all face identifications.
\end{defn}

\noindent Note that the points of $\mgn$ are in bijection with isomorphism classes of tropical curves of genus $g$ and $n$ marked points.  It suffices for our purposes to simply regard $\mgn$ as a topological space; it can be given extra structure, e.g.~that of a stacky polyhedral fan, as in \cite{cmv}.

\begin{defn}\label{d:mgno}
Let $\mgno$ denote the subset of $\mgn\!$ parametrizing tropical curves of total length 1. Thus
$$\mgno = \lim_{\lra} (\{\D(\G)\}, \{L_\alpha\}),$$
where by abuse of notation we write $L_\alpha$ for the restrictions of the maps $L_\alpha$ to the simplices $\D(\G)$.  
\end{defn}

\noindent Thus  $\mgn$ is an infinite cone over $\mgno$; the cone point is the unique tropical curve in $\mgn$ that is a single vertex of weight $g$ and $n$ markings.

\begin{defn}
We say that a combinatorial type $(G,m,w)$ or a tropical curve $(G,l,m,w)$ 
has {\bf repeated markings} if the marking function $m$ is not injective.  Write $\Tr_{g,n}$ and $\Tnr_{g,n}$ for the set of repeating and nonrepeating types, respectively, in $T_{g,n}$.
Call the subset of $\mgno$ of  tropical curves with repeated markings the {\bf repeated marking locus}, denoted $\rgno$.
\end{defn}  

If $(G,m,w)$ has repeated markings then any contraction $(G,m,w)/S$ does too.  
It follows that $\rgno$ is closed.  The following result, proved in \cite{cgp}, will be used in the next section:
\begin{thm}\cite{cgp}\label{t:repcollapse}
For all $g > 0$ and $n>1$, $\rgno$ is contractible.
\end{thm}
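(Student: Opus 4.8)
The plan is to put a cell structure on $\rgno$ and prove contractibility by a discrete Morse / acyclic matching argument of the kind developed in \cite{cgp}. The cells of $\rgno$ are the simplices $\D(\G)$ for $\G=(G,m,w)\in \Tr_{g,n}$, a type with $p+1$ edges contributing a $p$-cell, glued along the edge-contraction face maps of Section~2; since contracting an edge cannot turn a non-injective marking function into an injective one, this really is a closed subcomplex of $\mgno$. To prove it is contractible it suffices to construct an acyclic matching on its poset of cells leaving a single unmatched (critical) cell.

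The matching I would use is generated by an ``attach/detach a pair of coincident markings along a pendant edge'' move. Fix the usual order on $\{1,\dots,n\}$. Given a representative $\Gamma=(G,l,m,w)$ of a cell of $\rgno$, single out a canonical pair of markings sitting at a common vertex -- for instance, let $j$ be the largest marking that is not alone at its vertex (this exists because $m$ is not injective), set $v=m(j)$, and let $k$ be the next marking at $v$ in the chosen order -- and distinguish two cases. If $v$ is a weight-$0$ valence-$1$ vertex with $m^{-1}(v)=\{j,k\}$, the cell is matched \emph{down} with $\G/e$, where $e$ is the unique edge at $v$; otherwise it is matched \emph{up} with the type obtained from $\G$ by detaching $j$ and $k$ from $v$, creating a new weight-$0$ leaf $u$ that carries exactly $j,k$ and is joined to $v$ by a new edge. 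One checks that these moves preserve non-injectivity of $m$ and the distinguished marking $j$, that (once the canonical choice of $\{j,k\}$ is pinned down correctly) they are mutually inverse, and that each down move strictly decreases a suitable size statistic, so the matching is acyclic.

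It then remains to analyze the critical cells, i.e.\ the cells on which neither move is legal. The up move fails only if detaching $j,k$ from $v$ would violate the stability inequality $2w(v)-2+\val(v)+|m^{-1}(v)|>0$, which (using $|m^{-1}(v)|\ge 2$) forces $v$ to be a weight-$0$ leaf with $m^{-1}(v)=\{j,k\}$; the down move fails only when $\Gamma$ is a single edge for which $\G/e$ is the apex of $\mgn$, hence outside the link. Tracking down these boundary configurations -- and, if needed, perturbing the matching rule on the small family of single-edge curves and on the curves whose only non-pendant structure is carried by the marked vertex -- should leave exactly one critical cell, e.g.\ a single vertex of weight $g-1$ bearing one loop and all $n$ markings; contractibility of $\rgno$ then follows from the Morse-theoretic machinery for symmetric $\Delta$-complexes.

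The hard part is precisely these last steps: making the canonical choice of $\{j,k\}$ rigid enough that the up and down moves are genuinely inverse, and then organizing the case analysis of pendant leaves, single-edge curves, and the top-weight vertex so that exactly one cell survives -- naive rules visibly leave several critical cells, and the number of survivors is sensitive to $g$ and $n$. If that route proves too delicate, an inductive approach on $n$ -- decomposing $\rgno$ according to whether marking $n$ is alone and whether markings $1,\dots,n-1$ are all distinct, and comparing the pieces with moduli spaces of fewer markings via the forgetful map $\mgn\to M_{g,n-1}^{\mathrm{trop}}$ -- is also worth exploring, though the individual pieces there are not obviously contractible and would themselves need a separate collapsing argument.
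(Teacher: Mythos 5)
The paper does not actually prove this statement; it is quoted from \cite{cgp}, where (as in the proof of Theorem~3.2 of the present paper, which is modeled on it) the argument is a sequence of explicit glued deformation retractions on the simplices $\D(\G)$, not a discrete Morse argument. So your proposal is a genuinely different route, but as written it has a real gap at exactly the point you flag: the matching is not well-defined. The down move does not preserve the canonical pair $\{j,k\}$. Concretely, take $g=1$, $n=3$, and let $\G$ have a vertex $v'$ carrying a loop and the marking $3$, joined by an edge $e$ to a weight-$0$ leaf $v$ carrying the markings $1,2$. Your rule picks $j=2$, $k=1$ and matches $\G$ down to $\G/e$, a single looped vertex carrying $1,2,3$. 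But on $\G/e$ the largest non-alone marking is $3$, so your rule matches $\G/e$ \emph{up} by detaching $\{3,2\}$, which is not $\G$. Thus ``down'' is not inverse to ``up'' and the pairing is not an involution; pinning the pair down (e.g.\ always using the two smallest coincident markings) runs into the same problem in the other direction, because an up move at $v$ can leave a smaller coincident pair behind at $v$. This is not a technicality to be smoothed over: it is the crux, and no rule of the ``canonical pair'' form obviously survives both contraction and expansion.

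Two further points. First, even granting a well-defined acyclic matching, your own critical-cell analysis already produces more than one survivor: for $n=3$ the three single-edge types consisting of a weight-$g$ vertex carrying one marking joined to a weight-$0$ leaf carrying the other two are all $0$-cells on which the prescribed down move exits the link, hence all critical; a complex with several critical $0$-cells is not shown contractible by this method. You acknowledge that ``naive rules visibly leave several critical cells,'' but repairing this is the entire content of the theorem, not a loose end. Second, the cells of $\rgno$ are quotients of the simplices $\D(\G)$ by $\Aut(\G)$ (the colimit in Definition~2.7 includes self-identifications), so one needs the equivariant/symmetric version of discrete Morse theory and a matching compatible with automorphisms; this is achievable but is another place where the argument must be made precise rather than gestured at. As it stands the proposal is a plausible strategy sketch with the decisive steps missing.
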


\section{The bridge locus is contractible in genus 2}  \label{s:bridge}

We now define the bridge locus, a large subset of $\mgno$ consisting of the closure of all tropical curves with bridges.  The main result of this section 
is that when $g=2$, the bridge locus is contractible.

\begin{definition}\label{d:br}
Fix $g, n\ge 0$ with $2g-2+n>0$. We define $\bgn$ to be the closure in $\mgn$ of the set of tropical curves containing a bridge.  We let $\bgno$ be the closed subset of $\bgn$ of curves of total length 1, called the {\bf bridge locus}.
\end{definition}

Let $\Tbr_{g,n}$ denote the combinatorial types of curves occurring in the bridge locus.  That is, $\Tbr_{g,n}$ is the closure under the contraction operation of the combinatorial types that have bridges.

\begin{lemma} \label{l:br}
Fix $g, n\ge 0$ with $2g-2+n>0$.  
A type $\G\in T_{g,n}$ lies in $\Tbr_{g,n}$ if and only if 
\begin{enumerate}
	\item \label{it:inrep} $\G\in \Tr_{g,n}$, or
	\item \label{it:virtualcut} $\G$ has a virtual cut vertex (see \S\ref{ss:graphs}).
\end{enumerate}
\end{lemma}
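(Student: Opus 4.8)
The plan is to prove both implications by understanding exactly which combinatorial types can arise as contractions of types possessing a bridge. First, the ``if'' direction. If $\G \in \Tr_{g,n}$, I want to exhibit a type with a bridge that contracts to $\G$. Since $\G$ has two markings $i,j$ at the same vertex $v$, I can ``uncontract'' by splitting off a new valence-one vertex $v'$ carrying the marking $j$ (and weight zero), joined to $v$ by a single edge $e$; stability at $v'$ holds because $2\cdot 0 - 2 + 1 + 1 = 0$ fails — so instead I attach $v'$ with both markings $i$ and $j$, or more carefully I move enough markings/a loop to make $v'$ stable; in any case $e$ is a bridge and contracting $e$ recovers $\G$. (A cleaner route: the type in $T_{g,n}$ obtained by taking $\G$ and attaching a single-edge ``tail'' to $v$ with one marking moved onto the new vertex plus a weight-$1$ bump or an extra loop — I need to check stability case by case, but there is always such an uncontraction since $|m^{-1}(v)| \ge 2$.) For case~\eqref{it:virtualcut}, suppose $\G$ has a virtual cut vertex $v$, i.e.\ $v$ disconnects $G^w$. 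I uncontract at $v$: split $v$ into $v_1,v_2$ joined by an edge $e$, distributing the weight and the half-edges of $v$ between $v_1$ and $v_2$ so that both pieces of $G^w \setminus \{v\}$ end up on opposite sides; then $e$ is a bridge in the new graph and contracts back to $\G$. Here I must verify the resulting type is stable and has genus $g$ — genus is preserved by contraction, and stability can be arranged by how I apportion the weight and markings.

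For the ``only if'' direction, I want to show $\Tbr_{g,n}$ is contained in the set described by \eqref{it:inrep}–\eqref{it:virtualcut}. Since $\Tbr_{g,n}$ is by definition the closure of $\{\text{types with a bridge}\}$ under contraction, it suffices to show: (a) every type with a bridge is either repeating or has a virtual cut vertex, and (b) the set of types satisfying \eqref{it:inrep} or \eqref{it:virtualcut} is closed under contraction. For (a): if $e$ is a bridge in $G$, deleting $e$ splits $G$ into two components $A$ and $B$; the endpoint $v_A$ of $e$ in $A$ is then a virtual cut vertex of $(G,w)$ unless $A$ is ``trivial'' — i.e.\ $A$ has no cycles, no positive-weight vertex, and carries too few markings to matter — but by stability a trivial leaf component forces repeated markings. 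I will spell out this trichotomy: either one side of the bridge contributes a cycle or weight (giving a genuine virtual cut vertex), or both sides are trees with weight zero, in which case stability at the leaves forces the bridge's endpoints or nearby vertices to carry $\ge 2$ markings, i.e.\ $\G \in \Tr_{g,n}$. For (b): if $\G \in \Tr_{g,n}$, any contraction stays repeating (already noted in the text). If $\G$ has a virtual cut vertex but is not repeating, I must check that contracting any edge $f$ leaves a type still in the list; contracting $f$ either preserves the virtual cut vertex $v$ (if $f$ is not incident to $v$, or is incident but doesn't ``heal'' the disconnection) or, if it does destroy the separation at $v$, I argue the contracted graph is small enough — genus $2$, few edges — that it must then be repeating or have a different virtual cut vertex.

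The main obstacle I anticipate is the bookkeeping in part (a) and the closure step (b) of the ``only if'' direction: precisely handling the degenerate cases where a bridge sits next to a tree component of weight zero, and ensuring that after contraction the ``virtual cut vertex'' condition is either inherited or replaced by repeated markings. This is where the genus-$2$ hypothesis and the stability inequality \eqref{eq:stability} do real work — with total genus only $2$, a bridge-adjacent tree component that contributes no genus and no weight has essentially no room, so stability at its leaves pins down the markings. I would organize the argument as a short case analysis on the two sides of a bridge (does each side contain a cycle? a positive-weight vertex?), reducing in each branch to one of \eqref{it:inrep} or \eqref{it:virtualcut}, and then handle closure under contraction by the same side-of-bridge dichotomy applied to $G^w$.
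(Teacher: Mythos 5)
Your overall plan is workable, but there are two concrete problems. First, you repeatedly lean on a genus-$2$ hypothesis (``the contracted graph is small enough --- genus $2$, few edges''; ``with total genus only $2$ \dots there is essentially no room''), but this lemma is stated and used for arbitrary $g,n$ with $2g-2+n>0$; the paper specializes to $g=2$ only \emph{after} it. Consequently step (b) of your ``only if'' direction --- closure under contraction of the set of types satisfying \eqref{it:inrep} or \eqref{it:virtualcut} --- is not actually established. The genus-free repair is to note that for a \emph{non-repeating} type, every component of $G^w\setminus v$ at a virtual cut vertex $v$ must carry positive genus: a weight-zero tree branch has a leaf $u\ne v$ with $\val(u)=1$ and $w(u)=0$, so stability forces $|m^{-1}(u)|\ge 2$, i.e., repeated markings. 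A separation into positive-genus pieces is what survives contraction. This is, in effect, how the paper sidesteps your step (b) entirely: it shows that every type \emph{with a bridge} is either repeating or has a bridge in its unmarked type, and each of those two properties is manifestly inherited by every contraction, so no separate closure argument for the set \eqref{it:inrep}$\,\cup\,$\eqref{it:virtualcut} is needed.

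Second, the ``if'' direction for $\G\in\Tr_{g,n}$ is left unresolved. You correctly observe that splitting off a leaf carrying only the single marking $j$ violates stability ($-2+1+1=0$), but you then offer a menu of fixes without committing, and one of them (adding ``a weight-$1$ bump or an extra loop'') changes the genus and hence does not produce a type in $T_{g,n}$ contracting to $\G$. The correct and only needed fix is the first option you float: expand $v$ into a bridge whose new leaf carries \emph{both} repeated markings $i$ and $j$ (stability: $-2+1+2=1>0$), then contract the bridge to recover $\G$. With these two points repaired, your side-of-the-bridge trichotomy in part (a) --- each side contains a cycle or positive weight, versus a weight-zero tree whose leaves are pinned down by stability --- is exactly the paper's argument.
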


\begin{proof}
Suppose ${\mathbf H}\in \Tbr_{g,n}$.  Then ${\mathbf H}$ is a contraction of some type $\G=(G,m,w) \in \Tbr_{g,n}$ that has a bridge, say $e=v_1v_2\in E(G)$.  We will show that (1) or (2) holds for ${\mathbf H}$ by analyzing $\G$.
Basically, if $e$ separates two subcurves of positive genus, then $\G$ and hence ${\mathbf H}$ must have a virtual cut vertex. If instead one subcurve has genus 0 then $\G$ and hence ${\mathbf H}$ must be repeating.
 
More formally: consider the following operation on $\G$.  We delete $e$ and add a new marked point to each of $v_1$ and $v_2$, obtaining two types $\G_1 = (G_1,m_1,w_1)$ and $\G_2 = (G_2,m_2,w_2)$, which are each stable by the stability condition~\eqref{eq:stability}.  

Now, if $g(\G_i)>0$ for both $i$, then $e$ must have been a bridge in the unmarked type for $\G$. Then any contraction of $\G$ has a virtual cut vertex.
Otherwise, one of the graphs, say $G_1$, is a tree.
If $G_1 = \{v_1\}$ then it supports at least three markings, of which at least two are original to $\G$.  Otherwise, consider any leaf of $G_1$ other than $v_1$; then it supports at least two markings in $G$.  So $\G$ was repeating, and any contraction of $\G$ must be repeating.  This proves that if ${\mathbf H}\in \Tbr_{g,n}$ then \eqref{it:inrep} or~\eqref{it:virtualcut} holds.

For the converse, suppose that ${\mathbf H}$ has a virtual cut vertex. Then this vertex can be expanded into a bridge that separates two types whose genera sum to $g$.  Similarly, if ${\mathbf H}\in \Tr_{g,n}$, then ${\mathbf H}$ has a vertex $v$ supporting markings $i\ne j$.  So the vertex $v$ can be expanded into a bridge that separates markings $i$ and $j$ from the rest of the curve.

\end{proof}

Now we specialize to the case $g=2$.

\begin{thm}\label{t:bridge}
For each $n\ge 0,$ the bridge locus $\btno$ is contractible.
\end{thm}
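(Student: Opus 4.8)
The plan is to exhibit a deformation retraction of $\btno$ onto a contractible subcomplex, building on the contractibility of the repeated marking locus $\rtno$ (Theorem~\ref{t:repcollapse}). By Lemma~\ref{l:br}, every combinatorial type in $\Tbr_{2,n}$ either is repeating or has a virtual cut vertex; so $\btno = \rtno \cup Z$, where $Z$ is the closure of the locus of nonrepeating types with a virtual cut vertex. Since $\rtno$ is contractible, it suffices to show that the inclusion $\rtno \hookrightarrow \btno$ is a homotopy equivalence, and for this I would produce a sequence of elementary collapses (or, more robustly, a poset-theoretic discrete Morse argument on the face poset of the symmetric $\Delta$-complex $\btno$) that push everything in $Z$ down into $\rtno$.

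Concretely, first I would analyze the structure of a nonrepeating genus $2$ type with a virtual cut vertex $v$. Cutting at $v$ decomposes the curve into pieces; in genus $2$ the only possibilities are that $v$ separates off a genus $1$ piece and a genus $1$ piece, or a genus $2$ piece and a genus $0$ piece (a tree), or $v$ is the base of a loop or lies on a bridge whose removal leaves a tree component. In every case where a tree hangs off $v$, that tree carries at least two markings (else the type is unstable or $v$ is not genuinely a cut vertex in $G^w$ after suppressing valence-$2$ vertices), so there is a nearby canonical way to slide markings together — this is exactly the mechanism that realizes such a type as a specialization of a repeating type, and it is the engine of the retraction. For the genus-$1$-plus-genus-$1$ case, I would contract the bridge separating the two elliptic tails; the key point is that after contraction the resulting type, while possibly still nonrepeating, has strictly fewer edges or a more degenerate virtual cut vertex, so an induction on the number of edges (or on some complexity measure like $|E(G)|$ minus the number of "collapsible" bridges) closes the argument.

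The cleanest implementation is probably: put a partial matching on the face poset of $\btno$ that matches each cell of a nonrepeating-with-virtual-cut-vertex type $\Gamma$ with the cell of $\Gamma/e$ for a canonically chosen bridge or loop $e$ at the virtual cut vertex (for instance, the edge incident to the "smallest" marking on the tree side, or a fixed bridge between the two elliptic components), checking that this matching is acyclic and that its critical cells are exactly the cells of repeating types. Then discrete Morse theory for CW complexes (applicable here because $\btno$ is a symmetric $\Delta$-complex, so one works with the barycentric-type subdivision or directly with the combinatorial model as in \cite{cgp}) gives $\btno \simeq \rtno$, which is contractible by Theorem~\ref{t:repcollapse}. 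The main obstacle I anticipate is verifying acyclicity of the matching: one must choose the collapsing edge $e$ in a way that is genuinely functorial under contraction, so that following matched/unmatched edges never cycles — the subtlety is that contracting $e$ can change which vertex is the virtual cut vertex and can merge markings or create new parallel edges, so the "canonical choice" rule has to be stable under all these operations. I would handle this by a careful case analysis of the seven unmarked genus $2$ types in Figure~\ref{f:genus2}, using the unmarked type of $\Gamma$ (Definition~\ref{d:type}\eqref{it:unmarked}) to organize the choice of $e$: types I and II have no virtual cut vertex, and for the remaining five the elliptic-component or loop-bridge structure pins down $e$ unambiguously. A secondary check is at the boundary between $Z$ and $\rtno$ — cells of types that are simultaneously repeating and have a virtual cut vertex must be left critical (unmatched within $Z$) so they are absorbed correctly into the $\rtno$ part; this is automatic if the matching is defined only on the strictly-nonrepeating cells.
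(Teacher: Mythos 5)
Your endgame is the same as the paper's -- reduce contractibility of $\btno$ to contractibility of $\rtno$ via Theorem~\ref{t:repcollapse} -- but the engine that is supposed to carry $\btno$ down to $\rtno$ is not actually built, and the plan as stated has a concrete error. First, the error: you organize the case analysis by asserting that ``types I and II have no virtual cut vertex.'' Under the paper's (nonstandard) definition of cut vertex, the loop-carrying vertices of the dumbbell graph II \emph{are} cut vertices (removing such a vertex from the $1$-complex disconnects the interior of its loop), and indeed the paper states that all of types II--VII have a virtual cut vertex. Since $T_{2,n}=\Tbr_{2,n}\amalg\TTh_{2,n}$, the dumbbell types form a large part of the bridge locus, and a matching scheme that treats them as outside the locus to be collapsed cannot work. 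Second, the discrete Morse matching is only sketched where it is easy and deferred where it is hard: (i) un-contraction is not unique, so a cell $\mathbf{H}$ can be the ``canonical contraction'' $\G/e(\G)$ of several non-isomorphic $\G$ (e.g.\ the markings at the new cut vertex of $\G/e$ can be redistributed in several ways when the edge is re-expanded), and you give no rule that makes the up/down assignment well defined and perfect on the nonrepeating cells; (ii) you identify acyclicity as the main obstacle but propose no monotone quantity along alternating paths; (iii) the cells $\Delta(\G)$ for $\G\in\Tbr_{2,n}$ can have nontrivial automorphisms, so the ``face poset of $\btno$'' is not that of a regular CW complex and standard discrete Morse theory does not apply off the shelf. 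An induction on $|E(G)|$ does not substitute for these: fewer edges after contraction does not by itself produce a deformation retraction.

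For comparison, the paper avoids all of this by working continuously. It filters $\btno=X_0\supseteq X_1\supseteq\cdots\supseteq X_{n+2}$, where $X_i$ consists of $\rtno$ together with the nonrepeating curves for which the path $P$ of bridges (or the virtual cut vertex itself) supports at least $i$ markings plus weight; the quantity $|m^{-1}(P)|+|w^{-1}(P)|$ is exactly the monotone complexity measure your matching lacks. Each step $X_i\searrow X_{i+1}$ is an explicit piecewise-linear strong deformation retraction defined on the simplices of ``expanded'' types, which simultaneously shrinks all non-expanding edges by the minimum of their lengths and lengthens the one or two expanding edges; at time $1$ some shrinking edge is contracted, which necessarily either moves a marking or weight onto $P$ or creates a repeated marking. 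Finally $X_{n+2}$ (which is $\rtno$ plus the curves supported on type VI) is retracted onto $\rtno$ by the same device. If you want to salvage the discrete approach, you would essentially have to rediscover this filtration to certify acyclicity, and you would still need to address the automorphism issue; as written, the proposal is a strategy with the decisive steps missing rather than a proof.
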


\begin{proof}
We set some terminology.  
Suppose $\G=(G,m,w)\in\Tbr_{2,n}$ is nonrepeating, and therefore has a virtual cut vertex by Lemma~\ref{l:br}.  By the classification in Figure~\ref{f:genus2}, the bridges in $\G$, if any, form a single path that we will denote $P=P(\G)$, with vertices $p_0,\ldots, p_k$, throughout.  For example, in Figure~\ref{fig:stretching}, the paths $P$ have lengths $4,3,3,$ and $2$.  By convention, if $\G$ has no bridges then we will take $P=p_0$ to be the unique virtual cut vertex in $\G$.  Each interior vertex $p_1,\ldots, p_{k-1}$ necessarily supports exactly one marked point, while $p_0$ and $p_k$ support zero or one marked points each. 
We extend this definition to any genus 2, nonrepeating $n$-marked tropical curve $\Gamma$ with a virtual cut vertex, defining $P=P(\Gamma)$ to be the path formed by all the bridges of $\Gamma$, or the unique virtual cut vertex of $\Gamma$ if $\Gamma$ has no bridges.  

Suppose again that $\G\in \Tbr_{2,n}$ is nonrepeating.  Say that $\G$ is an {\em expanded} type if $P(\G)$ has positive length and both $p_0$ and $p_k$ are unmarked.  If $\G$ is an expanded type, call the first and last edges of $P$ {\em expanding}, and call all the other edges of $\G$ {\em shrinking}.  So an expanded type $\G$ has one expanding edge if $P(\G)$ has length 1, and two expanding edges otherwise.  See Figure~\ref{fig:stretching}.    Notice that by contracting one or both of the expanding edges of any expanded type, we recover all nonrepeating types in $\Tbr_{2,n}$: thus
\begin{obs}\label{o:expanded}
Every nonrepeating type $\G'\in \Tbr_{2,n}$ is obtained from an expanded type $\G\in \Tbr_{2,n}$ by contracting one or two expanding edges of $\G$.  
\end{obs}
\noindent There is no structural meaning to the terms expanding or shrinking, except in relation to the deformation retracts we are about to define.
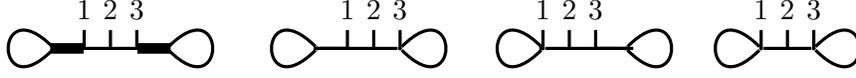
\begin{figure}
\begin{tikzpicture}[my_node/.style={fill, circle, inner sep=1.75pt}, scale=1]
\begin{scope}[shift = {(0,0)}]
\node[my_dot] (A) at (-.75,0){};
\node[my_dot] (B) at (.75,0){};
\node[inv] (m1) at (-.35,0){};
\node[inv, label=90:$1$] (n1) at (-.35,.25){};
\draw[very thick] (m1)--(n1);
\node[inv] (m2) at (0,0){};
\node[inv, label=90:$2$] (n2) at (0,.25){};
\draw[very thick] (m2)--(n2);
\node[inv] (m3) at (.35,0){};
\node[inv, label=90:$3$] (n3) at (.35,.25){};
\draw[very thick] (m3)--(n3);
\draw[line width=4] (-.8,0)--(m1);
\draw[line width=4] (.8,0)--(m3);
\draw[very thick] (A)--(B);
\draw[very thick] (A) to [out = 225, in = 135, looseness=60] (A);
\draw[very thick] (B) to [out = 45, in = 315, looseness=60] (B);
\end{scope}
\begin{scope}[shift = {(6.5,0)}]
\node[my_dot] (A) at (-.75,0){};
\node[my_dot] (B) at (.35,0){};
\node[inv] (m1) at (-.75,0){};
\node[inv, label=90:$1$] (n1) at (-.75,.25){};
\draw[very thick] (m1)--(n1);
\node[inv] (m2) at (-.4,0){};
\node[inv, label=90:$2$] (n2) at (-.4,.25){};
\draw[very thick] (m2)--(n2);
\node[inv] (m3) at (-.05,0){};
\node[inv, label=90:$3$] (n3) at (-.05,.25){};
\draw[very thick] (m3)--(n3);
\draw[very thick] (-.8,0)--(m1);
\draw[very thick] (.45,0)--(m3);
\draw[very thick] (A)--(B);
\draw[very thick] (A) to [out = 225, in = 135, looseness=60] (A);
\draw[very thick] (B) to [out = 45, in = 315, looseness=60] (B);
\end{scope}
\begin{scope}[shift = {(3.5,0)}]
\node[my_dot] (A) at (-.75,0){};
\node[my_dot] (B) at (.35,0){};
\node[inv] (m1) at (-.35,0){};
\node[inv, label=90:$1$] (n1) at (-.35,.25){};
\draw[very thick] (m1)--(n1);
\node[inv] (m2) at (0,0){};
\node[inv, label=90:$2$] (n2) at (0,.25){};
\draw[very thick] (m2)--(n2);
\node[inv] (m3) at (.35,0){};
\node[inv, label=90:$3$] (n3) at (.35,.25){};
\draw[very thick] (m3)--(n3);
\draw[very thick] (-.8,0)--(m1);
\draw[very thick] (A)--(B);
\draw[very thick] (A) to [out = 225, in = 135, looseness=60] (A);
\draw[very thick] (B) to [out = 45, in = 315, looseness=60] (B);
\end{scope}
\begin{scope}[shift = {(9,0)}]
\node[my_dot] (A) at (-.35,0){};
\node[my_dot] (B) at (.35,0){};
\node[inv] (m1) at (-.35,0){};
\node[inv, label=90:$1$] (n1) at (-.35,.25){};
\draw[very thick] (m1)--(n1);
\node[inv] (m2) at (0,0){};
\node[inv, label=90:$2$] (n2) at (0,.25){};
\draw[very thick] (m2)--(n2);
\node[inv] (m3) at (.35,0){};
\node[inv, label=90:$3$] (n3) at (.35,.25){};
\draw[very thick] (m3)--(n3);
\draw[very thick] (A)--(B);
\draw[very thick] (A) to [out = 225, in = 135, looseness=60] (A);
\draw[very thick] (B) to [out = 45, in = 315, looseness=60] (B);
\end{scope}

\end{tikzpicture}
\caption{Four examples of types in the bridge locus $\Tbr_{2,3}$.  The first type is expanded, and its expanding edges are marked in bold. The other three are obtained from it by contracting one or both expanding edges.}
\label{fig:stretching}
\end{figure}

Namely, for each $i\ge 0$, let
\smallskip
$$X_i = \rtno \cup \{\Gamma\in \btno\setminus\rtno : |m^{-1}(P)| + |w^{-1}(P)| \ge i\}$$

\noindent where $P=P(\Gamma)$ is the path in $\Gamma$ defined above.  The condition above says that the number of markings on $P$ plus the total weight supported on $P$ is at least $i$.  Again, the fact that $\Gamma \in \btno\setminus \rtno$ has a virtual cut vertex at all, and hence a well-defined path $P(\Gamma)$, is guaranteed by Lemma~\ref{l:br}. 

Note $X_0 = \btno$.
Now we will give a map, for each $i\ge 0$,
\begin{equation}
\rho_i\col X_i \times [0,1]\lra X_i
\label{eq:rhoi}
\end{equation}
and we will show that it is a strong deformation retract from $X_i$ to $X_{i+1}$.  This will yield that $\btno$ is homotopy equivalent to $X_{n+2}$, and we will verify later that $X_{n+2}$ is contractible.

We define $\rho_i$ as follows.  
First, we declare that $\rho_i$ fixes $X_{i+1}$ pointwise for all time.
Next, for every expanded nonrepeating type $\G=(G,m,w)\in \btno$ with $|m^{-1}(P)| + |w^{-1}(P)| = i$, let $j^+$ be the number of expanding edges in $G$ and $j^- = |E(G)|-j^+$ be the number of shrinking edges.  So $j^+=1$ or $j^+=2$ as observed above.   As an intermediate step to defining $\rho_i$, we define a map
$$\rho_{i,\G}\col \D(\G) \times [0,1] \ra \D(\G)$$
as follows.  Given $(l\col E(G)\ra \RR_{\ge0}) \in \D(\G)$, let 
\begin{equation}\label{eq:c}
c = \min \left \{\,l(e)~|~ e \text{ is a shrinking edge of }G\right\}
\end{equation}
\noindent or set $c=0$ if $G$ has no shrinking edges.  
Then we define
$\rho_{i,\G}(l, t)\col E(G)\rightarrow \RR_{\ge 0}$ by
$$\rho_\G(l, t)(e) = \begin{cases}
l(e)-tc &\text{if $e$ is a shrinking edge} \\
l(e)+tc\frac{j^-}{j^+} &\text{if $e$ is an expanding edge}.
\end{cases}
$$
The map $\rho_{i,\G}$ has the following properties.  It is clearly continuous, and it clearly descends to a well-defined map on tropical curves represented by length functions in $\Delta(\G)$.  Furthermore, it fixes, at all times, a point $l$ if and only if the quantity $c$  in ~\eqref{eq:c} is zero.  

We claim that the maps $\rho_{i,\G}$, along with the map $X_{i+1}\times [0,1]\ra X_{i+1}$ fixing $X_{i+1}$, glue  to give a continuous map $\rho_i\col X_i \times [0,1] \ra X_i$.
To check this, let $\Gamma\in X_i$ be a tropical curve.
First, suppose $\Gamma \in X_{i+1}$.  Suppose $\Gamma \in \iota_\G(\Delta(\G))$, where $\G$ is an expanded, nonrepeating type in $\Tbr_{2,n}$ with 
$$|m^{-1}(P(\G))|+|w^{-1}(P(\G))| = i,$$
and $\iota_\G$ denotes the canonical map $\Delta(\G) \ra \mgno$. Then we want to show that $\Gamma$ is fixed by $\rho_{i,\G}$ for all time.  

Indeed, if $\Gamma = \iota_\G(l)$ for $l\in \Delta(\G)$, then the type $\G'$ underlying $\Gamma$ was obtained from $\G$ by contraction of some edges.  This contraction is nontrivial, since 
$$|m^{-1}(P(\G'))|+|w^{-1}(P(\G'))| > i = |m^{-1}(P(\G))|+|w^{-1}(P(\G))|.$$ 
The key observation is that a contraction involving only the {\em expanding} edges of $\G$ never produces a repeating type, and, it never increases $|m^{-1}(P)|$ or $|w^{-1}(P)|$.  Therefore the contraction producing $\G'$ from $\G$ must involve at least one shrinking edge.  This means that the quantity~\eqref{eq:c} is zero, so that $\Gamma$ is fixed under $\rho_{i,\G}$.  

Second, suppose $\Gamma \in X_i\setminus X_{i+1}$.  So $\Gamma\in\btno$ is a nonrepeating tropical curve with
$$|m^{-1}(P(\Gamma))| + |w^{-1}(P(\Gamma))| = i.$$  We just need to verify that $\Gamma \in \iota_\G(\Delta(\G))$ for a unique expanded, nonrepeating type $\G$ with 
$$|m^{-1}(P(\G))| + |w^{-1}(P(\G))| = i.$$ 
The existence of $\G$ follows from Observation~\ref{o:expanded}, and the fact that contraction of expanding edges does not change $|m^{-1}(P)|$ or $|w^{-1}(P)|$.  The expanded type $\G$ is furthermore unique, as any other uncontraction of $\G'$ either decreases $|m^{-1}(P)|$, by moving a marked point off an endpoint of $P$ and onto a cycle; or decreases $|w^{-1}(P)|$, by creating a new loop at an endpoint of $P$.

We have thus constructed a continuous map $\rho_i\colon X_i\times [0,1]\ra X_i$, with the property that $X_{i+1}$ is fixed.  Furthermore $\rho_i(X_i,1) \subseteq X_{i+1}$.  Indeed let $\Gamma\in X_{i}\setminus X_{i+1}$; then $\rho_i(\Gamma,1)$ is, by construction, obtained by contracting at least one shrinking edge $e$ of $\Gamma.$  Now, it is not possible that both endpoints of $e$ are unmarked, for then $e = P(\Gamma)$ would be the unique expanding edge of $\Gamma$.  Suppose exactly one vertex of $e$ is unmarked; that vertex must be an endpoint of $P(\Gamma)$, say $p_0$.  Since $e$ is not itself a bridge (for then it would be an expanding edge), it must lie on a cycle of $\Gamma$.  Since the vertex of $e$ different from $p_0$ is marked, it follows that contracting $e$ moves a marking onto $P$.  Therefore $\rho_i(\Gamma,1)\in X_{i+1}$ as desired.  Finally, suppose both endpoints of $e$ are marked.  Then either $e$ is a loop and contracting it increases the weight supported on $P$; or, $e$ is a nonloop and contracting it produces a repeated marking.  We conclude that $\rho_i$ is a deformation retract of $X_i$ onto $X_{i+1}$.

Now, stringing together the maps $\rho_i$ produces a deformation retract of $\btno$ onto $X_{n+2}$.  We claim $X_{n+2}$ is contractible, and this will finish the proof.  Indeed $X_{n+2}$ consists of $\rtno$ along with all nonrepeating tropical curves obtained from the one-edged curve (VI) in Figure~\ref{f:genus2} by adding marked points at $n$ distinct places.  Thus if $n=0$ or $n=1$ then $X_{n+2}$ is obviously contractible.  
If $n\ge 2$, then one may retract $X_{n+2}$ onto $\rtno$ in a similar fashion to the argument above.  We consider all nonrepeating expanded types $\G$ of curves in $X_{n+2}$, and consider the map that expands the first and last edges of the path $P(\G)$ and shrinks the interior edges of $P(\G)$.  Note that there is at least one interior edge since $n\ge 2$ by assumption.  These maps $ \Delta(\G)\times[0,1]\ra \Delta(\G)$ glue together just as above into a deformation retract of $X_{n+2}$ onto $\rtno$.  Finally $\rtno$ is contractible, by \cite{cgp}.

\end{proof}

\section{A CW structure on the quotient by the bridge locus}

We will take $g=2$ throughout the rest of this paper.  Having shown that $\btno$ is a contractible subspace of $\mtno$ in Theorem~\ref{t:bridge}, we now study the quotient
$$\dtn :=\mtno/\btno.$$
Now, the space $\mtno$ can certainly be given a CW complex structure in which $\btno$ is a subcomplex.  For example, once can first take a barycentric subdivision of the simplices $\D(\G)$.  Then $\mtno$ is in fact a $\Delta$-complex on the barycentric cells.  (It is not a {\em simplicial} complex on the barycentric cells unless $n=0$.  For an example, consider the triangle corresponding to the graph with vertices $\{x,y\}$, edges $\{xy, xy, yy\}$, no vertex weights, and a single marking at $x$.)  In any case, by e.g.~\cite[Proposition 0.17, A.3]{hatcher}, we have:
\begin{obs}\label{o:hausdorff}
$\dtn$ is a CW complex, hence Hausdorff. By Theorem~\ref{t:bridge}, it is homotopy equivalent to $\mtno$.
\end{obs}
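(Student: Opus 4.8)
The plan is to combine three ingredients: first, endow $\mtno$ with a CW structure in which $\btno$ is a subcomplex; second, invoke the standard fact that the quotient of a CW complex by a subcomplex is again a CW complex, and that CW complexes are Hausdorff; third, apply the standard fact that collapsing a contractible subcomplex of a CW complex is a homotopy equivalence, using Theorem~\ref{t:bridge} for the contractibility of $\btno$.

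For the first ingredient, the naive approach --- declaring the simplices $\D(\G)$ themselves to be the cells --- fails, because a nontrivial automorphism of a combinatorial type $\G$ may identify one face of $\D(\G)$ with another face of the same simplex, or fold $\D(\G)$ onto itself; the resulting gluing data is then not that of a CW (or even $\Delta$-)complex. The standard remedy is to pass to the barycentric subdivision of every simplex $\D(\G)$. After this subdivision each cell of $\mtno$ is embedded and has trivial automorphism group, so $\mtno$ acquires the structure of a $\Delta$-complex --- it is still not simplicial in general, as the parenthetical example in the text shows, but the folding that occurs at the level of vertices does not obstruct the $\Delta$-complex structure. I would then note that $\btno$ is the union of the closed simplices $\D(\G)$ with $\G\in \Tbr_{2,n}$: since $\Tbr_{2,n}$ is by construction closed under the contraction operation (Definition~\ref{d:br}), the bridge locus is glued up from precisely these closed simplices, and hence is a subcomplex of the barycentric subdivision of $\mtno$.

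For the second and third ingredients I would simply cite the relevant results from \cite{hatcher}: that for a CW pair $(X,A)$ the quotient $X/A$ carries a natural CW structure (hence is Hausdorff), and that if in addition $A$ is contractible then the quotient map $X\ra X/A$ is a homotopy equivalence --- the latter because a CW pair has the homotopy extension property. Applying these facts to the pair $(\mtno,\btno)$ and using that $\btno$ is contractible by Theorem~\ref{t:bridge} yields all three assertions about $\dtn$ at once. The only point that requires genuine care is the construction in the first ingredient, namely checking that barycentric subdivision really does produce a legitimate $\Delta$-complex so that the cited machinery applies, and that $\btno$ sits inside it as an honest subcomplex; but neither of these should present a serious obstacle, since the face maps among the $\D(\G)$ are already under control from Section~2 and the closure of $\Tbr_{2,n}$ under contraction is immediate from the definitions. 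After that, Hausdorffness and the homotopy equivalence are formal.
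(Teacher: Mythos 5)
Your proposal is correct and follows essentially the same route as the paper: barycentric subdivision of the simplices $\D(\G)$ to obtain a $\Delta$-complex structure on $\mtno$ with $\btno$ as a subcomplex, followed by the standard facts from \cite{hatcher} that quotients of CW pairs are CW (hence Hausdorff) and that collapsing a contractible subcomplex is a homotopy equivalence, with Theorem~\ref{t:bridge} supplying the contractibility. The paper states this in exactly the same (terse) form, citing \cite[Proposition 0.17, A.3]{hatcher}.
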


We will immediately dispense with the barycentric cell structure on $\mtno$, because it is impractical.  Instead, in this section we will equip   $\dtn$ with a CW structure that is useful for  computations.  It will take advantage of the fact that the tropical curves in the part of $\mtno$ that has not been collapsed have very limited automorphisms, namely, as long as $n\ge 4$, at most a single transposition of two parallel edges.  So, in what follows, we take $n\ge 4.$  The cases $n\le 3$ are small enough to be handled directly, as in Section~\ref{s:mainproof}.

Write $\TTh_{2,n}$ for those nonrepeating types $\G \in T_{2,n}$ whose underlying type is a theta graph, i.e.~of type I in Figure~\ref{f:genus2}.  We call these {\em theta types}.  We emphasize that by definition they must be nonrepeating.  See Figure~\ref{fig:createparallel} for some examples.    

\begin{lemma}\label{l:thetasleft}
We have
$$T_{2,n} = \Tbr_{2,n} \amalg \TTh_{2,n}.$$  
\end{lemma}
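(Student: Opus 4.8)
The plan is to reduce everything to Lemma~\ref{l:br}, which identifies $\Tbr_{2,n}$ with the union of the repeating types and the types possessing a virtual cut vertex. Since $\TTh_{2,n}$ consists by definition of nonrepeating types, the claimed decomposition is equivalent to the assertion that, among nonrepeating types in $T_{2,n}$, having no virtual cut vertex is the same as having theta underlying type. So there are two inclusions to prove: a theta type has no virtual cut vertex (which gives $\Tbr_{2,n} \cap \TTh_{2,n} = \varnothing$), and a nonrepeating type with no virtual cut vertex is a theta type (which gives $\Tbr_{2,n} \cup \TTh_{2,n} = T_{2,n}$).

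For the first inclusion, let $\G=(G,m,w)$ be a theta type. Its underlying type is the theta graph, which already has genus $2$, so all vertex weights vanish and $G^w = G$. Because $\G$ is nonrepeating, $G$ can carry no dangling tree: a leaf of such a tree would have weight $0$ and valence $1$, hence would need at least two markings to satisfy the stability condition~\eqref{eq:stability}, forcing a repeated marking. Therefore $G$ is exactly the theta graph with some of its edges subdivided by (necessarily marked) $2$-valent vertices, and a direct check shows such a graph has no cut vertex. By Lemma~\ref{l:br}, $\G \notin \Tbr_{2,n}$.

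For the second inclusion, suppose $\G = (G,m,w) \in T_{2,n}$ is nonrepeating with no virtual cut vertex, i.e. $G^w$ has no cut vertex. First I would argue that $w \equiv 0$: if $w(v) > 0$ then $G^w$ has a loop at $v$, and since $G^w$ is connected of genus $2$, removing $v$ separates this loop from the rest of $G^w$ (or $G^w = R_2$, which has a cut vertex), contradicting the hypothesis. Hence $G^w = G$ is a connected graph with first Betti number $2$ and no cut vertex; in particular $G$ has no leaf and no bridge, since a leaf's neighbour or a bridge's endpoint would be a cut vertex. Consequently every edge of $G$ lies on a cycle, so the underlying type of $\G$ is simply $G$ with its $2$-valent vertices suppressed, and it is one of the seven types of Figure~\ref{f:genus2}. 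As types II--VII all have a virtual cut vertex, the underlying type must be I, so $\G \in \TTh_{2,n}$.

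The one genuinely combinatorial point --- and where I expect the main effort to go --- is the classification underlying the last step: a connected graph with $b_1 = 2$ and no cut vertex has the theta graph as its topological type. I would prove this by a short degree count: after suppressing $2$-valent vertices every vertex has degree at least $3$, while $b_1 = 2$ gives $|E| = |V|+1$ and $\sum_v \deg(v) = 2|V|+2$, forcing $|V| \le 2$; the one-vertex case is $R_2$ and the two-vertex case is either the theta graph or the dumbbell (two loops joined by an edge), and of these only the theta graph has no cut vertex. Alternatively one can simply invoke the observation, recorded after the definition of virtual cut vertex, that all of types II--VII in Figure~\ref{f:genus2} have one. The remaining ingredients --- stability forcing marked leaves, and the stability of the contraction operation noted in Section~2 --- are routine.
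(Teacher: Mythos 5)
Your proof is correct and follows the same route as the paper, which simply observes that by Lemma~\ref{l:br} the statement reduces to noting that, of the seven unmarked types in Figure~\ref{f:genus2}, all but the theta graph have a virtual cut vertex. You have merely filled in the details (stability ruling out dangling trees, $w\equiv 0$, and the degree-count classification of bridgeless genus-$2$ graphs) that the paper treats as immediate from its stated classification of $T_{2,0}$.
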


\begin{proof}
This is immediate from Lemma~\ref{l:br}, noting that of the seven types in $T_{2,0}$, shown in Figure~\ref{f:genus2}, all but type I have a virtual cut vertex.
\end{proof}

Now we characterize the automorphisms of theta types.  Note that every vertex of a theta type must have weight zero, so we write $\G=(G,m,0) = (G,m)$ for short.
\begin{lemma}\label{c:auts}
Let $n\ge 4$ and let $\G = (G,m) \in \TTh_{2,n}$.
Then $\Aut(\G)$ is either trivial or is generated by a single transposition of parallel edges.
\end{lemma}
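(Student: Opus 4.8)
The plan is to analyze the structure of a theta type $\G = (G,m) \in \TTh_{2,n}$ directly from the fact that its underlying graph is a theta graph. Recall a theta graph has two vertices, call them $u$ and $v$, joined by three edges; subdividing these edges and hanging trees off the subdivision points builds $G$, with $m$ distributing the $n$ markings. First I would reduce to the core combinatorial situation: since $\G$ is nonrepeating and all vertex weights are zero, the stability condition~\eqref{eq:stability} forces every valence-2 vertex to carry a marking, and in particular the only way $G$ can have a nontrivial automorphism is by permuting the three ``strands'' between $u$ and $v$ (an automorphism cannot move markings, since $m$ is injective, so it must fix the multiset of marking labels on each strand and can only fix or swap $u$ and $v$). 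This immediately cuts $\Aut(\G)$ down to a subgroup of the $\ZZ/2\ZZ \times S_3$ acting by swapping $\{u,v\}$ and permuting strands.

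Next I would rule out everything except a single transposition of parallel edges. The key point is a counting argument using $n \ge 4$. Each strand is either a single edge (a ``bare'' strand) or a subdivided path carrying at least one marking; two strands can be identified by an automorphism only if they are isomorphic as marked paths, which — because $m$ is injective — forces both to be bare edges incident to the same pair of vertices, i.e.\ parallel edges in the sense of \S\ref{ss:graphs}. So any nontrivial automorphism that fixes $u$ and $v$ is a product of transpositions of pairs of parallel bare strands; but there are only three strands, so at most one such pair exists (three mutually parallel bare edges would make $G$ itself, before adding markings, but then all $n$ markings sit on $u$ and $v$, and for $n \ge 4$ this forces a repeated marking on one of them, contradicting nonrepeating — more carefully, with three bare strands $G$ has just the two vertices $u,v$, valence $3$ each, so stability permits at most the markings and we'd need $m$ injective into a $2$-element set, impossible for $n \ge 4$). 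Hence at most one pair of strands is swappable, giving at most one transposition of parallel edges.

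Finally I would handle the automorphisms that swap $u$ and $v$. Such an automorphism $\sigma$ must send each strand to a strand, reversing its orientation; since $m$ is injective, the reversed marked path must equal some marked path, so either $\sigma$ fixes a strand setwise (reversing it) or swaps two strands. A strand fixed-and-reversed by $\sigma$ would need to read the same marking sequence forwards and backwards; with distinct labels this is only possible if the strand has a single interior marked vertex (palindrome of length one) or is a bare edge — and a bare edge fixed setwise while $u \leftrightarrow v$ is swapped is fine, but then $\sigma$ also acts on the other two strands, and chasing the injectivity of $m$ through again shows $\sigma$ is forced to act on markings nontrivially unless essentially no markings are present, again contradicting $n \ge 4$. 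The cleanest way to organize this is: show first that $\Aut(\G)$ fixes $u$ and $v$ (using $n \ge 4$ to exclude the swap), then apply the strand-counting argument above. The main obstacle is the bookkeeping for the $u \leftrightarrow v$ case — one has to be careful that a single long strand carrying several markings could a priori be preserved by a reflection, and the hypothesis $n \ge 4$ (rather than, say, $n \ge 2$) is exactly what is needed to kill the remaining small exceptional configurations, so I would lean on Figure~\ref{fig:createparallel} and a short case check on how few edges/markings a theta type can have.
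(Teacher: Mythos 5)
Your proposal is correct, but it organizes the argument differently from the paper. You first observe that any automorphism acts on the theta skeleton, embedding $\Aut(\G)$ into $(\ZZ/2\ZZ)\times S_3$ (swap of the two $3$-valent vertices times permutation of the three strands), and then rule out cases; the paper instead splits on whether some edge of $\Theta$ has its interior marked twice (in which case that path, hence both $3$-valent vertices, is fixed pointwise and only a swap of two bare parallel strands can survive) or every interior is marked at most once (in which case $n\ge 4$ forces a marked $3$-valent vertex and at least two marked strands, rigidifying everything). The mathematical content is the same — injectivity of $m$ forces any two identified strands to be unmarked, and $n\ge4$ kills the exceptional configurations — but your route has to separately dispose of the $u\leftrightarrow v$ swap, which the paper's case split sidesteps. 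Your treatment of that step is the only loose point: ``chasing the injectivity of $m$'' can be closed cleanly by noting that a swap forces both $3$-valent vertices to be unmarked (a marked vertex cannot be sent to a differently-marked one), each strand fixed by the swap to carry at most one (centrally placed) marking by your palindrome observation, and each pair of interchanged strands to be bare, so $n\le 3$, contradicting $n\ge4$. With that completion your argument is a full proof; what it buys is a cleaner a priori bound on $\Aut(\G)$, at the cost of being slightly longer than the paper's two-case analysis. (Your passing remark about ``hanging trees off the subdivision points'' is harmless but unnecessary: a nonrepeating stable type with theta underlying type has no trees, since a weight-zero leaf would need two markings.)
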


\begin{proof}
Consider first an unmarked graph of type I, which we will henceforth call $\Theta$ as a memory aid.  Let $e_1,e_2,e_3$ denote the three edges of $\Theta$. Now think
of constructing $\G$ by adding $n$ markings at distinct points anywhere on $\Theta$, i.e.~either on the two vertices of $\Theta$ or in the interiors of the three edges.  Formally, of course, adding a marked point to the interior of an edge should be viewed as first subdividing the edge, then marking the new vertex.

First suppose some $e_i$ had its interior marked twice.  Then any automorphism of $\G$ must fix pointwise the path corresponding to $e_i$, including its endpoints.  Then $\G$ has at most an automorphism swapping parallel edges: this automorphism would occur precisely if all $n$ markings were to lie on a single $e_i$.  

Now suppose instead that each $e_i$ has its interior marked at most once.  Then $n\ge 4$ implies that one or both of the vertices of $\Theta$ were marked.  Thus any automorphism of $\G$ must fix its two 3-valent vertices.  Furthermore, at least two of the edges of $\Theta$, say $e_1$ and  $e_2$, have their interiors marked; so any automorphism of $\G$ must fix the two paths corresponding to $e_1$ and $e_2$.  Hence all of $\G$ must be fixed.  So in this case, $\G$ has trivial automorphism group.

\end{proof}

\begin{definition}\label{d:dec}
Let $\G =(G,m)\in \TTh_{2,n}$ with $n\ge 4$.  
\begin{enumerate}
	\item A {\bf decoration} $\delta$ on $\G$ is a choice, for any pair of parallel edges $e$ and $ e'$ in $G$, of a formal relation 
$$e \le_\delta e' \qquad\textrm{or}\qquad e =_\delta e' \qquad\textrm{or}\qquad e \ge_\delta e'.$$  
\item A {\bf decorated type} is a pair $(\G,\delta).$  If $\Aut(\G)$ is trivial, then $\delta$ is trivial and we may simply write $\G$ instead of $(\G,\delta)$.
An isomorphism of decorated types $(\G,\delta)$ and $(\G', \delta')$ is an isomorphism $\G \cong \G'$ carrying $\delta$ to $\delta'$.    
Write $\DTT_{2,n}$ for the set of (isomorphism classes of) decorated types.
\item For each $(\G, \delta)\in \DTT_{2,n}$, we let 
$$\D(\G,\delta) \subset \RR^{E(G)}$$
be the polytope parametrizing those $l\col E(G)\rightarrow \RR_{\ge 0} \in \D(\G)$  such that if~$e$ and $e'$ are parallel  then  $l(e)$ and $l(e')$ satisfy the relation given by $\delta$.
\end{enumerate}
\end{definition}

 For any $e\in E(G)$, write $l_e$ for the indicator length function for $e$, i.e.~$l_e(e) = 1$ and $l_e(e') = 0$ for $e'\ne e.$  

\begin{lemma}\label{l:faces}
Let $(\G,\delta)\in \DTT_{2,n}$ be any decorated type.
\begin{enumerate}
	\item \label{it:dimdec}
	$\D(\G,\delta)$ is a simplex of dimension $|E(G)| - 1 - $ \#equal signs in $\delta.$
	\item \label{it:vertdec}
	The vertices of $\D(\G,\delta)$ are
	$$\left\{l_e~|~ e\text{ is a singleton or } e \ge_\delta e'\right\} \cup \left\{\frac{1}{2}(l_e + l_{e'})~|~ e =_\delta e'\text{ or }e \le_\delta e'\right\}.$$
	\item \label{it:facetdec}
	The facets of $\D(\G,\delta)$ are given by equations
\begin{eqnarray*}
\left\{l(e)=0 ~|~ e\text{ is a singleton or } e \le_\delta e'\right\} &\cup& \left\{l(e)=l(e')= 0~|~ e =_\delta e'\right\} \\
&\cup& \left\{l(e) = l(e')~|~ e \ge_\delta e'\right\}.
\end{eqnarray*}
\item \label{it:relint} A point $l\col E(G)\ra \RR_{\ge 0}$ is in the relative interior of $\D(\G,\delta)$ if and only if $l(e) \ne 0$ for each $e\in E(G)$, and further
$$ l(e) < l(e') \quad \text{if} \quad e\le_\delta e' \qquad \text{and} \qquad l(e) = l(e') \quad \text{if}\quad  e =_\delta e'.$$
\end{enumerate}
\end{lemma}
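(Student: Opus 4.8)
The plan is to reduce $\D(\G,\delta)$ to one of three completely explicit polytopes and then read off all four assertions from barycentric coordinates. The first step is the structural observation that, for $n\ge 4$, the graph $G$ has \emph{at most one} pair of parallel edges. Indeed, write the underlying theta graph $\Theta$ with edges $e_1,e_2,e_3$ and let $k_i\ge 1$ be the number of segments into which $e_i$ is subdivided in $G$. Every subdivision vertex has valence $2$ and so must carry a marking, while the two trivalent vertices absorb at most two markings between them (nonrepeating!), so $\sum_i(k_i-1)\ge n-2\ge 2$; hence not all $k_i$ equal $1$, and the only candidates for parallel edges are the $e_i$ with $k_i=1$, of which there are at most two. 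Thus either $\delta$ is trivial, in which case $\D(\G,\delta)=\D(\G)$ is the standard simplex of dimension $|E(G)|-1$ on the vertices $\{l_e\}_{e\in E(G)}$ and every edge is a singleton, so that \ref{it:dimdec}--\ref{it:relint} are exactly the textbook facts about a simplex; or $G$ has a unique parallel pair $\{e,e'\}$ and, since the claimed formulas are symmetric under interchanging $e$ and $e'$ (with $\le_\delta$ and $\ge_\delta$ swapped), we may assume $\delta$ imposes either $e\le_\delta e'$ or $e=_\delta e'$. Set $N=|E(G)|$ and let $H\subset\RR^{E(G)}$ be the hyperplane $\{l(e)=l(e')\}$.

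For the case $e\le_\delta e'$, I would first check that $H$ contains every vertex $l_f$ of $\D(\G)$ with $f\notin\{e,e'\}$ and meets the edge $[l_e,l_{e'}]$ of $\D(\G)$ in its midpoint $m:=\tfrac12(l_e+l_{e'})$, but meets the relative interior of no other edge of $\D(\G)$ — along an edge $[l_f,l_{f''}]$ with $\{f,f''\}\ne\{e,e'\}$ the functional $l(e)-l(e')$ is weakly monotone and vanishes only at an endpoint. Hence intersecting $\D(\G)$ with the half-space $\{l(e)\le l(e')\}$ merely deletes the vertex $l_e$ and introduces the single new vertex $m$, so $\D(\G,\delta)=\mathrm{conv}(\{l_f:f\ne e\}\cup\{m\})$. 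To see this is an $(N-1)$-simplex I would exhibit, for each of its points, the \emph{unique} representation $l=\sum_{f\ne e}t_f\,l_f+s\,m$ with nonnegative coefficients summing to $1$, by solving $s=2\,l(e)$, $t_{e'}=l(e')-l(e)$, and $t_f=l(f)$ for $f\notin\{e,e'\}$; this forces affine independence of the $N$ listed points, giving \ref{it:dimdec} and the vertex list in \ref{it:vertdec} (with $l_{e'}$ the vertex coming from $e'\ge_\delta e$ and $m$ the one coming from $e\le_\delta e'$). Setting each barycentric coordinate to $0$ in turn yields the facets: $t_f=0\Leftrightarrow l(f)=0$, $s=0\Leftrightarrow l(e)=0$, and $t_{e'}=0\Leftrightarrow l(e)=l(e')$, which is \ref{it:facetdec}; and the relative interior, where all coordinates are strictly positive, is exactly $\{l(f)\ne 0\text{ for all }f\}\cap\{l(e)<l(e')\}$, which is \ref{it:relint}.

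The case $e=_\delta e'$ is the same computation carried out inside $H$: since $H$ contains the face of $\D(\G)$ spanned by $\{l_f:f\ne e,e'\}$ together with $m$, and misses the remaining vertices, $\D(\G,\delta)=\D(\G)\cap H=\mathrm{conv}(\{l_f:f\ne e,e'\}\cup\{m\})$, an $(N-2)$-simplex, giving \ref{it:dimdec} and \ref{it:vertdec}; writing a point as $l=\sum_{f\ne e,e'}t_f\,l_f+s\,m$ and setting coordinates to zero gives the facets $l(f)=0$ and $l(e)=l(e')=0$ for \ref{it:facetdec}, while positivity of all coordinates — i.e.\ $l(f)\ne 0$ for every $f$, with $l(e)=l(e')$ holding automatically — gives \ref{it:relint}. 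The only step requiring genuine care is the geometric claim in the $\le_\delta$ case that $H$ crosses the interior of exactly one edge of $\D(\G)$, so that passing to $\D(\G,\delta)$ removes one vertex and adds one; once that is in hand, everything else is routine bookkeeping with the explicit barycentric coordinates above.
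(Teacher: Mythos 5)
Your proof is correct and follows essentially the same route as the paper's: both rest on exhibiting each point of $\D(\G,\delta)$ as a unique nonnegative affine combination of the listed candidate vertices and then reading off the facets and the relative interior from the resulting barycentric coordinates. Your preliminary reduction --- that a nonrepeating theta type with $n\ge 4$ markings has at most one parallel pair, so only three shapes of $\delta$ occur --- is a correct extra observation (consistent with Lemma~\ref{c:auts}) that the paper's uniform argument does not require, but it makes the bookkeeping completely explicit.
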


\begin{proof}
From Definition~\ref{d:dec}, it follows that the points in $\Delta(\G,\delta)$ are exactly those that can be expressed as a convex combination of the finitely many points listed in part~\eqref{it:vertdec}, and that this expression is unique if it exists.  
Thus $\D(\G,\delta)$ is a simplex.  It has a vertex for every singleton $e$, two vertices for every $e\parallel e'$ with $e\le_\delta e'$ or $e\ge_\delta e'$, and a single vertex for every $e\parallel e'$ with $e=_\delta e'$.  Thus the number of vertices in $\D(\G,\delta)$ is $|E(G)|~-$ \#equal signs in $\delta$.  
Next, one may check that each of the linear slices of $\D(\G,\delta)$ listed in~\eqref{it:facetdec} contains all vertices but one, and that they are all distinct.  Finally, the points described in~\eqref{it:relint} are precisely those points of $\D(\G,\delta)$ not contained in any facet.
\end{proof}

Let $n\ge 4.$  We now describe a CW structure on $\dtn = \mtno/\btno$.  We will give a finite number of maps of simplices into $\dtn$, and then prove that they are characteristic maps.

First, let $\pt$ be a $0$-simplex, i.e.~a point, and consider the map $\phi_{\text{br}}\col \pt\ra \dtn$ sending $\pt$ to the collapse of $\btno$.  Next, for each $(\G,\delta)\in \DTT_{g,n}$, consider the map 
$$\phi_{\G,\delta}\col \D(\G,\delta)\lra \D(\G) \lra \mtno \lra \dtn$$
that factors through the inclusion of polytopes $\D(\G,\delta)\subseteq \D(\G)$ and the canonical map $\iota_\G\col \D(\G) \ra \mtno$.

\begin{prop}\label{p:cw}
Let $n\ge 4.$  The maps $\phi_{\text{br}}$ and $\{\phi_{\G,\delta}~|~ (\G,\delta)\in \DTT_{2,n}\}$
are the characteristic maps for a CW complex structure on $\dtn$. 
\end{prop}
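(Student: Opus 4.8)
The plan is to verify the three defining properties of a CW structure (cf.\ \cite[Chapter 0]{hatcher}): that the images of the characteristic maps partition $\dtn$, that each restricts to a homeomorphism from the open cell onto its image, and that the boundary of each cell lands in lower-dimensional cells, with the weak topology being automatic since the excerpt already observes (Observation~\ref{o:hausdorff}) that $\dtn$ is a CW complex under the barycentric subdivision and in particular is Hausdorff. First I would recall from Lemma~\ref{l:thetasleft} that $T_{2,n} = \Tbr_{2,n} \amalg \TTh_{2,n}$, so that, after collapsing $\btno$, every point of $\dtn$ other than $\pt$ is the image of a point of $\iota_\G(\Delta(\G))$ for a \emph{unique} theta type $\G$ (uniqueness of $\G$ because distinct nonrepeating types have disjoint relative-interior images in $\mtno$, and no theta type is a contraction of another theta type since contracting an edge of a theta graph destroys genus-$2$-ness or produces a type with a virtual cut vertex). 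Within a fixed $\Delta(\G)$, the relative interiors of the polytopes $\Delta(\G,\delta)$, as $\delta$ ranges over the finitely many decorations, partition $\Delta(\G)$: a length function $l$ with all $l(e)\neq 0$ determines, for each parallel pair $e\parallel e'$, exactly one of $l(e)<l(e')$, $l(e)=l(e')$, $l(e)>l(e')$, and by Lemma~\ref{l:faces}\eqref{it:relint} this is precisely the data of which open cell $\operatorname{relint}\Delta(\G,\delta)$ contains $l$. Combining, the open cells $\pt$ and $\{\phi_{\G,\delta}(\operatorname{relint}\Delta(\G,\delta))\}$ partition $\dtn$.

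Next I would check that each $\phi_{\G,\delta}$ is injective on $\operatorname{relint}\Delta(\G,\delta)$, hence a homeomorphism onto its image by compactness. Here is where Lemma~\ref{c:auts} is essential: the only possible nontrivial automorphism of $\G$ is a transposition $\tau$ of a parallel pair $e\parallel e'$, and the map $\iota_\G$ identifies $l$ with $\tau\cdot l$. But a decoration $\delta$ picks out an \emph{ordered} relation between $e$ and $e'$ (say $e\le_\delta e'$), and $\tau\cdot l$ lies in $\operatorname{relint}\Delta(\G,\delta')$ for the decoration $\delta'$ with the relation reversed; when $e=_\delta e'$ the transposition fixes $l$. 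So on each $\operatorname{relint}\Delta(\G,\delta)$ the map $\iota_\G$, and hence $\phi_{\G,\delta}$, is injective. This is the step I expect to be the main obstacle: one must be careful that when $\G$ admits the transposition $\tau$, a decoration with an equality sign $e=_\delta e'$ is genuinely fixed by $\tau$ — so $\Delta(\G,\delta)$ maps homeomorphically — while the two strict decorations $e\le_\delta e'$ and $e\ge_\delta e'$ are swapped by $\tau$ but are listed as \emph{distinct} elements of $\DTT_{2,n}$ only if they are non-isomorphic, which by definition of isomorphism of decorated types they are not; so in fact when $\tau$ exists the two strict decorations on that pair give the \emph{same} element of $\DTT_{2,n}$, and I should make sure the enumeration in $\DTT_{2,n}$ is set up so that $\phi_{\G,\delta}$ is well-defined and that no open cell is hit twice. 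Concretely, I would fix once and for all, for each $\G$ carrying $\tau$, a choice between the two strict relations on its transposable pair, so that $\DTT_{2,n}$ has one representative per $\Aut(\G)$-orbit of decorations; then the partition claim above is correct as stated.

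Finally I would verify the boundary condition. By Lemma~\ref{l:faces}\eqref{it:facetdec} the facets of $\Delta(\G,\delta)$ are of three kinds: setting $l(e)=0$ for a singleton or an $e$ with $e\le_\delta e'$; setting $l(e)=l(e')=0$ for $e=_\delta e'$; or setting $l(e)=l(e')$ for $e\ge_\delta e'$. In the first two cases, $\phi_{\G,\delta}$ carries the facet into $\iota_{\G/S}(\Delta(\G/S,\delta/S))$ for the nonempty edge set $S$ being contracted; by Lemma~\ref{l:thetasleft}, $\G/S$ is either a theta type (of strictly fewer edges, so a lower-dimensional cell) or lies in $\Tbr_{2,n}$, in which case that face maps entirely into the collapsed point $\pt$. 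In the third case the facet $\{l(e)=l(e')\}$ maps into $\operatorname{relint}\Delta(\G,\delta'')$ where $\delta''$ agrees with $\delta$ except that $e\ge_\delta e'$ is replaced by $e=_{\delta''}e'$; by Lemma~\ref{l:faces}\eqref{it:dimdec} this has one more equality sign, hence dimension one less. In every case the image of $\partial\Delta(\G,\delta)$ meets only cells of strictly smaller dimension, and an induction on dimension packages these restrictions into genuine attaching maps. Since $\dtn$ is a quotient of the compact-cell complex $\mtno$ by the subcomplex $\btno$, it carries the quotient (= weak) topology with respect to these cells, completing the verification.
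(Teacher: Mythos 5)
Your proposal is correct and follows essentially the same route as the paper's proof: the partition into open cells via Lemma~\ref{l:faces}\eqref{it:relint} and Lemma~\ref{l:thetasleft}, the homeomorphism on interiors via compactness of the simplices together with the Hausdorff property from Observation~\ref{o:hausdorff} (with the automorphism subtlety handled exactly as you describe, $\DTT_{2,n}$ being defined as isomorphism classes of decorated types), and the boundary condition via the three-way facet analysis of Lemma~\ref{l:faces}\eqref{it:facetdec}. The only substantive difference is that the paper's proof additionally records the signed contribution of each facet to the cellular boundary maps (in particular the cancellation that occurs when contracting an edge creates a new parallel pair), which is needed for the computations in later sections but not for the proposition itself.
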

\begin{proof}
We will show two claims:
\begin{enumerate}
	\item the maps $\phi_{\text{br}}$ and $\{\phi_{\G,\delta}\}$, restricted to the interiors of their domains, are homeomorphisms onto their images.  We call the images of these interiors {\em cells} and denote them  $\eps_\text{br}$ and $\eps_{\G,\delta}$.  Furthermore, we have 
	\begin{equation} \label{eq:setequality}
\dtn = \left( \coprod \eps_{\G,\delta} \right)\amalg (\eps_\text{br})
\end{equation}
	as an equality of sets.
	\item For each $(\G,\delta)$, we have that $\phi_{\G,\delta}(\partial \D(\G,\delta))$ is contained in a union of cells of smaller dimension.
\end{enumerate}
Then the proposition follows immediately \cite[Proposition A.2]{hatcher}.  Actually we will prove more in the process: we will analyze the contribution that each facet of a cell $\D(\G,\delta)$ makes to the boundary maps in the cellular chain complex. In order to talk about the chain maps, we shall assume that the vertices of each simplex $\Delta(\G,\delta)$ are equipped with a total ordering, and the faces of $\D(\G,\delta)$ are given the corresponding orientation.

Let us start by proving (1).  The point $\pt$ is mapped in $\dtn$ to the collapse of $\btno$.  Next, if $(G,l,m,w)$ is a tropical curve in $\mtno$ of theta type, then by Lemma~\ref{l:faces}\eqref{it:relint}, there is a unique decoration $\delta$ of $\G=(G,m,w)$ for which $l$ lies in the relative interior of $\D(\G,\delta)$.  
So equation~\eqref{eq:setequality} holds.  Now for any $(\G,\delta) \in \DTT_{2,n}$, consider the map
$$\D(\G,\delta)/\!\sim ~\lra \dtn$$
induced from identifying points in $\D(\G,\delta)$ in the same fiber of $\phi_{\G,\delta}$.  This map is an injection with compact domain and Hausdorff codomain (Observation~\ref{o:hausdorff}), so it is a homeomorphism onto its image.  
We already argued that $\sim$ is trivial on the interior of $\D(\G,\delta)$, so $\phi_{\G,\delta}$ restricts to a homeomorphism on the interior of $\D(\G,\delta)$.  This proves (1).

For part (2),  let $(\G=(G,m,w),\delta)\in \DTT_{2,n}$ and let $F$ be a facet of $\D(\G,\delta)$.  
By Lemma~\ref{l:faces} we have the following three cases.

{\bf Case 1:} $F$ is defined by $l(e) = 0$ for some $e\in E(G)$.  

First, if $\G/e \in \Tbr_{2,n}$, then in $\dtn$, we have that $F$ is collapsed to the 0-cell $\eps_{\text{br}}$.  Note that $F$ does not contribute to the boundary map, because the codimension of $\eps_{\text{br}}$ is too large.

So suppose that $\G/e \not \in \Tbr_{2,n}$.  Then $\G/e \in \TTh_{2,n}$ by Lemma~\ref{l:thetasleft}.  We have two possibilities.  
The first possibility is that $\G$ had no parallel edges, but $\G/e$ does, say edges $f_1, f_2\in E(G/e)$. 
See Figure~\ref{fig:createparallel}.

\begin{figure}
\begin{tikzpicture}[my_node/.style={fill, circle, inner sep=1.75pt}, scale=1.25]
\begin{scope}
\node[my_node] (A) at (-.5,0){};
\node[my_node] (B) at (.5,0){};
\draw[very thick] (A)--(B);
\draw[very thick] (A) to [out=-90,in=-90,looseness=1.25] (B);
\draw[very thick] (A) to [out=90,in=90,looseness=1.25] (B);
\draw[very thick] (-.35,.35) -- (-.45,.55);
\draw (-.55,.65) node {${\scriptstyle 1}$};
\draw[very thick] (.35,.35) -- (.45,.55);
\draw (.55,.65) node {${\scriptstyle 3}$};
\draw[very thick] (0,.43) -- (0,.65);
\draw (0,.75) node {${\scriptstyle 2}$};
\draw[very thick] (0,-.43) -- (0,-.65);
\draw (0,-.75) node {${\scriptstyle 4}$};
\draw (-.65,.2) node {${\scriptstyle e_0}$};
\draw (-.2,.53) node {${\scriptstyle e_1}$};
\draw (.2,.53) node {${\scriptstyle e_2}$};
\draw (.65,.2) node {${\scriptstyle e_3}$};
\draw (0,.1) node {${\scriptstyle e_4}$};
\draw (-.35,-.5) node {${\scriptstyle e_5}$};
\draw (.35,-.5) node {${\scriptstyle e_6}$};
\end{scope}
\begin{scope}[shift = {(3,0)}]
\node[my_node] (A) at (-.5,0){};
\node[my_node] (B) at (.5,0){};
\draw[very thick] (A)--(B);
\draw[very thick] (A) to [out=-90,in=-90,looseness=1.25] (B);
\draw[very thick] (A) to [out=90,in=90,looseness=1.25] (B);
\draw[very thick] (-.35,.35) -- (-.45,.55);
\draw (-.55,.65) node {${\scriptstyle 1}$};
\draw[very thick] (.35,.35) -- (.45,.55);
\draw (.55,.65) node {${\scriptstyle 3}$};
\draw[very thick] (0,.43) -- (0,.65);
\draw (0,.75) node {${\scriptstyle 2}$};
\draw[very thick] (0.5,0) -- (0.75,0);
\draw (.85,0) node {${\scriptstyle 4}$};
\end{scope}
\end{tikzpicture}
\caption{Two examples of cyclic theta types in $\Tsig_{2,4}$ for $\sig = (1~2~3~4)$, of form $\cyc{3}{1}$ and $\cycr{3}{0}$ respectively.  Our convention is that the edges are ordered in reading order, as shown on the left.  The type on the right is obtained by contracting edge $e_6$; note that this operation creates a new pair of parallel edges.}
\label{fig:createparallel}
\end{figure}
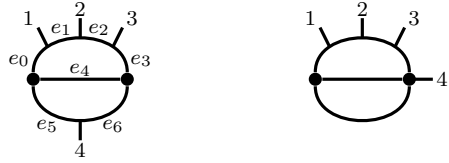

By abuse of notation, the edge in $G$ giving rise to $f_i$ will also be called $f_i$.  
Now $F$ obviously decomposes as the union of the two closed simplices $$F \cap \{l(f_1) \le l(f_2)\} \quad \text{ and }\quad F \cap \{l(f_1) \ge l(f_2)\}.$$  
In $\dtn$, each of these halves is identified with $\Delta(\G/e, f_1\le f_2)$, {\em but with opposite signs,} since the two identifications differ by a flip of parallel edges.  
Thus $F$ is sent to the image of $\Delta(\G/e, f_1\le f_2)$ in $\dtn$, which does indeed have codimension 1; however, its total contribution to the cellular boundary map is zero due to cancellation.

The other possibility is that contracting $e$ did not create a new parallel edges.  Then $F$ is identified with $\Delta(\G/e, \delta|_{\G/e})$ with a contribution of $\pm 1$ to the boundary map.  The sign simply depends on our choice of vertex-ordering for the two simplices.  

{\bf Case 2: $F$ is defined by $l(e) =l(e') = 0$}, for $e=_\delta e'$.
Then $F$ is sent to $\eps_{\text{br}}$ in $\dtn$, because contracting $e$ and $e'$ produces a vertex of positive weight.

{\bf Case 3: $F$ is defined by $l(e) = l(e')$}, for $e \le_\delta e'$.  Then $F$ is identified with $\D(\G, e=e')$ and the contribution to the boundary map is again $\pm 1$.

In each case, $F$ was identified with the closure of a cell of smaller dimension, so Proposition~\ref{p:cw}(2) is proved.

\end{proof}

\section{Homology of the cyclic theta complex}\label{s:cyclic}

Let $\G=(G,m,w)\in \TTh_{2,n}$.  We say $\G$ is a {\em cyclic theta} if there is a cycle in $\G$ passing through all the marked points.  If not, we say $\G$ is a {\em full theta}.  We let $\Tcyc_{2,n}$ and $\Tfull_{2,n}$ denote the set of all cyclic theta types and the set of all full theta types, respectively.
In this section, we compute the integral homology of the subcomplex of $\Delta_{2,n}$ supported on cyclic theta types.  In the next section, we will study the remaining full types, which will allow us to prove Theorems~\ref{t:1} and~\ref{t:3}.

\begin{defn}\label{d:cyclic}
  Let $C_{2,n}$ denote the subcomplex of $\dtn$
$$C_{2,n} = \zcell\cup\!\bigcup_{\G\in \Tcyc_{2,n}} \eps_{\G,\delta},$$
called the cyclic theta complex.
\end{defn}
\noindent This is a subcomplex of $\dtn$ because every contraction of a cyclic theta either is in the bridge locus or is again a cyclic theta type.
The next theorem computes the integral homology of $C_{2,n}$, and we will prove it in the rest of the section.  Our proof also explicitly constructs representatives for the nonzero homology classes in degree $n+1$.

\begin{thm}\label{t:cyclic}
For all $i\ge 0$, 
$$\Ht_i(C_{2,n},\ZZ)=\begin{cases}
({\ZZ}/{2\ZZ})^{\frac{(n-1)!}{2}} &\text{if } i = n+1, \\
0 &\text{else.}
\end{cases}$$
\end{thm}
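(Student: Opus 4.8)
The plan is to combine the CW structure of Proposition~\ref{p:cw} with the fact that the cyclic order of the marked points is almost a homological invariant. Since $n\ge 4$, every cyclic theta type $\G$ has a well-defined underlying circle --- the cycle through all $n$ markings; if two of the three edges of the underlying $\Theta$ carry no markings in their interiors, the two such cycles induce the same cyclic order --- and hence a well-defined cyclic word $\omega(\G)$ in $\{1,\dots,n\}$, taken up to rotation and reflection. The first step is to read off from the proof of Proposition~\ref{p:cw} that $\partial\eps_{\G,\delta}$ only involves $\zcell$ and cells $\eps_{\G',\delta'}$ with $\omega(\G')=\omega(\G)$: contracting an edge of $\G$ either merges two marked points or creates a virtual cut vertex, landing in $\btno$ (hence in $\zcell$); or it slides a marked point onto a trivalent vertex, leaving the cyclic word unchanged (and contractions that create a new parallel pair contribute nothing anyway), while imposing a decoration equality does not change $\G$. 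Hence, for each cyclic word $\omega$, the set $C^\omega := \zcell\cup\bigcup_{\omega(\G)=\omega}\eps_{\G,\delta}$ is a subcomplex of $\dtn$, any two of these intersect in exactly $\zcell$, and $C_{2,n}=\bigvee_\omega C^\omega$, wedged at $\zcell$. Therefore $\Ht_*(C_{2,n};\ZZ)=\bigoplus_\omega\Ht_*(C^\omega;\ZZ)$, and since there are exactly $(n-1)!/2$ cyclic words on $n$ letters up to rotation and reflection, the theorem reduces to the single statement that, for any fixed $\omega$, one has $\Ht_{n+1}(C^\omega;\ZZ)\cong\ZZ/2\ZZ$ and $\Ht_i(C^\omega;\ZZ)=0$ for $i\ne n+1$ --- i.e.\ $C^\omega$ has the integral homology of an $n$-fold suspension of $\mathbb{RP}^2$.

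To handle one $C^\omega$, fix $\omega=(1,2,\dots,n)$, whose symmetry group is trivial, so that the cells of $C^\omega$ are described by (decorated, metrised) placements of a chord on the circle carrying $1,\dots,n$ in this order, and the only automorphisms that occur are single transpositions of a parallel pair of edges. The next step is to shrink $C^\omega$ by a sequence of elementary collapses --- equivalently an acyclic matching on its face poset --- pairing most cells by contracting or un-contracting one distinguished edge (chosen by a fixed priority on the edges), or by toggling one decoration, so that up to homotopy $C^\omega$ becomes a CW complex whose only cells lie in dimensions $0$, $n+1$, and $n+2$; I expect one can arrange a single critical cell in each of dimensions $n+1$ and $n+2$. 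Constructing this matching and proving it acyclic (no directed cycles) is the most laborious part: one must organise by which of the three edges of $\Theta$ is unsubdivided, whether neither, one, or both trivalent vertices carry a marked point, and which decoration is present.

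The crux is then to identify the single remaining boundary map $C_{n+2}(C^\omega)\to C_{n+1}(C^\omega)$ and show it is multiplication by $\pm 2$ --- equivalently, that the top critical cell attaches to the bottom one with degree $2$. The mechanism is the transposition of parallel edges: geometrically the top cell corresponds to sweeping the chord once around the circle, and the two extreme ends of this sweep are glued not by the identity but by such a flip, which --- exactly as in the sign cancellation analysed in the proof of Proposition~\ref{p:cw} --- makes the two corresponding facets appear with the \emph{same} sign rather than opposite signs, so their contributions add to $2$ rather than cancel. This is precisely where $\ZZ/2\ZZ$-torsion, rather than a free summand or nothing, is produced: it is the genus-$2$ shadow of the attaching map of $\mathbb{RP}^2$, and carefully tracking these orientations is the main obstacle. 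Granting it, the reduced chain complex of $C^\omega$ is $\ZZ\xrightarrow{\,2\,}\ZZ$ in degrees $n+2,n+1$ with all other groups zero except for $\zcell$ in degree $0$; since $C^\omega$ is connected this yields $\Ht_{n+1}(C^\omega;\ZZ)\cong\ZZ/2\ZZ$ and $\Ht_i(C^\omega;\ZZ)=0$ otherwise.

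Finally, pushing the bottom critical cell of $C^\omega$ back through the collapses produces an explicit $(n+1)$-cycle $z_\omega$ supported on decorated cyclic theta types --- concretely a signed sum over the $n$ ways of cutting the necklace $\omega$ at a marked point into a linear chain, with the two unsubdivided edges of $\Theta$ set equal --- and the classes $[z_\omega]$, as $\omega$ ranges over the $(n-1)!/2$ cyclic words up to rotation and reflection, generate $\Ht_{n+1}(C_{2,n};\ZZ)\cong(\ZZ/2\ZZ)^{(n-1)!/2}$.
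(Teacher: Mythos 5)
Your first reduction is exactly the one the paper uses: boundaries of cells in the cyclic locus either land in $\zcell$ or preserve the induced cyclic ordering of the markings, so $C_{2,n}=\bigvee_\sigma \Csig_{2,n}$ and the theorem reduces to showing each $\Csig_{2,n}$ has reduced homology $\ZZ/2\ZZ$ concentrated in degree $n+1$. That part of your argument is sound. The gap is that this last statement \emph{is} the theorem, and your proposal for it is a discrete-Morse collapse that you never construct: you say you ``expect'' an acyclic matching with a single critical cell in each of dimensions $n+1$ and $n+2$, but you exhibit no matching, give no acyclicity argument, and do not compute the Morse differential. Note also that even granting the matching, the Morse differential is a signed count of gradient paths between the two critical cells, not a single incidence number, so the ``degree $2$'' claim requires a genuine computation rather than the heuristic appeal to the parallel-edge flip. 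There are real obstacles here: the CW structure of Proposition~\ref{p:cw} is not regular (facets are folded in half and identified in pairs, or collapsed to $\zcell$), so only faces with incidence number $\pm1$ may be matched and the standard simplicial machinery does not apply verbatim; and the cells of $\Csig_{2,n}$ occupy four dimensions $n-1,\dots,n+2$ with ranks $n$, $\binom{n+1}{2}+n$, $2\binom{n+1}{2}$, $\binom{n+1}{2}$, so a matching leaving only two critical cells is a delicate combinatorial construction, not an afterthought. The paper instead writes down the three boundary matrices explicitly (organized by the forms $\cyc{k_1}{k_2}$, $\cycl{k_1}{k_2}$, etc.) and computes their Smith normal forms; the factor $2$ emerges as the determinant of an explicit square block, and the torsion class is exhibited as $v/2$ for an explicit integral chain $v\equiv 0 \bmod 2$ in the image of $\del_{n+2}\otimes\QQ$.

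Your proposed explicit generator is also not correct as stated. Cutting the necklace ``at a marked point'' puts that marking on a trivalent vertex and yields cells of the form $\cycl{n-1}{=}$, which by Lemma~\ref{l:forms} have dimension $n$, not $n+1$; and under the other reading (all $n$ markings on one edge, giving the $n$ cells of the form $\cyc{n}{=}$), the signed sum is not a cycle, since each boundary face of the form $\cycl{n-1}{=}$ or $\cycr{n-1}{=}$ occurs in the boundary of exactly one such cell and also receives contributions only from cells of other forms (e.g.\ $\cycl{n-1}{\le}$) that your chain omits. So both the main computation and the claimed representative need to be redone.
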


\begin{proof}
Consider the $(n-1)!/2$ possible cyclic orderings of the set $\{1,\ldots,n\}$, i.e.~ways to place $n$ numbers around a circle. 
For example, when $n=4$ the three cyclic orderings are $(1234),~(1243),~(1324)$.
Suppose $\G=(G,m,w)\in \Tcyc_{2,n}$. Notice that $\G$ determines a unique cyclic ordering on $\{1,\ldots,n\}$.  Given a cyclic ordering $\sigma$, write $\Tsig_{2,n}$ for the types in $\Tcyc_{2,n}$ inducing $\sigma.$

Now suppose $e\in E(G)$.  As noted above, either $\G/e \in \Tbr_{2,n}$ or $\G/e$ is again a cyclic theta.  The crucial observation is that in the latter case, $\G$ and $\G/e$ determine the {\em same} cyclic ordering on $\{1,\ldots,n\}$.  Thus, for any cyclic ordering $\sigma$ of $\{1,\ldots,n\}$, we may define $\Csig_{2,n}$ to be the subcomplex on cells
$$\Csig_{2,n} = \zcell\cup\!\bigcup_{\G\in\Tsig_{2,n}} \eps_{\G,\delta}.$$
Then we have
$$C_{2,n} = \bigvee_{\sigma} \Csig_{2,n},$$
where the wedge is over all cyclic orderings of $\{1,\ldots,n\}$ and the identification is at the 0-cell $\zcell.$
Therefore 
$$\Ht_i(C_{2,n},\ZZ) \cong \bigoplus_\sigma \Ht_i (\Csig_{2,n}, \ZZ).$$

Therefore the following claim implies the theorem.

\begin{claim}\label{c:cyclic}
For any cyclic ordering $\sigma$ of $\{1,\ldots,n\}$, we have
$$\Ht_i(\Csig_{2,n},\ZZ)=\begin{cases}
{\ZZ}/{2\ZZ} &\text{if } i = n+1, \\
0 &\text{else.}
\end{cases}$$
\end{claim}

Now we prove the claim.  Clearly the choice of $\sigma$ does not matter; choose $\sig = (1\cdots n)$ once and for all.  Write $C^{\sigma,d}_{2,n}$ for the $d$-skeleton of $\Csig_{2,n}$.  For simplicity write 
\begin{equation}\label{eq:vi}
V_i = H_{i}(C^{\sigma,i}_{2,n},C^{\sigma,i-1}_{2,n};\ZZ).
\end{equation}

Consider the cells of $\Csig_{2,n}$.  A type $\G= (G,m)\in \Tsig_{2,n}$ is obtained from an unmarked graph of type I in Figure~\ref{f:genus2}, which we will once again call $\Theta$, by marking $n$ distinct points on it. (If the interior of an edge is marked then we consider it to be subdivided there.)  Thus, $G$ has $n+1,n+2,$ or $n+3$ edges, depending on whether 0, 1, or 2 of the vertices of $\Theta$ are marked.

Then by Lemma~\ref{l:faces}\eqref{it:dimdec}, the cell $\eps_{\G,\delta}$ has dimension $n-1,n,n+1,$ or $n+2$.  This already shows that 
$\Ht_i(\Csig_{2,n},\ZZ)=0$ for $i < n-1$.  
We are left to compute the homology of the complex of $\ZZ$-modules
\begin{equation}
0 \lra V_{n+2} \stackrel{\del_{n\!+\!2}}{\lra} V_{n+1} \stackrel{\del_{n\!+\!1}}{\lra} V_n \stackrel{\del_n}{\lra} V_{n-1} \lra 0
\label{eq:bdycx}
\end{equation}
with the modules $V_i$ as in~\eqref{eq:vi}.

We need to set some notation for the types $\G$ involved, and set a convention for ordering the vertices of the relevant simplices $\Delta(\G)$ and $\Delta(\G,\delta)$.  
Choose, once and for all, one of the two possible orientations of $\sigma$, say the oriented cyclic order $1,\ldots,n$.   We will say that $\G$ is of the form $(\eps_1, k_1, \eps_2, k_2)$, with $\eps_i \in \{0,1\}$ and $\eps_1+k_1+\eps_2+k_2=n$, if $\G$ is obtained by adding markings to a $\Theta$ graph such that, in the (oriented) cyclic order, there are $\eps_1$ marked points on a vertex of $\Theta$, then $k_1$ marked points on an edge, then $\eps_2$ marked points on the other vertex of $\Theta$, then $k_2$ marked points on another edge.  Obviously we have an equivalence of such forms via cyclic reordering: $(\eps_1,k_1,\eps_2,k_2)\sim(\eps_2,k_2,\eps_1,k_1).$  
For example, the types in Figure~\ref{fig:createparallel} have forms $(0,3,0,1)$ and $(0,3,1,0)$ respectively.

We adopt the convention that the edges of a type $\G$ of form $(\eps_1,k_1,\eps_2,k_2)$ are ordered as follows: we place the $k_1$ marked points in order, left to right, on the top edge of $\Theta$, and the $k_2$ marked points right to left on the bottom edge of $\Theta$; then we put the edges in reading order.   See Figure~\ref{fig:createparallel} for an example.
Note that our choice of ordering of the edges of $\G$ induces an order on the vertices of $\D(\G)$, when $\G$ is an automorphism-free type.  We will take this ordering throughout.

In keeping with the above convention, we will sometimes notate the forms 
$$(0,k_1,0,k_2)\qquad(1,k_1,0,k_2)\qquad(0,k_1,1,k_2)\qquad(1,k_1,1,k_2)$$
as
$$\cyc{k_1}{k_2}\qquad\qquad\cycl{k_1}{k_2}\qquad\qquad\cycr{k_1}{k_2}\qquad\qquad\cyclr{k_1}{k_2}$$
respectively, simply as a visual aid.  Furthermore we notate a specific type in the same way.  For example the type on the left in Figure~\ref{fig:createparallel} will be written
$\tbinom{123}{4}$
and it has form $\cyc{3}{1}.$

Furthermore, suppose $\G$ has a nontrivial automorphism.  That is, $\G$ has form $(\eps_1,k_1,\eps_2,k_2)$ and $k_1 = 0$ or $k_2=0$. Without loss of generality, suppose $k_2=0$.  Then $\G$ has a pair of parallel edges $e$ and $e'$, and $\G$ thus admits two possible decorations $\delta$, namely $e=e'$ or $e\le e'$.  Then by slight abuse of notation we will write $(\G,\delta)$ as having form $(\eps_1,k_1,\eps_2,=)$ or $(\eps_1,k_1,\eps_2,\le)$.  We extend this notation to the shorthand above.  For example $(1,k_1,0,\le)$ will be written $\cycl{k_1}{\le}$. 

Furthermore, we adopt the following convention on the order of the vertices of $\Delta(\G,\delta)$ in this case.  Write $e_0,\ldots, e_{n'}$ for the edges $E(G)-\{e,e'\}$ taken as they appear in the directed path compatible with the chosen orientation of $\sigma$.  See Figure~\ref{fig:new}.  So $n' = n -\eps_1 - \eps_2$.  
If the decoration $\delta$ is $e=e'$ then the vertices of $\D(\G,\delta)$ are $$\{\frac{1}{2}(l_e + l_{e'}), l_{e_0}, \ldots, l_{e_{n'}}\}$$ by Lemma~\ref{l:faces}\eqref{it:vertdec}, and we take them in that order.  If the decoration is $e \ge e'$ then the vertices of $\D(\G,\delta)$ are $$\{l_e, \frac{1}{2}(l_e + l_{e'}), l_{e_0},\ldots, l_{e_{n'}}\}$$ and we take them in that order.  
 We use analogous conventions if $\G$ has the form $(\eps_1, 0, \eps_2, k_2)$.  

In this way, we have chosen orientations for each of the cells of $\Csig_{2,n}$.  We will write down the boundary maps $\del_i$ in~\eqref{eq:bdycx} explicitly in terms of them.  Of course, they are just conventions chosen for the sake of explicit computation; they do not affect the Smith normal forms of $\del_i$.

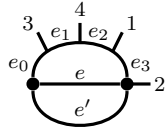
\begin{figure}
\begin{tikzpicture}[my_node/.style={fill, circle, inner sep=1.75pt}, scale=1.25]
\begin{scope}
\node[my_node] (A) at (-.5,0){};
\node[my_node] (B) at (.5,0){};
\draw[very thick] (A)--(B);
\draw[very thick] (A) to [out=-90,in=-90,looseness=1.25] (B);
\draw[very thick] (A) to [out=90,in=90,looseness=1.25] (B);
\draw[very thick] (-.35,.35) -- (-.45,.55);
\draw (-.55,.65) node {${\scriptstyle 3}$};
\draw[very thick] (.35,.35) -- (.45,.55);
\draw (.55,.65) node {${\scriptstyle 1}$};
\draw[very thick] (0,.43) -- (0,.65);
\draw (0,.8) node {${\scriptstyle 4}$};
\draw[very thick] (0.5,0) -- (0.75,0);
\draw (.85,0) node {${\scriptstyle 2}$};
\draw (-.65,.2) node {${\scriptstyle e_0}$};
\draw (-.2,.53) node {${\scriptstyle e_1}$};
\draw (.2,.53) node {${\scriptstyle e_2}$};
\draw (.65,.2) node {${\scriptstyle e_3}$};
\draw (0,.1) node {${\scriptstyle e}$};
\draw (0.03,-.25) node {${\scriptstyle e'}$};
\end{scope}

\end{tikzpicture}
\caption{The edges $e_0,\ldots,e_{n'}$ in the ordering compatible with the oriented cyclic order $\sigma = (1234)$.  Here $n' = n - 1 = 3.$}
\label{fig:new}
\end{figure}

\begin{lemma}\label{l:forms}
Suppose $\G$ is a decorated type that is a cyclic theta, with cyclic order $\sigma.$  Then we have the following case analysis for the possible forms of $(\G,\delta)$:
\begin{enumerate}
	\item $(\G,\delta)$ has one of the forms 
	$$\cyc{n}{\le} \quad  \cyc{n-1}{1} ~\cdots ~\cyc{\lceil n/2\rceil }{\lfloor n/2\rfloor }$$
	and in these cases $\eps_{\G,\delta}$ has dimension $n+2$;
	\item $(\G,\delta)$ has one of the forms
	$$\cyc{n}{=} \quad \cycl{n-1}{\le} \quad \cycl{n-2}{1} ~\cdots ~ \cycl{1}{n-2} \quad \cycl{\le}{n-1},$$
	and in these cases $\eps_{\G,\delta}$ has dimension $n+1$;
	\item $(\G,\delta)$ has one of the forms
	$$\cycl{n-1}{=}\quad \cycl{=}{n-1}\quad \cyclr{n-2}{\le} \quad \cyclr{n-3}{1} ~\cdots ~ \cyclr{\lceil n/2 \rceil - 1}{\lfloor n/2 \rfloor -1},$$
	and in these cases $\eps_{\G,\delta}$ has dimension $n$;
\item $(\G,\delta)$ has the form
	$$\cyclr{n-2}{=}$$
	and in this case $\eps_{\G,\delta}$ has dimension $n-1$.
\end{enumerate}
\end{lemma}
\begin{proof}
This is a straightforward case analysis; the dimensions of the cells were computed in Lemma~\ref{l:faces}\eqref{it:dimdec}.
\end{proof}
By counting we deduce the following.
\begin{lemma}\label{l:dims}
The modules $V_i$ in~\eqref{eq:vi} have ranks:
\begin{enumerate}
 \itemsep1em
\item $\rank V_{n+2} = {n+1\choose2}$,
\item $\rank V_{n+1} = 2{n+1\choose 2}$,
\item $\rank V_{n} = {n+1\choose 2} + n$,
\item $\rank V_{n-1} = n.$
\end{enumerate}
\end{lemma}
\noindent In this lemma, ${n+1\choose2}$ has its usual meaning ``$n\!+\!1$ choose $2$,'' not to be confused with the previous notation.
\begin{proof}[Proof of Lemma~\ref{l:dims}]
This is a routine count in each case. For example, part (1) is true because there are ${n+1\choose2}=n(n+1)/2$ ways to cut the numbers $1,\ldots,n$, arranged around a circle, into two segments (with length zero segments permitted).  Alternatively, the counts can be derived directly from Lemma~\ref{l:forms}.
\end{proof}

We can now compute the homology of~\eqref{eq:bdycx}.  The proof is by direct computation.  We compute the Smith normal forms over $\ZZ$ of each boundary map $\del_{n+2},\del_{n+1},$ and $\del_n$.  The boundary maps themselves are computed using the case analysis in the proof of Proposition~\ref{p:cw}.  We compute them with respect to the choices of orientations established earlier in the section.  The homology computations then immediately follow from the Smith normal forms.  From here on, $\SNF$ denotes Smith normal form, and $\Id$ and ${\mathbf 0}$ denote identity and zero matrices respectively.

\begin{claim}\label{c:n+2}
The normal form of $\del_{n+2}$ is
\begin{equation}\label{eq:n+2}
\SNF(\del_{n+2}) \,=\,\, \begin{blockarray}{ccccc}
 \BAmulticolumn{4}{c}{\scr {n(n+1)/2}}&\\
 \begin{block}{(cccc)c}
 1 &&&&\\
 & \ddots &&& \scr \scr {n(n+1)/2} \\
&& 1 && \\
 &&& 2 & \\
 \cline{1-4}
 &&&&\\&&&&\\
\BAmulticolumn{4}{c}{\boldsymbol 0} &\scr \scr {n(n+1)/2} \\
 &&&&\\ 
 \end{block}
 \end{blockarray}
\end{equation}
\end{claim}

\begin{claim}\label{c:n+1}
The normal form of $\del_{n+1}$ is
\begin{equation}\label{eq:n+1}
\SNF(\del_{n+1}) \,=\,\, \begin{blockarray}{ccc}
 \scr {n(n+1)/2} & \scr {n(n+1)/2} &  \\
  \begin{block}{(c|c)c}
 && \\ 
 \Id &\mathbf{0} & \scr \scr {n(n+1)/2} \\
 && \\
 \cline{1-2}
 \mathbf{0} & \mathbf{0}& \scr n\quad \\
 \end{block}
  \end{blockarray}
\end{equation}
\end{claim}

\begin{claim}\label{c:n}
The normal form of $\del_{n}$ is
\begin{equation}\label{eq:n}
\SNF(\del_{n}) \,=\,\, \begin{blockarray}{ccc}
 \scr n & \scr {n(n+1)/2} &\\
  \begin{block}{(c|c)c}
  &&\\
 \quad\Id\,\,\,\,\,\, &\mathbf{0} & \scr n \\
 &&\\
 \end{block}
  \end{blockarray}
\end{equation}
\end{claim}

Now we prove Claims~\ref{c:n+2},~\ref{c:n+1},~\ref{c:n}.  The boundary maps look different according to the parity of $n$, so we split our analysis into cases.  
\medskip

\begin{proof}[\bf Proof of Claim~\ref{c:n+2} for $n$ odd]
Let $n=2k+1$.
First, consider the the $n\times n$ square submatrix of $\del_{n+2}$ whose columns are indexed by all types of the form $\cyc{k+1}{k}$ and whose rows are indexed by all types of the form $\cycl{k}{k}$. Call this submatrix $A$.
We now show that $\det(A)\ne 0$, indeed, that $A$  
is an $n\times n$ double diagonal matrix, with two entries 1 in each column, corresponding to two permutations in $S_n$ of the same sign.  For example, when $n=5$, we have

$$A~=~\bordermatrix{ & \tbinom{123}{54} & \tbinom{234}{15} & \tbinom{345}{21} & \tbinom{451}{32} & \tbinom{512}{43} \cr
{\scriptstyle 1}\tbinom{23}{54} & 1 &\null&\null& 1 &\null\cr
{\scriptstyle 2}\tbinom{34}{15} &\null& 1 &\null&\null& 1 \cr
{\scriptstyle 3}\tbinom{45}{21} & 1 &\null& 1 &\null&\null\cr
{\scriptstyle 4}\tbinom{51}{32} &\null& 1 &\null& 1 &\null\cr
{\scriptstyle 5}\tbinom{12}{43} &\null&\null& 1 &\null& 1 }.$$

\noindent More generally, for each $i\in \{1,\ldots,n\}$, write
$$\G_i = \tbinom{i~\cdots~k+i}{i\!-\!1~\cdots~k+i+1}, \qquad \mathbf{H}_i = {\scriptstyle i}\tbinom{i+1~\cdots~ k+i}{i-1 ~\cdots~ k+i+1}.$$
Then the column of $A$ indexed by $\G_i$ has an entry 1 in row $\mathbf{H}_i$ and in row $\mathbf{H}_{k+i}$ (with indices modulo $n$), and no other nonzero entries.  This is because $\mathbf{H}_i$ is obtained by contracting edge $e_0$ of $\G_i$, i.e.~$\D(\mathbf{H}_i)$ is identified with facet $0$ of $\D(\G_i)$.  Similarly, $\mathbf{H}_{k+i}$ is obtained by contracting edge $e_{k+1}$ of $\G_i$, which gives sign $(-1)^{k+1}$; and then reversing the order of the $n+2 = 2k+3$ edges.  This reversal is a product of $k+1$ transpositions.  Hence the total contribution is $(-1)^{2(k+1)} = 1$.  
Then it follows that 
\begin{equation}\label{eq:A}
\det(A) = 1 + \sgn(1~k\!+\!1~n~k~\cdots~ k\!+\!2)= 2 \ne 0,
\end{equation}
where the permutation above is written in cyclic notation and is a $n$-cycle.

Then $\del_{n+2}$ has the following form, for suitable orderings of rows and columns:

\begin{equation}\label{eq:n+2fullform}
\bordermatrix{ &\cyc{k+1}{k} & \cyc{k+2}{k-1} & \cdots & \cyc{2k}{1} & \cyc{2k+1}{\le} \cr
\cycl{k}{k} & A &  & &\cdots &  \mathbf{0}\cr
\cycl{k+1}{k-1} & * & \Id & &  &\vdots  \cr
\quad\vdots &  & * & \Id &  &  \cr
\cycl{2k-1}{1} & \vdots &  & * & \Id &  \cr
\cycl{2k}{\le} & \mathbf{0} & \cdots &  &  & \Id\cr
\cline{2-6}
\cycr{k+1}{k-1} & -\Id & * &&\cdots &\mathbf{0} \cr
\cycr{k+2}{k-2} && -\Id & * &&\vdots \cr
\quad\vdots &&& -\Id & * & \cr
\cycr{2k}{\le} &\vdots &&&& -\Id \cr
\cyc{2k+1}{=} &\mathbf{0}&\cdots &&& \Id\cr
}
.
\end{equation}

\medskip

\noindent Here, each block represents an $n\times n$ submatrix whose rows and columns are indexed by types of the indicated forms.  The blocks notated $*$ are in fact signed $n\times n$ identity matrices up to row/column permutation.  

As a brief example of how to compute the entries in $\del_{n+2}$, we explain why the $n\times n$ submatrix with columns of form $\cyc{2k}{1}$ and rows of form $\cycr{2k}{\le}$ is $\mathbf{0}$.  Indeed, it is possible to contract an edge of a type of form $\cyc{2k}{1}$ to obtain a type of form $\cycr{2k}{\le}$ with two parallel edges $e$ and $e'$.  An example is shown in Figure~\ref{fig:createparallel}.  (In the figure, $n$ is even, but the essential point is the same.)  However, there are two possible choices of decoration for the new type: $e \ge e'$ and $e' \ge e$. So the corresponding codimension 1 cell occurs once with each orientations, and the total contribution is zero.  This was already argued in the proof of Proposition~\ref{p:cw}, and all of the other claimed entries of $\del_{n+2}$ similarly follow from the case analysis in that proof.

Now we compute $\SNF(\del_{n+2})$.  Consider the upper $(n+1)n/2 \times (n+1)n/2$ square submatrix of $\del_{n+2}$.  It is the submatrix above the line in~\eqref{eq:n+2}.  It is invertible since $A$ is, and by~\eqref{eq:A} its Smith normal form is $\diag(1,\ldots,1,2)$.  Therefore either $\SNF(\del_{n+2})$ is as claimed in~\eqref{eq:n+2}, or it is instead a $(n+1)n/2$ square {\em identity} matrix, padded with zeroes.  We show that the latter is not possible by exhibiting a torsion element in $V_{n+1}/\im(\del_{n+2})$.  Let $v$ be the sum of the first $(n-1)n/2$ columns of $\del_{n+2}$ in~\eqref{eq:n+2fullform}; call it $v$.  That is, $v$ is sum the column vectors corresponding to all types except those of the form $\cyc{2k+1}{\le}$.  By inspecting~\eqref{eq:n+2fullform}, we see $v\equiv 0\! \mod 2$.  The columns of $\del_{n+2}$ are linearly independent over $\QQ$, from which it follows that $v/2 \in V_{n+1}$ is not in the image of $\del_{n+2}$ considered as a $\ZZ$-module map.  This shows Claim~\ref{c:n+2} for $n$ odd, and gives an explicit representative for the nonzero homology class of $\Csig_{2,n}$ in degree $n+1$.
\end{proof}

\begin{proof}[\bf Proof of Claim~\ref{c:n+2} for $n$ even]
Let $n=2k$.  First, some temporary notation: write $\cycl{k}{k-1}'$, respectively $\cycl{k}{k-1}''$, for the types of the form $\cycl{k}{k-1}$ in which the marking on the left vertex of $\Theta$ is in $\{1,\ldots,k\}$, respectively in $\{k\!+\!1,\ldots,n\}$.  
For example, when $n=4$ the types of form $\cycl{2}{1}'$ are ${\scr 1}{23\choose 4}$ and ${\scr 2}{34\choose 1}$, and the types of form $\cycl{2}{1}''$ are ${\scr 3}{41\choose2}$ and ${\scr 4}{12\choose3}$.

Now let $A$ be the $(3n/2) \times (3n/2)$ square submatrix of $\del_{n+2}$ with columns indexed by all types of the form $\cyc{k+1}{k-1}$ or $ \cyc{k}{k}$ and rows indexed by all types of the form $\cycr{k}{k-1}$ or $\cycl{k}{k-1}'$.  For example, when $n=4$, we have
\begin{equation}\label{eq:B}
A=\bordermatrix{%
& \cyc{123}{4} & \cyc{234}{1} & \cyc{341}{2} & \cyc{412}{3} & \cyc{12}{43} & \cyc{23}{14}\cr
\cyc{12}{4}{\scriptstyle 3} &-1 & & & &1 & \cr
\cyc{23}{1}{\scriptstyle 4} & &-1 & & & &1 \cr
\cyc{34}{2}{\scriptstyle 1} & & &-1 & &-1 & \cr
\cyc{41}{3}{\scriptstyle 2} & & & &-1 & &-1 \cr
{\scriptstyle 1}\cyc{23}{4} &1 & & & & &1 \cr
{\scriptstyle 2}\cyc{34}{1} & &1 & & &-1 & 
}.
\end{equation}
We claim that $\det A=\pm 2$.  First, consider the $k$ types
$$\cyc{k\!+\!1 ~\cdots~ 1}{k~\cdots~2}\quad \cdots \quad \cyc{n~1 ~\cdots~k}{n\!-\!1~\cdots~k\!+\!1}.$$
Each of the columns they index has a single nonzero entry which is $\pm 1$, i.e.~in the $k$ rows defined by
$$\cyc{k\!+\!1 ~\cdots~ n}{k~\cdots~2}{\scriptstyle 1} \quad \cdots \quad \cyc{n~1 ~\cdots~k\!-\!1}{n\!-\!1~\cdots~k\!+\!1}{\scriptstyle k}.$$  So we may delete these $k$ rows and $k$ columns to obtain a matrix $A'$ with $\det A' = \pm \det A.$

Now we may check that $A'$ has the form
$$\begin{blockarray}{cc|c}
&\cyc{1 ~\cdots~ k\!+\!1}{n~\cdots~k\!+\!2}\quad \cdots \quad \cyc{k ~\cdots~n}{k\!-\!1~\cdots~1}&%
\cyc{1 ~\cdots~ k}{n~\cdots~k\!+\!1}\quad \cdots \quad \cyc{k ~\cdots~n\!-\!1}{k\!-\!1~\cdots~n}\\
\begin{block}{c(c|c)}
\begin{array}{c}\cyc{1 ~\cdots~ k}{n~\cdots~k\!+\!2}{\scriptstyle k\!+\!1}\\ \vdots \\  \cyc{k ~\cdots~n\!-\!1}{k\!-\!1~\cdots~1}{\scriptstyle n}\\ \null
\end{array}& (-\Id)^{k+1}&\Id \\
\cline{2-3}
\begin{array}{c}{\scriptstyle 1}\cyc{2 ~\cdots~ k\!+\!1}{n~\cdots~k\!+\!2}\\ \vdots \\  {\scriptstyle k}\cyc{k \!+\!1 ~\cdots~n}{k\!-\!1~\cdots~1}
\end{array}& \Id&\begin{array}{cccc}  & (-1)^k & & \\ & &\ddots& \\ &&&(-1)^k\\ -1 &&& \end{array} \\
\end{block}
\end{blockarray}
$$
For example, when $n=4$, $A'$ is the $4\times 4$ matrix obtained from~\eqref{eq:B} by deleting the middle two rows and columns.  The computations of each entry follow directly from our choice of ordering of the edges for each type.  Then we have
\begin{align}\label{eq:detb'}
\begin{split}
\det A' &= (-1)^{k(k+1)+(k-1)k+1}\sgn (\tau_1) + \sgn(\tau_2) \\&= (-1)^k + (-1)^k  =\pm 2,
\end{split}\end{align}
where $\tau_1 = (n~n\!-\!1~\cdots~k\!+\!1)$ is a $k$-cycle, and $\tau_2 = (1~k\!+\!1)(2~k\!+\!2)\cdots(k~n)$ is a product of $k$ disjoint transpositions.

Next, we can see that $\del_{n+2}$ has a square submatrix of the following form, again for suitable orderings of rows and columns:

\begin{equation}\label{eq:n+2even}
\begin{blockarray}{cccccc}
 &\cyc{2k}{\le}& \cyc{2k-1}{1}& \cdots & \cyc{k+2}{k-2}& \cyc{k+1}{k-1} ~\cyc{k}{k} \\
\begin{block}{c(cccc|c)}
\cycl{2k-1}{\le} & \Id &  & & &\cdots \quad \mathbf{0}\\
\cycl{2k-2}{1} &  & \Id &* & &\qquad \vdots \\
\vdots &  & & \Id & * &\\
\cycl{k+1}{k-2} &  &  & &\Id &*\quad \quad\\
\cline{2-6}
\begin{array}{c}\\[-.2cm] \cycr{k}{k-1} \\[.2cm] \cycl{k}{k-1}' \\ \null \end{array} & \begin{array}{c}\vdots\\[.2cm] \mathbf{0} \end{array} & \begin{array}{c} \\ \ldots \end{array}  & & &A\\
\end{block}
\end{blockarray}
\end{equation}
Note that the submatrix~\eqref{eq:n+2even} uses all columns of $\del_{n+2}$, and moreover by 
\eqref{eq:detb'} its normal form is a diagonal matrix with entries $(1,\ldots,1,2)$.
It follows that $\SNF(\del_{n+2})$ is either of the form claimed in~\eqref{eq:n+2}, or it is an ${n+1\choose2}\times {n+1\choose2}$ {identity} matrix, padded with zeroes.  Again we argue that the latter is not possible by exhibiting a nontrivial torsion element of $V_{n+1}/\im(\del_{n+2})$.  Consider the vector $v$ that is the sum of all of the columns of $\del_{n+2}$ except those indexed by types of the form $\cyc{2k}{\le}$.  Note $v\equiv 0\mod 2$.  This is because $v$ does not involve any rows indexed by $\cycl{2k-1}{\le}, \cycr{2k-1}{\le},$ or $\cyc{2k}{=}$, and every row other than these $3n$ rows has exactly two $\pm 1$ nonzero entries.
So we have produced a nontrivial torsion element of $V_{n+1}/\im(\del_{n+2})$, namely $v/2$.  This constructs a representative for a nonzero homology class in degree $n+1$, and proves the claim.
\end{proof}

\begin{proof}[\bf Proof of Claim~\ref{c:n+1} for $n$ odd]
Let $n=2k+1$.
The matrix for $\del_{n+1}$ has the following form. Again, each symbol represents an $n\times n$ submatrix, with rows and columns indexed by the types of the indicated forms.  
\begin{equation*}
\begin{blockarray}{ccccccccccc}
& \cycl{k}{k} & \cycl{k+1}{k-1} & \cdots & \cycl{2k-1}{1} & \cycr{k+1}{k-1} &\cdots & \cycr{2k-1}{1} & \cycl{2k}{\le} & \cycr{2k}{\le} & \cyc{2k+1}{=}\\[.2cm]
\begin{block}{c(cccc|ccc|ccc)}
\cyclr{k}{k-1} & B & * && & \Id &&&&&\\
\vdots & & * & * && * & \Id &&&& \\[.2cm]
\cyclr{2k-2}{1} & & &* & * & & * &\Id &&&\\[.2cm]
\cline{2-11}
&&&&&&&&&&\\[-.25cm]
\cyclr{2k-1}{\le} &&&&&&&& \Id & \Id & \\[.2cm]
\cycl{2k}{=} &&&&&&&& \Id & & \!\!\!-\!\Id \\[.2cm]
\cycr{2k}{=} &&&&&&&&& \Id & \Id \\
\end{block}
\end{blockarray}
\end{equation*}
Here $B$ is a double diagonal matrix and the matrices $*$ are signed identity matrices up to row/column permutation.  After column operations preserving Smith normal form, we may assume instead that $B$ and all the matrices $*$ are zero.  Moreover the lower right $3n\times 3n$ submatrix has Smith normal form 
$$\left(\begin{array}{ccc}
\Id & & \\
& \Id & \\
& & \mathbf{0}
\end{array}\right).$$
This shows Claim~\ref{c:n+1} for $n$ odd.
\end{proof}

\begin{proof}[\bf Proof of Claim~\ref{c:n+1} for $n$ even]
Let $n=2k.$
The matrix for $\del_{n+1}$ has the following form:
\begin{equation*}
\begin{blockarray}{cccccccccc}
& \cycl{k}{k-1} & \cdots & \cycl{2k-2}{1} & \cycr{k}{k-1} &\cdots & \cycr{2k-2}{1} & \cycl{2k-1}{\le} & \cycr{2k-1}{\le} & \cyc{2k}{=}\\[.2cm]
\begin{block}{c(ccc|ccc|ccc)}
\cyclr{k-1}{k-1} &  B' &  &  &    B'' &&&&&\\[-.1cm]
\vdots & * & * &  & * & \Id &&&& \\[.2cm]
\cyclr{2k-3}{1} & &* &* & &* &\Id & & &\\[.2cm]
\cline{2-10}
&&&&&&&&&\\[-.25cm]
\cyclr{2k-2}{\le} &&&&&&& \!\!\!-\!\Id & \Id & \\[.2cm]
\cycl{2k-1}{=} &&&&&&& \Id & & -\Id \\[.2cm]
\cycr{2k-1}{=} &&&&&&&& \Id & -\Id\\
\end{block}
\end{blockarray}
\end{equation*}
where $B'$ and $B''$ are the $k\times n$ matrices 
$$\left(\begin{array}{c|c} &\\[-.1cm] \,\,\,(-\Id)^k\,\,\, & \,\,\,\Id\,\,\, \\[-.1cm] & \\\end{array}\right)\quad\text{and}\quad\left(\begin{array}{c|c} &\\[-.1cm] \,\,\,\Id\,\,\, & \,\,\,(-\Id)^k\,\,\, \\[-.1cm] & \\\end{array}\right) $$
and the matrices $*$ are signed $n\times n$ identity matrices up to row/column permutation.  After column operations preserving Smith normal form, we may assume instead that $B'$ and all the matrices $*$ are zero, and that 
$$B'' = \left(\begin{array}{c|c} &\\[-.1cm] \,\,\,\mathbf{0}\,\,\, & \,\,\,(-\Id)^k\,\,\, \\[-.1cm] & \\\end{array}\right). $$
Moreover the lower right $3n\times 3n$ submatrix has Smith normal form 
$$\left(\begin{array}{ccc}
\Id & & \\
& \Id & \\
& & \mathbf{0}
\end{array}\right).$$
Putting these statements together shows Claim~\ref{c:n+1} for $n$ even.
\end{proof}

\begin{proof}[\bf Proof of Claim~\ref{c:n} for all $n$]
This claim is clear from the fact that $\del_n$ restricted to types of the form $\cyclr{n-2}{\le}$ is $\mathrm{Id}_{n\times n}$.  
\end{proof}

Claims~\ref{c:n+2},~\ref{c:n+1}, and~\ref{c:n}, which we just proved, imply Claim~\ref{c:cyclic} immediately, and Claim~\ref{c:cyclic} proves Theorem~\ref{t:cyclic}.

\end{proof}

\section{The $\ZZ$-homology of $\mtno$ vanishes in codimension $>1$}\label{s:mainproof}

In this section, we will study the final piece of the complex $\dtn$, the locus of {\em full theta types}.  Recall that a type $\G\in \TTh_{2,n}$ is {\em full} if its marked vertices do not lie on a single cycle.  Equivalently, $\G$ is obtained from the unmarked graph $\Theta$ by adding markings such that at least one marked point lands on the interior of each of the three edges of $\Theta.$  

Suppose $\G$ is a theta type.  Let us say it has form $(\eps_1,\eps_2,k_1,k_2,k_3)$, with $1 \ge \eps_1 \ge \eps_2 \ge 0$ and $k_1 \ge k_2 \ge k_3 \ge 0$, if it is obtained from the unmarked graph $\Theta$ by placing $\eps_1$ and $\eps_2$ markings on each of the two vertices and $k_1,k_2,$ and $k_3$ markings on the interiors of the three edges.
Thus $\eps_1+\eps_2+k_1+k_2+k_3 = n$.

Let $F_{2,n}$ be the closure in $\dtn$ of the locus of full thetas, i.e.
$$F_{2,n} = \overline{\bigcup_{\G\in\Tfull_{2,n} } \eps_\G}.$$
Thus $F_{2,n}$ is the subcomplex of $\dtn$ whose cells are all possible contractions of full theta types, and it inherits a CW structure from $\dtn$.  Then $C_{2,n} \cup F_{2,n} = \dtn$.  Now
Theorem~\ref{t:cyclic} implies that $\Ht_i(C_{2,n};\QQ) = 0$ for all $i$ and that $\Ht_i(C_{2,n};\ZZ) = 0$ for all $i\le n$.  Then we have 
\begin{equation}\label{eq:overQ}
\tilde{H}_*(\dtn;\QQ) \cong \tilde{H}_*(\dtn/C_{2,n};\QQ) \cong \tilde{H}_*(F_{2,n} / (F_{2,n}\cap C_{2,n}); \QQ).
\end{equation}
and, for $0 \le i \le n$,
\begin{equation}\label{eq:overZ}
\tilde{H}_i(\dtn;\ZZ) \cong \tilde{H}_i(\dtn/C_{2,n};\ZZ) \cong \tilde{H}_i(F_{2,n} / (F_{2,n}\cap C_{2,n}); \ZZ). 
\end{equation}

So we study $F_{2,n}$ and $F_{2,n}/(F_{2,n}\cap C_{2,n})$ now. 
The next lemma describes the cells in $F_{2,n}$.
\begin{lemma}\label{l:fforms}
Suppose $\eps_{\G,\delta}$ is a cell in $F_{2,n}$ other than $\eps_{\mathrm{br}}$.  Then we have the following case analysis for the possible forms of $(\G,\delta)$:

\begin{enumerate}
	\item \label{it:fforms1} $(\G,\delta)$ is a full theta of the form $(0, 0, k_1, k_2, k_3)$, and in this case $\eps_\G$ has dimension $n+2$;
	\item \label{it:fforms2} $(\G,\delta)$ is a full theta of the form $(1, 0, k_1, k_2, k_3)$ or a cyclic theta of the form $\cycl{k_1}{k_2}$ for $k_1,k_2>0$, and in these cases $\eps_\G$ has dimension $n+1$;
	\item \label{it:fforms3} $(\G,\delta)$ is a full theta of the form $(1, 1, k_1, k_2, k_3)$ or is a cyclic theta of the form $\cyclr{k_1}{k_2}$ with $k_1,k_2>0$, or is a cyclic theta of the form $\cyclr{n-2}{\le}$, and in these cases $\eps_\G$ has dimension $n$;
	\item \label{it:fforms4} $(\G,\delta)$ is a cyclic theta of the form $\cyclr{n-2}{=}$, and in this case $\eps_\G$ has dimension $n-1$.
	\end{enumerate}
\end{lemma}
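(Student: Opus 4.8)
The plan is to read off the cells of $F_{2,n}$ directly from the CW structure on $\dtn$ built in Proposition~\ref{p:cw}. Since $F_{2,n}$ is the union of the closures of the cells $\eps_\G$ with $\G \in \Tfull_{2,n}$, and since every full theta type has no pair of parallel edges (hence trivial automorphism group by Lemma~\ref{c:auts}, and only the trivial decoration), every cell of $F_{2,n}$ other than $\eps_{\mathrm{br}}$ comes from a face of some simplex $\D(\G)$ with $\G$ a full theta, i.e.\ from a contraction $\G/S$ with $S \subseteq E(G)$. By Lemma~\ref{l:thetasleft}, either $\G/S \in \Tbr_{2,n}$, in which case that face maps to $\eps_{\mathrm{br}}$, or $\G/S \in \TTh_{2,n}$, in which case it maps onto $\eps_{\G/S}$ if $\G/S$ has no parallel edges and onto the closure of $\eps_{\G/S,\le}$ (which also contains $\eps_{\G/S,=}$) if the contraction has produced a pair of parallel edges --- exactly as analyzed in the proof of Proposition~\ref{p:cw}. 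So the task reduces to classifying, up to isomorphism, the types $\G/S \in \TTh_{2,n}$ obtained by contracting edges of a full theta, while noting whenever parallel edges appear; the cell dimensions then follow from Lemma~\ref{l:faces}\eqref{it:dimdec}.

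First I would determine which subsets $S$ keep $\G/S$ out of the bridge locus. Picture a full theta $\G$ concretely as its two $3$-valent vertices $u,v$ joined by three internally disjoint ``branches'', the $i$-th having $k_i \ge 1$ interior vertices, which are $2$-valent and each carry a single marking, together with $\eps_1,\eps_2 \in \{0,1\}$ markings at $u$ and $v$. I claim $\G/S \in \TTh_{2,n}$ if and only if each connected component of $S$ is a single edge incident to $u$ (allowed only when $\eps_1 = 0$) or to $v$ (allowed only when $\eps_2 = 0$), with at most one such edge at $u$ and at most one at $v$, and, when both selected edges lie on one branch, with that branch having $k_i \ge 2$. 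This is where Lemma~\ref{l:br} is used: $\G/S \in \Tbr_{2,n}$ precisely when $\G/S$ has a virtual cut vertex or a repeated marking. If $S$ contains all the edges of some branch, contracting that branch turns the underlying type into type III of Figure~\ref{f:genus2}, producing a virtual cut vertex; otherwise $S$ is a forest, and since the only unmarked vertices of $\G$ are among $u$ and $v$, any component with two marking-bearing vertices forces a repeated marking --- leaving exactly the listed options. The converse is a direct check.

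Next I would translate each admissible $S$ into a form. Contracting an edge of branch $i$ at $u$ absorbs one marking onto $u$, hence increments $\eps_1$ and drops $k_i$ by one, and symmetrically at $v$; thus $\G/S$ is again a theta type with vertex markings $\eps_1',\eps_2' \in \{0,1\}$ and branch counts $k_i$ decreased by one for each endpoint at which $S$ contracts an edge of branch $i$. Three situations occur. If all three counts stay $\ge 1$ then $\G/S$ is a full theta, necessarily of form $(0,0,\ast)$, $(1,0,\ast)$, or $(1,1,\ast)$ since $\eps_1',\eps_2' \le 1$, giving the full-theta parts of cases \eqref{it:fforms1}--\eqref{it:fforms3}. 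If exactly one count drops to $0$, that branch becomes the unmarked chord of a cyclic theta, and since its (at least one) marking was absorbed into $u$ and $v$ one has $\eps_1'+\eps_2' \ge 1$, while the two surviving branches keep $\ge 1$ marking each; so the form is $\cycl{k}{k'}$ with $k,k'>0$ or $\cyclr{k}{k'}$ with $k,k'>0$, giving the cyclic parts of cases \eqref{it:fforms2} and \eqref{it:fforms3}. If exactly two counts drop to $0$, those branches become parallel single edges, which forces both to have had $k_i = 1$ with one marking absorbed into each of $u,v$, so $\eps_1'=\eps_2'=1$ and the remaining branch carries all $n-2$ markings; the closure of that cell then contributes $\eps_{\cyclr{n-2}{\le}}$ (case \eqref{it:fforms3}) and $\eps_{\cyclr{n-2}{=}}$ (case \eqref{it:fforms4}). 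Three counts cannot drop to $0$ at once, since $S$ meets at most two branch-endpoints. Finally I would note that each listed form is realized: the full thetas by $S = \emptyset$; $\cycl{k_1}{k_2}$ with $k_1,k_2>0$ from the full theta with branch counts $\{k_1,k_2,1\}$ by contracting the edge at $u$ on the branch with one interior vertex; $\cyclr{k_1}{k_2}$ with $k_1,k_2>0$ from branch counts $\{k_1,k_2,2\}$ by contracting both end edges of the branch with two interior vertices; and $\cyclr{n-2}{\le}$, $\cyclr{n-2}{=}$ from the full theta $(0,0,n-2,1,1)$ by contracting the $u$-end edge of one branch with a single interior vertex and the $v$-end edge of the other.

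The step I expect to be the main obstacle is the first one: cleanly establishing the exact list of admissible contractions. This means playing the two failure modes of Lemma~\ref{l:br} off each other --- virtual cut vertices appear the instant a whole branch is contracted, repeated markings the instant two marked vertices are merged --- against the rigid structure of subdivided theta graphs. Once that list is in hand, reading off the forms and dimensions via Lemma~\ref{l:faces} is routine combinatorics.
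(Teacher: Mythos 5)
Your proposal is correct and follows essentially the same route as the paper: both classify the cells of $F_{2,n}$ as contractions of full theta types, using Lemma~\ref{l:br}/Lemma~\ref{l:thetasleft} to decide when a contraction falls into the bridge locus, and read off dimensions from Lemma~\ref{l:faces}. The paper phrases the key step as a criterion on the target type (the number of unmarked edges of $\Theta$ must not exceed the number of marked vertices) and leaves the case analysis implicit, whereas you enumerate the admissible contraction sets $S$ on the source full theta; this is the same argument carried out explicitly from the other end, and your details (including the treatment of the parallel-edge decorations $\le$ and $=$) check out.
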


\begin{proof}
The cells in $F_{2,n}$, apart from the unique $0$-cell $\zcell$, are indexed by decorated types $\eps_{\G,\delta}$ where $\G\in \TTh_{2,n}$ is obtained from a full theta by contraction (see Lemma~\ref{l:thetasleft}).  Once again, regard $\G$ as obtained from the unmarked graph $\Theta$ by adding markings.  Then $\G\in \TTh_{2,n}$ is a contraction of a full theta type if and only if the number of edges of $\Theta$ whose interiors remain unmarked does not exceed the number of vertices of $\Theta$ that are marked.  For example, a cyclic type of the form $\cycl{n-1}{\le}$ fails the criterion above; the point is that even if the single marking on the 3-valent vertex is moved onto an edge, the resulting type is still not full.

The lemma follows from the observation above by a straightforward case analysis.  The dimensions were computed in Lemma~\ref{l:faces}.

\end{proof}

\begin{prop}\label{p:f}
For all $i\le n$, we have $$\Ht_i(F_{2,n}/(F_{2,n}\cap C_{2,n});\ZZ) = 0.$$
\end{prop}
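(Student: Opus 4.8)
The plan is to analyze the quotient complex $F_{2,n}/(F_{2,n}\cap C_{2,n})$ directly via its cellular chain complex, using the cell classification in Lemma~\ref{l:fforms} together with the boundary-map analysis from the proof of Proposition~\ref{p:cw}. The quotient has a single $0$-cell (the image of $\zcell$, which absorbs everything in $C_{2,n}$) and then cells $\eps_{\G,\delta}$ for exactly those decorated types that are \emph{genuinely full} (not cyclic), in dimensions $n$, $n+1$, and $n+2$. So after passing to the quotient, the reduced cellular chain complex is concentrated in degrees $0$ and $n,n+1,n+2$, and the claim $\Ht_i=0$ for $i\le n$ amounts to showing: (a) the complex is connected (trivial, only one $0$-cell); (b) there is no homology in degree $n$, i.e.\ the boundary map $\del_{n+1}^{\mathrm{full}}\col C_{n+1}\to C_n$ restricted to full-theta cells is surjective onto $\ker(\del_n^{\mathrm{full}})$ — and in fact I expect $\del_n^{\mathrm{full}}$ itself to be injective, so that $H_n=0$ reduces to $\del_n^{\mathrm{full}}$ having trivial kernel. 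Note that degrees between $1$ and $n-1$ are automatically zero since there are no cells there.

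The concrete first step is to enumerate the $n$-dimensional cells of the quotient: by Lemma~\ref{l:fforms}\eqref{it:fforms3} these are precisely the full thetas of form $(1,1,k_1,k_2,k_3)$ with $k_1+k_2+k_3 = n-2$ and $k_1\ge k_2\ge k_3\ge 1$ (the cyclic forms $\cyclr{k_1}{k_2}$, $\cyclr{n-2}{\le}$, $\cyclr{n-2}{=}$ all die in the quotient). Then I would compute the boundary map $\del_n$ on such a cell: its facets, by Lemma~\ref{l:faces}, are obtained by setting some $l(e)=0$, i.e.\ by contracting an edge $e$. Contracting an edge of a type of form $(1,1,k_1,k_2,k_3)$ either lands in the bridge locus (when it creates a positive-weight vertex or merges the two trivalent vertices inappropriately) — contributing $0$ in the quotient — or yields another full theta of one lower dimension, or yields a cyclic theta which is also killed in the quotient. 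The surviving contributions come from contracting an edge \emph{strictly interior to one of the three edge-arcs} of $\Theta$, merging two adjacent markings along that arc; this never changes $(\eps_1,\eps_2)=(1,1)$ and keeps the type full as long as no arc becomes empty. So $\del_n^{\mathrm{full}}$ is (up to sign) a "merge two consecutive labels on one of three arcs" operator, and I want to show it is injective.

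The cleanest way to get injectivity of $\del_n^{\mathrm{full}}$ — and more importantly the vanishing $H_n=0$, which needs surjectivity of $\del_{n+1}^{\mathrm{full}}$ onto the $n$-cycles — is to build an explicit combinatorial deformation retraction or a discrete Morse matching on $F_{2,n}/(F_{2,n}\cap C_{2,n})$, pairing cells in adjacent dimensions so that no critical cell survives below dimension $n+1$. A natural matching: given a full-theta cell, look at the three arcs of $\Theta$ and at whether the two trivalent vertices carry markings; there is a canonical "expanding" move analogous to the one used in the proof of Theorem~\ref{t:bridge} — push a marking from a trivalent vertex onto the adjacent arc (a facet inclusion raising $\eps$-count by $-1$ hence lowering dimension... ) — which I would set up to match every cell of dimension $\le n$ with a unique cell one dimension higher. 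Concretely: a cell of form $(1,1,k_1,k_2,k_3)$ (dimension $n$) is matched with the cell of form $(1,0,k_1,k_2,k_3+1)$ obtained by sliding the marking off the second trivalent vertex onto the shortest arc, unless that move is "blocked," in which case one matches downward instead; one checks acyclicity of the resulting matching and that the only critical cells live in dimensions $n+1$ and $n+2$. This is the same philosophy — shrink a sublocus — that organizes the whole paper.

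The main obstacle, as usual with discrete Morse theory arguments of this flavor, is verifying acyclicity of the matching and, relatedly, handling the boundary cases where the naive "slide a marking off a trivalent vertex" move is not available (e.g.\ when sliding would empty an arc, pushing the type into the cyclic locus) — these exceptional cells need a secondary rule, and one must confirm the combined matching has no directed cycles. A safe fallback, if the Morse matching turns out to be awkward to state cleanly, is the purely homological route: use Lemma~\ref{l:dims}-style rank counts for the full-theta cells in each dimension together with the already-known Euler characteristic (Theorem~\ref{t:2}) and the already-computed homology of $C_{2,n}$ (Theorem~\ref{t:cyclic}) to pin down the rational Betti numbers, and separately run the integral Smith-normal-form computation on $\del_n^{\mathrm{full}}$ exactly as was done for $\del_n$ in Section~\ref{s:cyclic} — the structure there ($\del_n$ restricted to one family being an identity block) strongly suggests the analogous full-theta boundary map also contains a unimodular block large enough to kill all homology in degrees $\le n$. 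I would present the discrete-Morse version as primary since it also yields the $n$-connectivity statement of Theorem~\ref{t:1} most transparently.
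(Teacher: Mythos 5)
Your reduction to degree $n$ is correct (the only positive-dimensional cells of the quotient live in dimensions $n$, $n+1$, $n+2$), but the route you then propose for $H_n=0$ rests on a misreading of the boundary map $\del_n$. In the quotient $F_{2,n}/(F_{2,n}\cap C_{2,n})$ there are no $(n-1)$-cells at all (the only $(n-1)$-cells of $F_{2,n}$ are the cyclic types $\cyclr{n-2}{=}$, which are collapsed), so $\del_n$ is the \emph{zero} map and every $n$-chain is a cycle; your hoped-for injectivity of $\del_n$ is false for all $n\ge 5$, where $n$-cells exist. Indeed the ``merge two consecutive markings along an arc'' contractions you describe as the surviving contributions do not survive: merging two marked points produces a repeated marking, hence a type in $\Tbr_{2,n}$, hence the $0$-cell. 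Consequently the entire content of the proposition is that $\del_{n+1}$ surjects onto the \emph{whole} chain group $C_n$ over $\ZZ$, and this is the step your proposal does not actually establish. The discrete Morse matching you sketch (match $(1,1,k_1,k_2,k_3)$ upward to $(1,0,k_1,k_2,k_3+1)$) is a plausible skeleton, but the points you defer --- well-definedness under ties among the $k_i$, injectivity of the matching into the $(n+1)$-cells, the secondary rule for blocked cells, and above all acyclicity --- are exactly where such arguments fail, and there is the additional issue that the quotient is far from a regular CW complex, so one must check that all matched faces are regular. The fallback via Euler characteristics and Smith normal form of $\del_n$ is vacuous for the same reason: $\del_n=0$.

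For comparison, the paper proves surjectivity of $\del_{n+1}$ onto $C_n$ by a short downward induction on $k_1$: for an $n$-cell $\G$ of form $(1,1,k_1,k_2,k_3)$ one takes the $(n+1)$-cell $\G'$ of form $(1,0,k_1+1,k_2,k_3)$ obtained by sliding a vertex marking onto the longest arc, and computes $\del_{n+1}(\eps_{\G'})=\pm\eps_\G\pm\eps_{\G_2}\pm\eps_{\G_3}$ where $\G_2,\G_3$ have strictly larger first part; the base case $k_1=n-4$ gives $\del_{n+1}(\eps_{\G'})=\pm\eps_\G$ on the nose because the other contractions land in the cyclic or bridge loci. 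If you want to salvage your approach, I would recommend replacing the Morse matching by this induction, or at minimum carrying out the acyclicity verification you have postponed.
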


\begin{proof}
Write $\del_i$ for the boundary maps of the cellular chain complex for $F_{2,n}/(F_{2,n}\cap C_{2,n})$.  From Lemma~\ref{l:fforms}\eqref{it:fforms4}, we see that $F_{2,n}/(F_{2,n}\cap C_{2,n})$ has no cells of dimension less than $n$.  So we are reduced to proving the statement for $i=n$.  Furthermore, by Lemma~\ref{l:fforms}\eqref{it:fforms3}, the $n$-cells of $F_{2,n}/(F_{2,n}\cap C_{2,n})$ correspond to all types obtained from $\Theta$ by marking both vertices once and marking the three edges with $k_1\ge k_2\ge k_3 \ge 1$ markings.  Then  $k_1+k_2+k_3 = n-2$ and $k_1 \le n-4$.  Now suppose $\G$ is of the form we just described.  
To show the Proposition for $i=n$, 
we want to show that $\eps_\G \in \im \del_{n+1}$.  
We induct downward on $k_1$.  Suppose $k_1 = n-4$. Then $k_2=k_3=1$.  Consider the type $\G'$ obtained from $\G$ by moving either of the two marked points on the vertices of $\Theta$ to the interior of the $k_1$-marked edge of $\Theta$.  We claim that $\del_{n+1}(\eps_{\G'}) = \pm \eps_{\G}$. This is because starting with $\G'$ and moving the single marking on the interior of either once-marked edge of $\Theta$ to the unmarked  vertex of $\Theta$ produces a cyclic type and hence no contribution to the boundary map.  Of course, any other edge contraction produces a repeating type and again no contribution to the boundary map.

Next, suppose inductively we have already shown that $\eps_{\mathbf H} \in \im \del_{n+1}$ for any type ${\mathbf H}$ of the form $(1,1,m_1,m_2,m_3)$ such that $m_1\ge m_2 \ge m_3 \ge 1$ and $m_1 > k_1$.  Again, we want to show $\eps_\G \in \im \del_{n+1}$.  Consider the type $\G'$ gotten by moving either one of the marked points on the 3-valent vertices of $\G$ to the interior of the first edge of $\Theta$, i.e.~the one supporting $k_1$ markings.  Then $\G'$ has the form $(1,0,k_1+1, k_2, k_3)$.  Then 
$$\del_{n+1} (\eps_{\G'}) = \pm \eps_\G \pm \eps_{\G_2} \pm \eps_{\G_3}$$
for types $\G_2$ and $\G_3$ of the forms $(1,1,k_1+1, k_2-1, k_3)$ and $(1,1,k_1+1, k_2, k_3-1)$.  But by the inductive hypothesis, both $\eps_{\G_2}, \eps_{\G_3} \in \im \del_{n+1}$, so $\eps_{\G} \in \im \del_{n+1}$ as well.
\end{proof}

Now we prove Theorems~\ref{t:1} and~\ref{t:3}.  Recall that a path-connected space is called $k$-connected if its homotopy groups $\pi_i$ vanish for all $1\le i \le k$.
 
\begin{proof}[Proof of Theorem~\ref{t:1}]  From~\eqref{eq:overZ}, Proposition~\ref{p:f}, and the fact that $\mtno$ and $\dtn$ have the same homotopy type (Observation~\ref{o:hausdorff}), it follows that the reduced $\ZZ$-homology of $\mtno$ also vanishes in degrees up to $n$.  Furthermore $\mtno$ is clearly 1-connected, since it is homotopy equivalent to a CW complex $\dtn$ with one 0-cell and no 1-cells.  Hence the vanishing of $\ZZ$-homology in degree up to $n$ implies that $\mtno$ is $n$-connected \cite[VII.10.9]{bredon}.
\end{proof}

\begin{proof}[Proof of Theorem~\ref{t:3}]
This is a computational result.  We carried out the computations in {\texttt sage} \cite{sage} and the code will be made available.  
If $n\le 3$, the theorem follows from a direct computation in sage.  The spaces $\mtno$ for $n\le 3$ are small enough that no reductions are necessary.  
For $n\ge 4$, the computations rely on the identification of $\QQ$-homology~\eqref{eq:overQ} which is computationally very significant.  We start by building the top boundary matrix $\del_{n+2}$ for the cellular homology of $F/(F\cap C)$.  The rows and columns of $\del_{n+2}$ are indexed by full theta types of dimensions $n+1$ and $n+2$, respectively. Then we use sage to compute the rank of $\del_{n+2}$.  The computation is carried out over $\QQ$ as opposed to $\ZZ$ because it is much faster.  The data of the rank of $\del_{n+2}$, along with the counts of the full theta types of codimension 0, 1, and 2, give the results in Table~\ref{table:upto8}.  The counts of the full theta types are explained in the next section, where they are used to prove a formula for the Euler characteristic of $\mtno$.
\end{proof}

\section{The Euler characteristic of $\mtno$}\label{s:euler}

In the final section of this paper we prove a simple formula for the Euler characteristic of $\mtno$. This calculation is independent of Section~\ref{s:mainproof}.

\begin{prop}
\
\begin{enumerate}
	\item If $n=0, n=1,$ or $n=3$, we have $\chi(\mtno) = 1.$
	\item If $n=2$ then we have $\chi(\mtno) = 2.$
\end{enumerate}
\end{prop}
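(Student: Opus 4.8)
The plan is to treat $n=0,1,2,3$ by direct inspection, since the spaces $\mtno$ are tiny in this range, and to present two arguments, leading with the shorter one. The short argument reads the answer off the main theorems. By Theorem~\ref{t:1}, for each such $n$ the reduced homology $\widetilde H_*(\mtno;\QQ)$ is supported in degrees $n+1$ and $n+2$, so
\[
\chi(\mtno)\;=\;1+(-1)^{n+1}\dim_\QQ H_{n+1}(\mtno;\QQ)+(-1)^{n+2}\dim_\QQ H_{n+2}(\mtno;\QQ).
\]
Reading the two relevant entries from Table~\ref{table:upto8}: for $n=0,1,3$ both dimensions vanish, giving $\chi=1$; for $n=2$ one has $\dim H_3=0$ and $\dim H_4=1$, giving $\chi=1+(-1)^4=2$. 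Only the rational Betti numbers enter, so possible $2$-torsion is irrelevant. (The uniform formula $(-1)^n n!/12$ of Theorem~\ref{t:2} is not even an integer for $n\le 3$, so these cases genuinely must be recorded separately.)

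For a verification independent of Section~\ref{s:mainproof}, I would also include a self-contained computation. By Theorem~\ref{t:bridge} the bridge locus $\btno$ is contractible, and $(\mtno,\btno)$ is a CW pair (e.g.\ after barycentric subdivision), so $\chi(\mtno)=\chi(\mtno/\btno)$. By Lemma~\ref{l:thetasleft} the only cells of $\mtno/\btno$ besides the collapsed point arise from theta types and their contractions; for $n\le 3$ this is a short explicit list (for instance, when $n=3$ the only full theta is the type with one marking in the interior of each edge of $\Theta$, while every cyclic theta either has a marked vertex or has all its markings on two of the three edges). One lists these types together with their automorphism groups and admissible decorations, assembles the resulting finite cell complex, and computes its Euler characteristic directly; this reproduces $\chi=1$ for $n=0,1,3$ (in particular the known contractibility of $M_{2,0}^{\mathrm{trop}}[1]$) and $\chi=2$ for $n=2$.

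The one place needing care in the self-contained route is the automorphism bookkeeping for the most symmetric small-$n$ theta types: whereas Lemma~\ref{c:auts} caps $\Aut(\G)$ at a single transposition of parallel edges once $n\ge 4$, for $n\le 3$ it can be as large as $S_3$ or $S_3\times\ZZ/2$, and this controls precisely which faces of the simplices $\D(\G)$ get identified, hence the cell count. This is a finite case check with no conceptual obstacle; in the write-up I expect the one-line deduction from Theorems~\ref{t:1} and~\ref{t:3} to be the argument of record.
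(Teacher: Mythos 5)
Your first argument is exactly the paper's proof: the paper disposes of this proposition in one line by reading the Betti numbers off Table~\ref{table:upto8} (together with the vanishing from Theorem~\ref{t:1} in lower degrees), and your arithmetic matches. The additional self-contained cell count is a fine sanity check but is not needed; the table lookup is the argument of record.
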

\begin{proof} This follows immediately from Theorem~\ref{t:3}.
\end{proof}

\begin{thm}\label{t:euler}
 For all $n\ge 4$, we have
	$$\chi(\mtno) = 1 + (-1)^n\cdot n!/12.$$
\end{thm}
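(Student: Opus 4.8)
The plan is to compute $\chi(\mtno)$ as an alternating cell count. Since $\mtno$ is homotopy equivalent to the finite CW complex $\dtn$ (Observation~\ref{o:hausdorff}) with the cell structure of Proposition~\ref{p:cw}, we have
\[
\chi(\mtno)\;=\;\chi(\dtn)\;=\;1+\sum_{(\G,\delta)\in\DTT_{2,n}}(-1)^{\dim\eps_{\G,\delta}},
\]
the leading $1$ being the contribution of the $0$-cell $\zcell$. By Lemma~\ref{l:thetasleft} every type in $\TTh_{2,n}$ is either a cyclic theta or a full theta, so this sum splits as a sum over decorated cyclic theta types plus a sum over full theta types (a full theta has no parallel edges, hence by Lemma~\ref{c:auts} is automorphism-free for $n\ge 4$ and carries only the trivial decoration). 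The decorated cyclic theta types are exactly the cells of $C_{2,n}$ other than $\zcell$, so their signed count equals $\widetilde\chi(C_{2,n})$; and $\widetilde\chi(C_{2,n})=0$, because by Theorem~\ref{t:cyclic} the reduced homology of $C_{2,n}$ is entirely $2$-torsion, so all of its rational Betti numbers vanish. Hence
\[
\chi(\mtno)\;=\;1+\sum_{\G\ \mathrm{full\ theta}}(-1)^{\dim\eps_{\G}}.
\]

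Next I would record the dimensions of the remaining cells. A full theta type $\G$ of form $(\eps_1,\eps_2,k_1,k_2,k_3)$ --- obtained from $\Theta$ by placing $\eps_i\in\{0,1\}$ markings on the two vertices and $k_i\ge 1$ markings in the interiors of the three edges, with $\eps_1+\eps_2+k_1+k_2+k_3=n$ --- has subdivided graph with $(k_1{+}1)+(k_2{+}1)+(k_3{+}1)=n+3-\eps_1-\eps_2$ edges, so $\eps_\G$ has dimension $n+2-\eps_1-\eps_2$ by Lemma~\ref{l:faces}\eqref{it:dimdec}. Therefore
\[
\sum_{\G\ \mathrm{full\ theta}}(-1)^{\dim\eps_{\G}}\;=\;(-1)^{n}\!\!\sum_{\G\ \mathrm{full\ theta}}\!\!(-1)^{\eps_1(\G)+\eps_2(\G)}\;=:\;(-1)^n N,
\]
and it remains only to show $N=n!/12$.

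To evaluate $N$ I would count \emph{ordered marked full thetas}: a labeling of the three edges and two vertices of $\Theta$, together with a placement of $\{1,\dots,n\}$ into five slots --- the two vertices, each receiving at most one marking (theta types are nonrepeating), and the three edge interiors, each receiving a nonempty linearly ordered list of markings, read in the direction fixed by the vertex labeling. Since a full theta type is automorphism-free for $n\ge 4$ (Lemma~\ref{c:auts}), each one arises from exactly $3!\cdot 2!=12$ ordered marked full thetas, all with the same value $\eps_1+\eps_2=\#\{\text{markings on vertices}\}$; so the signed count $\sum(-1)^{\#\{\text{vertex markings}\}}$ over all ordered marked full thetas equals $12N$. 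That signed count is read off from exponential generating functions: each vertex slot contributes $1-x$ (empty, or one marking counted with sign $-1$) and each edge slot contributes $\sum_{k\ge 1}k!\,\tfrac{x^k}{k!}=\tfrac{x}{1-x}$, so the total EGF is
\[
(1-x)^2\left(\frac{x}{1-x}\right)^3\;=\;\frac{x^3}{1-x}\;=\;\sum_{n\ge 3}x^n .
\]
Hence the signed count equals $n!$ for every $n\ge 3$, giving $N=n!/12$ and $\chi(\mtno)=1+(-1)^n\, n!/12$.

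Everything up to and including the reduction to computing $N$ is routine, using only results already established. The part requiring care --- and the main obstacle --- is the last paragraph: one must verify that the $\Aut(\Theta)$-bookkeeping really is a clean division by $12$ (which uses both Lemma~\ref{c:auts} and the absence of parallel edges in a full theta, so that there are no decorations and no spurious identifications among the twelve labelings), and that the "nonrepeating" and "full" constraints translate into exactly the generating-function factors $1-x$ and $x/(1-x)$. I expect this combinatorial bookkeeping, rather than any conceptual difficulty, to be the crux.
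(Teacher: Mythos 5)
Your proof is correct, and its skeleton is the same as the paper's: pass to the cell structure on $\dtn$, observe that the cyclic theta cells contribute nothing to the Euler characteristic, and then count the full theta cells with sign. Within that skeleton you make two choices that differ from the paper. First, to kill the cyclic contribution you invoke Theorem~\ref{t:cyclic} (all reduced homology of $C_{2,n}$ is $2$-torsion, so $\widetilde\chi(C_{2,n})=0$), whereas the paper only uses the raw cell counts of Lemma~\ref{l:dims} and checks the alternating sum vanishes directly --- your route is cleaner to state but leans on a much harder result than is needed. Second, and more substantially, your evaluation of the signed full-theta count is genuinely different: the paper counts isomorphism classes partition by partition, correcting by $\alpha(k_1,k_2,k_3)!$, packages this into the function $f(n)$ of~\eqref{eq:fdef}, and then proves $f(n)=\tfrac{1}{12}n^2-\tfrac14 n+\tfrac16$ by a residue-class case analysis (Claim~\ref{c:1/12}). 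You instead rigidify once and for all --- using that full theta types are automorphism-free (Lemma~\ref{c:auts} plus the absence of parallel edges) so that $\Aut(\Theta)$ of order $12$ acts freely on labeled placements --- and evaluate the signed count of rigidified objects by the exponential generating function identity $(1-x)^2\bigl(\tfrac{x}{1-x}\bigr)^3=\tfrac{x^3}{1-x}$. This buys a one-line closed form ($n!$ for all $n\ge 3$) and entirely avoids the partition bookkeeping and the quadratic formula for $f$; the paper's version, in exchange, is more elementary and produces the individual counts of full theta types in each codimension, which it reuses in the computations behind Theorem~\ref{t:3}.
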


\begin{proof}
From Observation~\ref{o:hausdorff}, we may compute $\chi(\dtn)$ instead.  This complex has a zero cell $\zcell$, cells corresponding to full thetas types, and cells corresponding to cyclic theta types. From Lemma~\ref{l:dims}, we see that the total signed contribution of the cells corresponding to cyclic theta types is zero.  The cell $\zcell$ contributes 1 to $\chi(\dtn)$, so we are left to count, with sign, the number of full theta types with $n$ marked points.

First, given a partition $p\vdash m$, let $\alpha(p)$ denote the largest number of parts in $p$ of the same size.  For example, if $p=(4,3,3)\vdash 10$, then $\alpha(p) =2$.  We also fix the following notation: we write $\Theta(k_1,k_2,k_3)$ for the graph obtained from $\Theta$ by adding $k_i$ internal vertices to each edge $e_i$ of $\Theta$ for $i=1,2,3$.
In the notation of Section~\ref{s:mainproof}, it is of type $(0,0,k_1,k_2,k_3)$.  

Let us start by counting the top-dimensional full theta types.  By Lemma~\ref{l:fforms}, these are all full theta types of the form $(0,0,k_1,k_2,k_3)$  with $k_1\ge k_2\ge k_3 > 0$.  Let us fix one such choice of $k_1,k_2,$ and $k_3$ and count the  full theta types $\G$ of that form.  There are $$\frac{n!}{2\cdot \alpha(k_1,k_2,k_3)!}$$ such types, because there are then $n!$ ways to assign the markings, at which point we have overcounted by a factor of $|\Aut(\Theta(k_1,k_2,k_3))| = 2\alpha(k_1,k_2,k_3)!$. 
Thus, letting
\begin{equation}\label{eq:fdef}
f(n) = \sum_{(k_1,k_2,k_3)\vdash n} \frac{1}{\alpha(k_1,k_2,k_3)!},
\end{equation}
we have just shown $n!\cdot f(n)/2$ counts the number of full theta types of top dimension, i.e.~dimension $n+2$.

Next, the full theta types in codimension 1 are of the form $(1,0,k_1,k_2,k_3)$ for $k_1+k_2+k_3 = n-1$ (Lemma~\ref{l:fforms}).  By an analogous argument to the one above, there are $n!/\alpha(k_1,k_2,k_3)!$ full theta types of the form $(1,0,k_1,k_2,k_3)$.  (The factor of 2 disappears because exactly one of the two vertices of $\Theta$ is now marked, so they can no longer be exchanged.)  Thus the number of full theta types of codimension 1 is $n! \cdot f(n-1).$

Finally, an analogous argument shows that the number of full theta types of codimension 1 is $n!\cdot f(n-2)/2.$  Adding, we have
$$\chi(\dtn) = 1+ (-1)^n (n!) \left(\frac{f(n)}{2} - f(n\!-\!1) + \frac{f(n-2)}{2}\right).$$
Then the theorem follows from the following claim:
\begin{claim}\label{c:1/12}
Consider the function $f\colon \ZZ\ra \QQ$ defined in \eqref{eq:fdef}. For each $n \ge 4$, we have
$$\frac{f(n)}{2} - f(n\!-\!1) + \frac{f(n-2)}{2} = \frac{1}{12}.$$
\end{claim}
\begin{proof}[Proof of Claim~\ref{c:1/12}]
We  calculate $f$ as follows.  Consider the largest part $k_1$ of a partition $(k_1,k_2,k_3)\vdash n$.  The possible values of $k_1$ are $$\lceil \tfrac{n}{3}\rceil,\ldots,n\!-\!2.$$
First, if $k_1\ge \lceil \tfrac{n}{2} \rceil$, then the possible partitions with $k_1$ as largest part are
$$(k_1, n\!-\!k_1\!-\!1, 1),\quad (k_1, n\!-\!k_1\!-\!2,2),\quad \cdots \quad(k_1, \lceil \tfrac{n-k_1}{2} \rceil, \lfloor \tfrac{n-k_1}{2} \rfloor).$$
The assumption $k\ge \lceil \tfrac{n}{2} \rceil$ implies that $k_1 > k_2$ in each case.  Furthermore, the last partition has $k_2 = k_3$ if and only if $n-k_1$ is even.  
It follows that for each $k_1 = \lceil \tfrac{n}{2} \rceil, \ldots, n-2$, 
the partitions with largest part $k_1$ contribute $\tfrac{n-k_1-1}{2}$ to $f(n)$.

Next, if $\lceil \tfrac{n}{3} \rceil \le k_1 \le \lceil \tfrac{n}{2}\rceil \!-\! 1$, then the partitions $(k_1,k_2,k_3)\vdash n$ with $k_1$ as largest part are
\begin{equation}\label{eq:k1_2}
(k_1, k_1,n\!-\!2k_1), \quad \ldots,\quad (k_1, \lceil \tfrac{n\!-\!k_1}{2}\rceil, \lfloor \tfrac{n\!-\!k_1}{2}\rfloor).
\end{equation}
Of these, the first one has $k_1 = k_2$.  The last one has $k_2=k_3$ if and only if $n\!-\!k_1$ is even.  If these two partitions are different, then the total contribution to $f(n)$ of the partitions in~\eqref{eq:k1_2} is $\tfrac{3k_1-n}{2}$.   If they are the same, we must have $k_1 = n/3$ in which case the list in~\eqref{eq:k1_2} consists solely of the partition $(k_1,k_1,k_1)$ and it is counted with weight $1/6$.  

Putting together the two cases we just analyzed, we have
\begin{equation}\label{eq:f1}
f(n)=\delta \,\,+\,\,  \sum_{k_1 = \lceil n/3 \rceil}^{\lceil n/2\rceil \!-\! 1}\!\!\frac{3k_1-n}{2} \,\,+\,\, \sum_{k_1 = \lceil n/2\rceil}^{n\!-\!2} \!\!\frac{n-k_1 -1}{2}
\end{equation}
where $$\delta = \begin{cases} 1/6 &\text{ if } 3\,|\,n,\\0&\text{ otherwise.}\end{cases}$$

From this, we can verify directly that
\begin{equation}\label{eq:f2}
f(n) = \frac{1}{12}n^2 -\frac{1}{4}n + \frac{1}{6}
\end{equation}
e.g.~by checking, for each residue class modulo 6, that expressions~\eqref{eq:f1} agrees with~\eqref{eq:f2}.  Then Claim~\ref{c:1/12} follows immediately from~\eqref{eq:f2}.  This finishes the proof of Theorem~\ref{t:euler}.
\end{proof}

\end{proof}

\bigskip

\end{document}